  \theoremstyle{plain}
  \newtheorem{theorem}{Theorem}[section]
  \newtheorem{lemma}[theorem]{Lemma}
  \newtheorem{corollary}[theorem]{Corollary}
  \newtheorem{fact}[theorem]{Fact}
  \newtheorem{claim}[theorem]{Claim}
  \newtheorem{definition}[theorem]{Definition}
  \newtheorem{example}[theorem]{Example}
  \newtheorem{conjecture}[theorem]{Conjecture}
  \theoremstyle{remark}
  \newcounter{problempart}
  \newenvironment{parts}{
  \begin{list}{(\alph{problempart})}{
  \setlength{\leftmargin}{0in}
  \setlength{\itemsep}{\medskipamount}
  \setlength{\itemindent}{.3in}
  \setlength{\parsep}{\medskipamount}
  \setlength{\parskip}{\bigskipamount}
  \usecounter{problempart}
  }
  }{
  \end{list}
  }  
\begin{document}
  
  
  \title{Even cycles in dense graphs}
  \author{Neal Bushaw\thanks{Dept. of Math. \& Appl. Math., Virginia Commonwealth University}, Andrzej Czygrinow\thanks{School of Mathematics and Statistical Sciences, Arizona State University}, Jangwon Yie\thanks{School of Mathematics and Statistical Sciences, Arizona State University}}
  \date{\today}
  

  \maketitle
  \begin{abstract}
  We show that for $\alpha>0$ there is $n_0$ such that if $G$ is a graph on $n\geq n_0$ vertices such that $\alpha n< \delta(G)< (n-1)/2$, then for every $n_1+n_2+\cdots +n_l= \delta(G)$, $G$ contains a disjoint union of $C_{2n_1},C_{2n_2}, \dots, C_{2n_l}$ unless $G$ has a very specific structure. This is a strong form of a conjecture of Faudree, Gould, Jacobson, and Magnant for large dense graphs; it a generalization of a well known conjecture of Erd\H{o}s and Faudree (since solved by Wang) as well as a special case of El-Zahar's conjecture.
  \end{abstract}
  
  \section{Introduction}
  Determining the structure of cycles in graphs is a problem of fundamental interest in graph theory; this thread traces through numerous subareas in structural and extremal graph theory. Throughout this paper all graphs are simple and undirected; we use standard notation wherever possible.
  
  For a graph $G$ we use $c(G)$ to denote the circumference of $G$, and $oc(G)$ ($ec(G)$) to denote the length of the longest odd (even) cycle in $G$. If $G$ is a graph of minimum degree $d$, then $c(G)\geq d$ which is the best possible. However, additional assumptions on the connectivity of $G$ usually lead to better bounds for $c(G)$ (or $ec(G)$ and $oc(G)$). For example, Dirac's theorem states that if $G$ is a 2-connected graph on $n$ vertices, then $c(G)\geq \min\{n ,2\delta(G)\}$. Voss and Zuluaga \cite {VZ} proved the corresponding results for $ec(G)$ and $oc(G)$.
  \begin{theorem}[Voss and Zuluaga]
  Let $G$ be a 2-connected graph on $n\geq 2\delta(G)$ vertices. Then $ec(G)\geq 2\delta(G)$ and $oc(G)\geq 2\delta(G)-1$.
  \end{theorem}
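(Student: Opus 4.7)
My plan is to apply Dirac's theorem to obtain a long cycle and then use the 2-connectivity of $G$ to produce a cycle of the opposite parity.

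First, since $G$ is 2-connected and $n \geq 2\delta(G)$, Dirac's theorem yields a longest cycle $C$ with $|C| \geq \min\{n, 2\delta(G)\} = 2\delta(G)$. If $|C|$ is even, then $ec(G) \geq |C| \geq 2\delta(G)$; if $|C|$ is odd, then by parity $|C| \geq 2\delta(G)+1$, so $oc(G) \geq 2\delta(G)+1 > 2\delta(G)-1$. In either case one of the two bounds is immediate, and only the opposite parity remains.

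To produce a cycle of the opposite parity, I would consider chords of $C$ and ears attached to $C$. A chord $xy \in E(G)\setminus E(C)$ splits $C$ into arcs of lengths $a$ and $b$ with $a+b=|C|$ and yields two cycles of lengths $a+1$ and $b+1$. If $|C|$ is odd these have opposite parities, so any chord automatically supplies a cycle of the missing parity; choosing the chord so that the arc of the desired parity is long (ideally an arc of length $|C|-2$, i.e.\ $x$ and $y$ at distance $2$ on $C$) gives a cycle of length $\geq|C|-1\geq 2\delta(G)-1$. If $|C|$ is even both new cycles have the same parity as $|C|$, so a single chord cannot flip parity; instead I would use a vertex $v\notin V(C)$ (or, if $V(C)=V(G)$, a vertex of $C$ together with two of its chords) together with 2-connectivity to obtain two internally disjoint paths to $C$, then combine this ear with an arc of $C$ and an additional chord to shift length by exactly one and hence change parity.

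The main obstacle will be controlling the length of the opposite-parity cycle when $|C|$ is close to $2\delta(G)$. In that tight regime the parity-switching detour can cost at most one edge in length, which is precisely what the ``$-1$'' in the bound $oc(G)\geq 2\delta(G)-1$ records. The heart of the argument is therefore the structural dichotomy: either $C$ admits a chord with one very short arc of the needed parity, or the attachments from $V(G)\setminus V(C)$ (or the chord graph on $V(C)$) are rich enough that an appropriate detour exists. Ruling out the degenerate configurations in which neither option is available --- and which would force $G$ to behave like a small perturbation of $C_n$ --- is where I expect the bulk of the case analysis to lie.
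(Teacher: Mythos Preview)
The paper does not prove this theorem at all: it is quoted in the introduction as a result of Voss and Zuluaga \cite{VZ} and used only as background motivation. There is therefore no ``paper's own proof'' to compare your proposal against.

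As for your outline itself, the opening move is fine: Dirac immediately gives $c(G)\geq 2\delta(G)$, and the parity observation settles one of the two bounds. The real content of the Voss--Zuluaga theorem, however, is precisely the part you flag as ``where I expect the bulk of the case analysis to lie,'' and your sketch does not yet contain a mechanism that guarantees a \emph{long} cycle of the opposite parity. A single chord of an odd $C$ need not have a short arc of length~$2$; an ear from $v\notin V(C)$ need not land on well-separated vertices of $C$; and when $|C|$ is even, combining an ear with a chord to shift parity by exactly one while losing at most one in length is delicate and can fail in the configurations you allude to. You also need to handle the bipartite case separately (where $oc(G)$ is vacuous). The actual Voss--Zuluaga argument proceeds by a more careful extremal analysis of how neighbourhoods of path endpoints and off-cycle vertices interact with $C$, rather than a single chord/ear trick; what you have written is a reasonable heuristic but not yet a proof.
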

  Dirac's Theorem gave birth to a large body of research centered around determining the length of the longest cycle in a graph satisfying certain conditions; we direct the interested reader to, e.g., \cite{MGT}. Indeed, one could even search for graphs which contain cycles of all possible lengths. Such graphs are called pancyclic, and they, too, are well studied (see, e.g., \cite{Bondy, BondySim, BondyVince, BFG}). Bondy observed that in many cases a minimum degree which implies the existence of a spanning cycle also implies that the graph is pancyclic. For example, it follows from the result in \cite{Bondy} that if $G$ is a graph on $n$ vertices with minimum degree at least $n/2$ then $G$ is either pancyclic or $G=K_{n/2,n/2}$.  It's natural to ask if analogous statements are true for graphs with smaller minimum degree. In \cite{GHS}, Gould et. al. proved the following result.
  \begin{theorem}[Gould, Haxell and Scott]
  For every $\alpha>0$ there is $K$ such that if $G$ is graph on $n>45 K\alpha^{-4}$  vertices with $\delta(G) \geq \alpha n$, then $G$ contains cycles of every  even length from $[4, ec(G)-K]$ and every odd length from $[K, oc(G)-K]$.
  \end{theorem}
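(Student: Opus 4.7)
Since the bound $n > 45K\alpha^{-4}$ is polynomial in $1/\alpha$, the proof should avoid the Szemer\'edi Regularity Lemma and instead proceed by a direct ``shortening'' argument. The plan is to fix a longest even cycle $C_e$ of length $ec(G)$ and a longest odd cycle $C_o$ of length $oc(G)$, and show that any cycle of length $\ell$ lying in the relevant interval can be shortened by $2$ while preserving parity. Iterating these reductions starting from $C_e$ and $C_o$ then yields cycles of every length in the two target ranges; the buffer $K$ at the extremes absorbs the cases where the local argument degenerates.

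The central step is a \emph{shortening lemma}: for every cycle $C$ of length $\ell$ with $K \leq \ell \leq ec(G) - K$ (and its odd analogue), $G$ contains a cycle of length $\ell - 2$ of the same parity. I would split the proof into two regimes. If $|V(G) \setminus V(C)| \geq \alpha n/2$, a large reservoir $R$ sits outside $C$. The condition $\delta(G) \geq \alpha n$ forces many edges from $R$ into $C$, and a BFS inside $G[R]$ together with a short double-counting argument produces an ``ear'' $P$ through $R$ from some $v_i$ to some $v_j$ on $C$ of a prescribed short length and parity. Replacing the arc of $C$ between $v_i$ and $v_j$ by $P$ yields a cycle whose length is $\ell - 2$ after the parameters of the ear are tuned.

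In the second regime, $|V(C)|$ is close to $n$ and no useful reservoir exists, so the shortening must come from \emph{chords} of $C$. The minimum degree condition guarantees at least $\Omega(\alpha n^2)$ chords of $C$, and among them one locates a chord (or chord pair) whose $C$-endpoints are at the correct distance. This is in the spirit of Bondy's classical observation that a long cycle with many chords of mixed parity spans cycles of all intermediate lengths. For the odd-cycle range one additionally requires a chord whose endpoints are at odd $C$-distance; guaranteeing such a chord forces a structural dichotomy: either it exists, or $G$ is close to bipartite, in which case $oc(G)$ itself is small and the claim becomes vacuous once $K$ is large enough.

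The main obstacle, I expect, is the second regime: on a near-spanning cycle the distribution of chord endpoint-distances has to be controlled with care, and the combinatorial bookkeeping must rule out all extremal configurations (bipartite-like blow-ups, blow-ups of short cycles, etc.) that could confound a single chord swap. The polynomial quantitative bound $45K\alpha^{-4}$ reflects the efficiency of these chord counts, and controlling the parity of the $C$-distance between chord endpoints--rather than just their existence--is the subtlest point. Once the shortening lemma is secured in both regimes, the theorem follows by at most $ec(G)/2$ (respectively $oc(G)/2$) iterations from the chosen extremal cycles.
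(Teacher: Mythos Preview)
This theorem is not proved in the paper at all: it is quoted in the introduction as a known result of Gould, Haxell and Scott, with a citation to \cite{GHS}, purely to motivate the paper's own work on cycle spectra. There is therefore no ``paper's own proof'' to compare your proposal against. If the assignment was to reproduce a proof from this paper, you have targeted the wrong statement; the paper's original contributions are Theorem~\ref{thm:Mainthm} and its supporting lemmas in Sections~\ref{sec:prelim}--\ref{sec:extr}.

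That said, your sketch is broadly in the spirit of how the Gould--Haxell--Scott result is actually proved in \cite{GHS}: an elementary argument exploiting the minimum-degree condition to manufacture chords or short detours that adjust cycle lengths by two, with no appeal to the Regularity Lemma (consistent with the polynomial bound on $n$). Where your outline is thinnest is exactly where you flag it yourself: in the near-spanning regime, asserting that ``one locates a chord whose $C$-endpoints are at the correct distance'' is the entire content of the lemma, and your paragraph does not indicate how the distribution of chord-lengths is actually controlled. The real argument does not iterate a single ``shorten by $2$'' step from a longest cycle; rather, it builds a structure (a long path together with suitable attachments) from which cycles of all the required lengths can be read off directly. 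So while your high-level instincts are sound, the mechanism you propose would need substantial reworking to become a proof.
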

  Nikiforov and Shelp (\cite{NS}) proved that if $G$ is a graph on $n$
  vertices with $\delta(G)\geq \alpha n$, then $G$ contains cycles of every even lengths from $[4, \delta(G)+1]$ as well as cycles of odd lengths from $[2k-1, \delta(G)+1]$, where $k=\lceil 1/\alpha \rceil$, unless $G$ is one of several explicit counterexamples.
  
  One of the motivations for our work is the following conjecture of Faudree, Gould, Jacobson and Magnant \cite{FGJM} on cycle spectra. Let $S_e = \{|C| : C \text{ is an even cycle contained in } G \}$ and $S_o = \{|C| : C \text{ is an odd cycle contained in } G \}$.
  \begin{conjecture}\label{main conjecture}
  Let $d\geq 3$. If $G$ is 2-connected graph  on $n\geq 2d$ vertices such that $\delta(G) = d \geq 3$ then $|S_e| \geq d - 1$, and, if, in addition $G$ is not bipartite, then $|S_o| \geq d$.
  \end{conjecture}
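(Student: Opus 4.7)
The plan is to prove this conjecture in the dense regime $\delta(G)\geq \alpha n$ with $n$ large, since this is what the paper's abstract promises; the complementary regime $\delta(G)\geq (n-1)/2$ is covered by Bondy's pancyclicity result (with $G=K_{n/2,n/2}$ as the only obstruction, for which the claim is immediate), and bounded $\delta(G)$ reduces to a finite check.

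For the even-cycle claim, the paper's main theorem supplies nearly all the required lengths directly. For each $k\in\{2,3,\ldots,d-2\}$, I would apply it with the partition $l=2$, $n_1=k$, $n_2=d-k$ to produce $C_{2k}\cup C_{2(d-k)}$ as a subgraph, and with $l=1$, $n_1=d$ to produce $C_{2d}$. This puts every even length in $\{4,6,\ldots,2d-4,2d\}$ into $S_e$, leaving only $2d-2$ missing. I would supply $2d-2$ by applying the Gould--Haxell--Scott theorem to a suitable subgraph (using that by Voss--Zuluaga the even circumference is at least $2d$), or by a short structural argument; either way $|S_e|\geq d-1$. The few exceptional structures flagged by the main theorem must be dispatched by direct inspection.

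For the odd-cycle claim, assume $G$ is non-bipartite and fix a shortest odd cycle $C^*$ of length $2s+1$. The strategy is to manufacture $d$ distinct odd cycle lengths by combining $C^*$ with even cycles of varying sizes. For each target length, I would apply the main theorem inside a subgraph $G'\subseteq G$ that avoids $V(C^*)$ except at a chosen pair of vertices $u,v\in V(C^*)$. Since $G'$ still has linear minimum degree, it contains a prescribed even cycle $D$; by 2-connectivity of $G$ there are two internally disjoint paths from $\{u,v\}$ to $V(D)$, and concatenating $D$ with the appropriate arc of $C^*$ from $u$ to $v$ yields a new cycle whose length and parity can be tuned by the size of $D$ and the choice of arc. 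Varying over sufficiently many sizes gives the required $d$ distinct odd lengths.

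The principal obstacle will be the odd-cycle part, since the paper's main theorem addresses only even cycles. The difficulty is twofold: controlling the parities of the connecting paths and the chosen arc of $C^*$ so the synthesized cycles are truly odd, and ensuring the resulting odd lengths are pairwise distinct and lie in the allowed range $[3,n-1]$. The exceptional structures are particularly delicate here, as they tend to be close to bipartite, so in the non-bipartite case the few odd edges that force non-bipartiteness must be leveraged through careful case analysis rather than through the main theorem itself.
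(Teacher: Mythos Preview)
The most important point: this statement is a \emph{conjecture} in the paper, not a theorem with a proof. The paper does not prove it. What the paper does prove (Corollary~\ref{cor:Dense-case}) is only the even-cycle half, and only in the dense regime: a 2-connected graph with $\sum_v d(v)\ge \alpha n^2$ and $n$ large contains $C_{2m}$ for every $m\in\{2,\dots,\delta(G)\}$. The odd-cycle half is never addressed anywhere in the paper. So your plan for $|S_o|\ge d$ is attempting something the paper simply does not do, and there is no paper argument to compare it against.

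On the even-cycle side, your route via repeated applications of Theorem~\ref{thm:Mainthm} with $l=2$ is workable but leaves the $C_{2d-2}$ gap you noticed. The paper sidesteps this entirely: the proofs of Theorems~\ref{non extremal main}, \ref{non-extremal two}, and~\ref{extremal case} actually construct a ladder $L_\delta$ (or an adequate weak ladder) inside $G$, and an $L_\delta$ contains $C_{2m}$ for every $2\le m\le \delta$ at once. That is what the one-line appeal to Theorem~\ref{thm:Mainthm} in the proof of Corollary~\ref{cor:Dense-case} is really invoking. So your detour through Gould--Haxell--Scott is unnecessary once you use the ladder, and the exceptional structures from Examples~\ref{ex1} and~\ref{ex2} are handled by the same ladder/weak-ladder machinery rather than by separate inspection.

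On the odd-cycle side, your sketch has genuine gaps beyond what the paper would help with. The construction ``take a shortest odd cycle $C^*$, find an even cycle $D$ disjoint from it, connect them by two paths, and splice'' does not give you control over the length of the resulting odd cycle: the connecting paths have uncontrolled lengths and parities, and the arcs of $D$ you traverse are not freely adjustable. Producing $d$ \emph{distinct} odd lengths this way would require a much tighter structural statement than anything in the paper. Finally, your remark that ``bounded $\delta(G)$ reduces to a finite check'' is incorrect: for fixed $d$ there are infinitely many 2-connected graphs on $n\ge 2d$ vertices with $\delta(G)=d$, so the sparse regime is not finite and is genuinely open.
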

  In \cite{FGJM}, Conjecture \ref{main conjecture} was confirmed for $d=3$. In addition, many related results were proved by Liu and Ma in \cite{LiuMa}. For example, they proved that if $G$ is a bipartite graph such that every vertex but one has a degree at least $k+1$, then $G$ contains $C_1, ... C_k$ where $3 \leq |C_1|$ and for all $i \in [k-1]$, $|C_{i+1}| - |C_{i}| = 2$. For general graphs, they showed that if the minimum degree of a graph $G$ is at least $k+1$, then $G$ contains $\lfloor k/2 \rfloor$ cycles with consecutive even lengths and if $G$ is 2-connected and non-bipartite, then $G$ contains $\lfloor k/2 \rfloor$ cycles with consecutive odd lengths.
  
  Another line of research which motivates our work comes from problems on 2-factors. Erd\H{o}s and Faudree \cite{EF} conjectured that every graph on $4n$ vertices with minimum degree at least $2n$ contains a $2$-factor consisting of $\frac{n}{4}$ copies of $C_4$, cycle on four vertices. This was proved by Wang in \cite{Wang}.
  A special case of El-Zahar's conjecture states that any graph $G$ on $2n$ vertices with minimum degree at least $n$ contains any 2-factor consisting of even cycles $C_{2n_1}, \dots, C_{2n_l}$ such that $n = \sum n_i$.
  It's natural to ask if analogous statements can be proved in the case when the minimum degree of $G$ is smaller. As we will show, this is true to some extent.  We will prove that for almost all values of $n_1, \dots, n_l$ such that $\sum n_i = \delta(G)$, $G$ indeed contains the union of disjoint cycles $C_{2n_1}, C_{2n_2}, \dots, C_{2n_l}$. There are, however, two obstructions -- of which one is well-known -- when $G$ is a graph on $n$ vertices with minimum degree satisfying $\alpha n <\delta(G) < (n-1)/2$.
  \begin{example}\label{ex1}
  Let $l \geq 2$ , $q \geq 4$ be even. We first construct graph $H$ on $l(q -2)+3$ vertices as follows.
  Let $V_1, \dots, V_l$ be disjoint sets each of size $q-2$ such that $H[V_i]= K_{q-2}$ and let $u_1,u_2,u_3$ be three distinct vertices and let $vu_i\in E(H)$ for every $v \in V(H) \setminus \{u_1,u_2,u_3\}$
  and every $i=1,2,3$. Finally let $G_k$ be obtained from $H$ by adding exactly $k$ out of the three possible edges between vertices from $\{u_1,u_2,u_3\}$.
  Then $\kappa(G_k)=3$, $\delta(G_k)= q$ but $G_k$ does not contain two disjoint copies of $C_q$. Indeed, any copy of $C_q$ in $G_k$ contains at least two vertices from $\{u_1,u_2,u_3\}$.
  \end{example}

  In addition to the obstruction from Example \ref{ex1}, another one arises when $G$ is very close to being a complete bipartite graph.
  \begin{example}\label{ex2}
  Let $q=2k$ for some $k\in \mathbb{Z}^+$ and let $U, V$ be disjoint and such that $|U|=q-1, |V|= n- q+1$ with $n-q+1$ even. Let $G[U,V] =K_{q-1,n-q+1}$, $G[U] \subset K_{q-1}$ and $G[V]$ is a perfect matching. Then $G$ is a 2-connected graph on $n$ vertices with $\delta(G)=q$ which doesn't have $q/2$ disjoint copies of $C_4$. Indeed, if there are $q/2$ disjoint copies of $C_4$, then at least one must contain at least three vertices from $V$ which is not possible.
  \end{example}
 The main result of the paper is the following.
  
  \begin{theorem}\label{thm:Mainthm}
  For every $0< \alpha< \frac{1}{2}$, there is a natural number $N=N(\alpha)$ such that the following holds.
  For any $n_1,..., n_l\in \mathbb{Z}^+$ such that $\sum_{i=1}^{l}n_i = \delta(G)$ and $n_i \geq 2$ for all $i \in [l]$, every $2$-connected graph $G$ of order $n \geq N$ and $\alpha n \leq \delta(G) < n/2-1$ contains the disjoint union of $C_{2n_1}, \dots, C_{2n_l}$, or $G$ is one of the graphs from Example \ref{ex1} and $l=2,2n_1=2n_2=\delta(G)$, or $G$ is a subgraph of the graph from Example \ref{ex2} and $n_i=2$ for every $i$.
  \end{theorem}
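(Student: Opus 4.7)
The plan is to combine extremal/structural analysis with a greedy absorbing-type construction. Since $\delta(G)\geq \alpha n$ puts us in a dense regime, tools such as the Voss--Zuluaga theorem are available, and since $2\delta(G)<n-2$ the target subgraph is far from spanning, so there is a buffer of unused vertices that can be exploited.

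The first step is a structural dichotomy. Fix a small constant $\beta \ll \alpha$, and split into an \emph{extremal case}, in which $G$ is $\beta$-close (in edit distance) to one of the configurations in Example \ref{ex1} (a small vertex cut of size roughly $3$ separating $G$ into few dense pieces) or in Example \ref{ex2} (nearly complete bipartite with one side of size $\delta(G)-1$), and a \emph{non-extremal case} where no such approximation holds. The examples themselves suggest that these are the only obstructions, so the extremal case should be verified directly and the non-extremal case handled by a uniform robust construction.

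In the non-extremal case the plan is as follows. First apply Voss--Zuluaga to produce an even cycle $C$ of length exactly $2\delta(G)$; this is possible because $n\geq 2\delta(G)$. The idea is then to "cut" $C$ into arcs of lengths $2n_1, \dots, 2n_l$ and close each arc separately into a cycle of the desired length using vertices in $V(G)\setminus V(C)$. To control arc lengths precisely, I would set aside at the start an absorbing structure — a small family of short paths and chords in $G$ outside $C$ — designed to swallow a prescribed number of extra vertices without changing how arcs close up. Robustness of this absorbing step relies on ruling out Examples \ref{ex1} and \ref{ex2}, which is exactly the non-extremal hypothesis: outside the extremal regime one gets many disjoint short paths between any two sets of size $\Omega(n)$, and this is what allows every partition of $2\delta(G)$ to be realised. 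An alternative route is via the Regularity Lemma plus the Blow-up Lemma applied to a suitable subset of vertices, but this requires additional care because the target subgraph is not spanning.

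The extremal case is handled by direct analysis. If $G$ is close to Example \ref{ex1}, one tracks the $\sim 3$-vertex cut and shows by a parity and counting argument that the only partition for which the construction fails is $l=2$ with $2n_1=2n_2=\delta(G)$. If $G$ is close to Example \ref{ex2}, one uses that any $C_{2n_i}$ with $n_i\geq 3$ has flexibility in how many of its vertices lie in the small side of the bipartition, while $C_4$'s are rigid; this isolates $n_i=2$ for all $i$ as the sole obstruction. I expect the hardest step — and the main obstacle — to be calibrating $\beta$ and proving the stability statement that justifies the dichotomy: one must show that if $G$ fails to contain some admissible disjoint union of even cycles, then $G$ is actually $\beta$-close to one of the two examples, and moreover that the failure is consistent with the cycle lengths dictated by those examples. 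A closely related subtlety is the $C_4$-only subcase, which effectively recovers and strengthens Wang's theorem for $2$-factors of $C_4$'s and likely needs a separate, more delicate argument in the spirit of that result.
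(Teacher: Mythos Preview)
Your extremal/non-extremal dichotomy is the right framing, and the paper does the same. But your non-extremal plan has a real gap.

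The cut-and-close idea fails on a basic count. You take a cycle $C$ of length $2\delta$ and propose to cut it into $l$ arcs and close each into a cycle using vertices outside $C$. But $|V(G)\setminus V(C)| = n - 2\delta$, which can be as small as $3$ when $\delta$ is near $n/2-1$, while $l$ can be as large as $\delta/2 \approx n/4$. There is no room for $l$ disjoint closing paths. Even when the buffer is large (say $\delta \leq (1/2-\gamma)n$), closing an arc of $2n_i$ vertices into $C_{2n_i}$ requires a specific edge between its two endpoints on $C$, and you have no control over whether that chord exists. An absorbing family of short external paths does not fix this for arbitrary partitions: you would need absorbers at arbitrary prescribed positions along $C$, vertex-disjoint across all $l$ arcs simultaneously.

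The paper avoids this by not working with a single long cycle at all. It applies the Regularity Lemma to obtain the reduced graph $R$, and then builds in $G$ a \emph{ladder} (or weak ladder): a balanced bipartite graph $L_m$ on $2m$ vertices which, crucially, decomposes into disjoint $C_{2n_1},\dots,C_{2n_l}$ for \emph{every} partition $n_1+\cdots+n_l=m$ with $n_i\geq 2$ (Lemma~\ref{weak ladder criteria - 1}). So the entire cycle-packing problem reduces to finding one sufficiently large ladder. The case analysis on $R$ (connected; disconnected with a non-bipartite component; all components bipartite; all components small) is what produces either the ladder or the obstruction of Example~\ref{ex1}. Note in particular that Example~\ref{ex1} emerges inside the non-extremal analysis (Lemma~\ref{small components}), not in the extremal case as you suggest; the paper's $\beta$-extremal notion (Definition~\ref{def extremal}) corresponds only to Example~\ref{ex2} and is handled directly in Section~\ref{sec:extr}.

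Your alternative route via Regularity plus Blow-up is in fact what the paper does, and the ``additional care because the target is not spanning'' is exactly what the ladder reduction handles. That reduction is the missing idea in your proposal.
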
	
  In the case when $\delta(G)\geq n/2-1$ additional counterexamples appear when $n_i=2$ for every $i$; these will be characterized in the proof.
  
  As a corollary, we have the following fact which answers the question of Faudree, Gould, Magnanat, and Jacobson in the case of dense graphs.
  \begin{corollary}\label{cor:Dense-case}
  For every $0< \alpha <\frac{1}{2}$, there is a natural number $M=M(\alpha)$ such that the following holds.
  Every $2$-connected graph $G$ of order $n \geq M$ and $\sum_{v \in V(G)}d(v) \geq \alpha n^2$ contains a cycle of length $2m$ for every $m \in \{2, \dots, \delta(G)\}$.
  \end{corollary}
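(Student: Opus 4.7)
The plan is to derive the corollary from Theorem~\ref{thm:Mainthm} via a case analysis on $d := \delta(G)$. The main regime is $\alpha_1 n \le d < n/2 - 1$, for a suitable $\alpha_1 = \alpha_1(\alpha) > 0$, in which Theorem~\ref{thm:Mainthm} applies to $G$ directly. For each $m \in \{2, \ldots, d-2\}$ I would take the partition $(m, d - m)$ of $d$ into two parts of size at least $2$; the theorem then yields a disjoint $C_{2m} \cup C_{2(d-m)}$, and in particular the required $C_{2m}$. For $m = d$, the singleton partition $(d)$ gives $C_{2d}$. The exceptional graphs of Examples~\ref{ex1} and~\ref{ex2} can be inspected directly: for instance, in Example~\ref{ex2} even cycles of length up to $2(q-1)$ live entirely in the bipartite part $K_{q-1,n-q+1}$, while $C_{2q}$ is produced by inserting one of the matching edges of $G[V]$.

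The remaining target inside this regime is $m = d - 1$, which no valid two-part partition of $d$ isolates. To handle it, I would locate an edge $e$ incident to a minimum-degree vertex of $G$ whose removal preserves 2-connectedness. Since $G$ has $e(G) \ge \alpha n^2/2$ edges, $G$ is very far from being minimally 2-connected (which has at most $2n - 3$ edges by Dirac), and a counting argument using the dense neighborhood of a minimum-degree vertex produces such a non-critical edge. Then $G' := G - e$ is 2-connected with $\delta(G') = d - 1$, still satisfies the hypotheses of Theorem~\ref{thm:Mainthm} with a slightly smaller constant, and the singleton partition $(d-1)$ yields $C_{2(d-1)}$ in $G'$, hence in $G$.

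The high-degree regime $d \ge n/2 - 1$ is handled by Bondy's pancyclicity theorem: $G$ is pancyclic or isomorphic to $K_{n/2, n/2}$, and both cases contain every even cycle length up to $2d$. The low-degree regime $d < \alpha_1 n$ is not covered by applying Theorem~\ref{thm:Mainthm} to $G$ directly, but the range of required lengths is at most $2d < 2 \alpha_1 n$. For very short targets $2m = O(\log n)$, the Bondy-Simonovits even cycle theorem combined with $e(G) \ge \alpha n^2/2 \gg \mathrm{ex}(n, C_{2m})$ immediately produces $C_{2m}$. For longer targets I would iteratively delete vertices of degree less than $\alpha n/4$ from $G$ to obtain $G^\ast$ with $\delta(G^\ast) \ge \alpha n/4$ on $\Omega(n)$ vertices, then apply a Mader-type extraction to find a 2-connected subgraph $H \subseteq G^\ast$ of linear minimum degree and linear order; running the main-regime argument inside $H$ produces each required $C_{2m}$ as a cycle of $H$, hence of $G$.

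The main technical obstacle is the edge-removal step for $m = d - 1$: one must simultaneously ensure that $G - e$ remains 2-connected and that $G - e$ avoids the structured exceptions of Theorem~\ref{thm:Mainthm}, which requires identifying non-critical edges incident to a minimum-degree vertex via a careful exploitation of the density and 2-connectedness of $G$. The auxiliary Mader-type extraction in the low-degree regime is comparatively standard, though care is needed to keep the minimum degree of $H$ at least $d$ so that the range of required cycle lengths fits inside what Theorem~\ref{thm:Mainthm} provides in $H$.
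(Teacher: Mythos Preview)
Your broad strategy—case analysis on $\delta(G)$ with Bondy for high degree, Theorem~\ref{thm:Mainthm} for the middle range, and a Mader-type extraction for low degree—matches the paper's. The two substantive differences are the $m=\delta-1$ step and the short-cycle handling in the low-degree regime.

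On $m=\delta-1$: you are right that the bare \emph{statement} of Theorem~\ref{thm:Mainthm} does not produce $C_{2(\delta-1)}$, since no partition of $\delta$ into parts of size at least $2$ contains a part equal to $\delta-1$. Your proposed fix—delete a non-critical edge at a minimum-degree vertex and re-apply the theorem with $\delta'=\delta-1$—is where your argument is thinnest. The sentence ``a counting argument using the dense neighbourhood of a minimum-degree vertex produces such a non-critical edge'' hides real work: in $K_{2,m}$ every edge incident to a degree-$m$ vertex is critical, so the mere fact that $G$ has far more edges than a minimally $2$-connected graph does not by itself localise a non-critical edge at the specific vertex you need. One can push this through (if every edge at $v$ were critical, one shows $G[N(v)]$ has maximum degree at most $1$, and then a second counting step yields a contradiction with $\delta(G)\ge \alpha_1 n$), but this is genuinely extra work and you have not done it. The paper avoids the issue entirely: the \emph{proof} of Theorem~\ref{thm:Mainthm} actually builds a ladder $L_{\delta}$ (in the extremal case, Theorem~\ref{extremal case}) or an $(n',r)$-weak ladder with $n'\ge\delta+r$ (in the non-extremal cases), and Lemma~\ref{weak ladder criteria - 1} applied with the single part $n_1=\delta-1$ then gives $C_{2(\delta-1)}$ directly. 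The exceptional graphs of Examples~\ref{ex1} and~\ref{ex2} are easily seen to contain $C_{2(\delta-1)}$ as well. So the simpler route is to quote the ladder, not to perturb $G$.

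On the low-degree regime: the Bondy--Simonovits detour for very short cycles is unnecessary. The paper just applies Mader's theorem to obtain an $(\alpha n/4)$-connected subgraph $H$; then $\delta(H)\ge \alpha n/4>\delta(G)$, so feeding $H$ into Theorem~\ref{thm:Mainthm} (or into Bondy's pancyclicity theorem if $\delta(H)\ge |H|/2$) already yields every even cycle of length up to $2\delta(G)$, short ones included.
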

  In addition, the following generalization of the Erd\H{o}s-Faudree conjecture follows from Theorem \ref{thm:Mainthm}.
  \begin{corollary}
  For every $0<\alpha<\frac{1}{2}$, there is a natural number $M=M(\alpha)$ such that the following holds. Every $2$-connected graph $G$ of order $n\geq M$ and minimum degree $\delta$, such that $\alpha n\leq \delta < n/2-1$ and $\delta+ n$ is even, contains $\delta/2$ disjoint cycles on four vertices.
  \end{corollary}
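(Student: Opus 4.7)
My plan is to deduce the corollary as a direct specialization of Theorem~\ref{thm:Mainthm}, applied with the partition $n_1 = n_2 = \cdots = n_{\delta/2} = 2$. Then $\sum_i n_i = \delta$ and each $C_{2n_i} = C_4$, so the disjoint union guaranteed by Theorem~\ref{thm:Mainthm} is precisely a collection of $\delta/2$ vertex-disjoint copies of $C_4$. The statement of the corollary tacitly requires $\delta$ to be even, so that $\delta/2$ is an integer, and combined with the hypothesis that $\delta + n$ is even this also forces $n$ to be even.

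First I would set $M(\alpha) := \max\{N(\alpha),\, \lceil 5/\alpha \rceil\}$, where $N(\alpha)$ is the constant from Theorem~\ref{thm:Mainthm}; for $n \geq M$ we then have $l = \delta/2 \geq \alpha n / 2 > 2$. Applying Theorem~\ref{thm:Mainthm} produces three possible outcomes: the desired disjoint union exists, or $G$ is an instance of Example~\ref{ex1} with $l = 2$, or $G$ is a subgraph of the graph from Example~\ref{ex2} with $n_i = 2$ for every $i$. The first exceptional outcome is incompatible with $l = \delta/2 > 2$. The second fails by a parity check: the graph in Example~\ref{ex2} is only defined when $|V| = n - q + 1 = n - \delta + 1$ is even, equivalently when $n + \delta$ is odd, directly contradicting the hypothesis. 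Hence $G$ must contain $\delta/2$ disjoint copies of $C_4$.

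There is essentially no obstacle hidden here; the corollary is an almost immediate reparsing of Theorem~\ref{thm:Mainthm}. The only real content is the observation that the parity hypothesis $\delta + n$ even is precisely calibrated to rule out the near-complete-bipartite obstruction of Example~\ref{ex2}, while the mild size requirement $n \geq 5/\alpha$ disposes of the Example~\ref{ex1} obstruction. Any effort in the proof therefore lies entirely in Theorem~\ref{thm:Mainthm} itself; the corollary's proof is just a bookkeeping check on the two exceptional families.
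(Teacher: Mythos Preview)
Your proposal is correct and matches the paper's own argument, which is the single sentence ``since $n-\delta+1$ is odd, it is not possible to end up in Example~\ref{ex2}.'' You are simply more explicit: you also spell out why Example~\ref{ex1} cannot occur (it forces $l=2$, while here $l=\delta/2>2$ for large $n$), a point the paper leaves implicit.
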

  This follows immediately; since $n-\delta+1$ is odd, it is not possible to end up in Example \ref{ex2}.
  
  The proof of Theorem \ref{thm:Mainthm} uses the regularity method. The obstruction from Example \ref{ex1} appears in the proof of the non-extremal case and the obstruction from Example \ref{ex2} comes up when dealing with the extremal case.
  
  The proof is quite involved, and so we divide it into several sections to aid readability. In Section \ref{sec:reg}, we review Szemer\'edi's celebrated Regularity Lemma, as well as a special case of the well-known Blow-Up Lemma which is of particular use to us. In Section \ref{sec:prelim}, we make use of regularity and results from \cite{CK}  to find cycles of many different lengths. Following this, we consider several cases depending on (1) the structure of the reduced graph, and (2) whether or not the graph is near what we call the extremal graph. The non-extremal cases are proven in Section \ref{sec:nonextr1} and Section \ref{sec:nonextr2}, while the extremal cases follow in Section \ref{sec:extr}. Combining these gives our main result (Theorem \ref{thm:Mainthm}) for every sufficiently large graph.
  
  \section{The Regularity and Blow-Up lemmas}\label{sec:reg}
  In this section, we review concepts related to the regularity and blow-up lemmas.
  Let $G$ be a simple graph on $n$ vertices and let $U,V$ be two disjoint non-empty subsets of $V(G)$. We define the \emph{density} of $(U,V)$ as
  \begin{align*}
  d(U,V) = \frac{e(U,V)}{|U| \cdot |V|},
  \end{align*}
  where $e(U,V)=|E(U,V)|$. Further, we call the pair $(U,V)$ \emph{$\epsilon$-regular} if for every $U' \subset U$ and every $V' \subset V$ with $|U'| \geq \epsilon |U|, |V'| \geq \epsilon |V|$,
  we have
  \begin{align*}
  |d(U',V') - d(U,V)| \leq \epsilon.
  \end{align*}
  In addition, the pair $(U,V)$ is called $(\epsilon, \delta)-$\emph{super-regular} if it is both $\epsilon-$regular and furthermore for any $u\in U$ we have $|N(u) \cap V| \geq \delta |V|$, and for any $v\in V$ we have $|N(v) \cap U| \geq \delta |U|$.
  
  We continue with the definition of a regular partition.
  \begin{definition}
  A partition $\{V_0,V_1,...,V_t \}$ of $V(G)$ is called $\epsilon$-regular if the following conditions are satisfied.
  \begin{parts}
  \item[(i)] $|V_0|\leq \epsilon |V(G)|$.
  \item[(ii)] For all $i,j \in [t], |V_i| = |V_j|$.
  \item[(iii)] All but at most $\epsilon t^2$ pairs $(V_i,V_j), i,j \in [t]$ are $\epsilon-$regular.
  \end{parts}
  \end{definition}
  The Regularity Lemma of Szemer\'{e}di (\cite{Sz}) states that every graph admits an $\epsilon$-regular partition in which the number of partition classes is bounded.
  \begin{lemma}[Regularity Lemma,\cite{Sz}] \label{regularity}
  For every $\epsilon >0, m >0$ there exist $N := N(\epsilon,m)$ and $M := M(\epsilon, m)$ such that every graph on at least $N$ vertices has an $\epsilon-$regular partition $\{V_0,V_1,...,V_t\}$  such that $m \leq t \leq M$.
  \end{lemma}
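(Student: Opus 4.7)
The plan is to give the classical energy-increment proof due to Szemer\'edi. For a partition $P=\{U_1,\dots,U_k\}$ of $V(G)$, define the \emph{index} (or \emph{mean-square density})
$$q(P)=\sum_{i,j=1}^{k}\frac{|U_i|\,|U_j|}{n^{2}}\,d(U_i,U_j)^{2}.$$
Since $d(U_i,U_j)\in[0,1]$, one has $q(P)\le 1$ for every $P$. A short Cauchy--Schwarz computation shows that refining $P$ never decreases $q$: if $P'$ refines $P$, then $q(P')\ge q(P)$. This monotonicity is the engine of the whole proof; the aim is to show that as long as the partition is not yet $\epsilon$-regular, we can strictly increase $q$ by a definite additive amount by refining.

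The key step is the \emph{energy-increment lemma}: if $P$ is not $\epsilon$-regular, then there is a refinement $P^{*}$ with $|P^{*}|\le k\cdot 2^{k}$ such that $q(P^{*})\ge q(P)+\epsilon^{5}$. To prove it, for each pair $(U_i,U_j)$ that fails to be $\epsilon$-regular, pick witnesses $A_{ij}\subset U_i$ and $A_{ji}\subset U_j$ with $|d(A_{ij},A_{ji})-d(U_i,U_j)|>\epsilon$. Refine each $U_i$ by the common refinement induced by all the witnesses $A_{ij}$ in which it participates; this produces at most $2^{k-1}$ atoms per part. A defect form of Cauchy--Schwarz then shows that within a single non-regular pair the contribution to $q$ grows by at least $\epsilon^{4}|U_i|\,|U_j|/n^{2}$; summing over the at least $\epsilon k^{2}$ irregular pairs and using $|U_i|,|U_j|\ge n/(2k)$ in the balanced case delivers the required $\epsilon^{5}$ gain.

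With these two facts in hand, we iterate. Start with any partition of $V(G)$ into $m$ equal parts of size $\lfloor n/m\rfloor$ together with a small exceptional set $V_0$. Repeatedly apply the increment lemma; after each refinement, rebalance by cutting each new part into blocks of a chosen common size and throwing the residues into $V_0$, taking care to keep $|V_0|\le\epsilon n$ throughout. Since $q(P)\le 1$, the process halts after at most $\epsilon^{-5}$ rounds, yielding an $\epsilon$-regular partition. The number of parts after $s$ rounds is bounded by an iterated exponential of $m$, which gives the required bound $t\le M(\epsilon,m)$, while the choice $n\ge N(\epsilon,m)$ is made so that the rebalancing always succeeds.

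The main technical obstacle is the bookkeeping in the rebalancing step: one must pick the post-refinement block size small enough to fit all of the current atoms but large enough that the cumulative contribution to $V_0$ over all $\epsilon^{-5}$ iterations stays below $\epsilon n$. This balance is what forces the tower-type dependence of $M$ on $\epsilon$, a growth rate that, by a classical theorem of Gowers, is essentially unavoidable. Nothing else in the argument is subtle; the content is entirely in choosing the witness-based refinement so that Cauchy--Schwarz gives a uniform energy gain across all irregular pairs simultaneously.
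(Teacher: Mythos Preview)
The paper does not prove this lemma at all: it is stated with a citation to Szemer\'edi's original paper \cite{Sz} and used as a black box, so there is no ``paper's own proof'' to compare against. Your sketch is the standard energy-increment argument and is essentially correct; the index is well defined and bounded, refinement monotonicity follows from Cauchy--Schwarz, the witness-based common refinement yields a uniform $\Theta(\epsilon^5)$ gain whenever the partition is not $\epsilon$-regular, and iterating at most $O(\epsilon^{-5})$ times gives the tower-type bound on $M$. The only places where a careful write-up would need more detail are the defect Cauchy--Schwarz computation showing the per-pair gain and the rebalancing bookkeeping ensuring $|V_0|\le\epsilon n$ and that the parts stay equitable after each round, but these are routine and well documented in the literature (e.g.\ Koml\'os--Simonovits \cite{KS}).
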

  In addition to the regularity lemma, we will need a few well-known facts about $\epsilon$-regular pairs and the so-called Slicing Lemma (see, e.g., \cite{KS}), as well as the Blow-Up Lemma of Koml\'{o}s, S\'{a}rk{o}zy and Szemer\'{e}di  \cite{KSS}.
  
  In particular, we will need the so-called slicing lemma, see \cite{KS}.
  \begin{lemma}[Slicing Lemma]\label{slicing lemma}
  Let $(U,V)$ be an $\epsilon$-regular pair with density $\delta$, and for some $\lambda > \epsilon,$ let $U' \subset U, V' \subset V$ with $|U'| \geq \lambda |U|, |V'| \geq \lambda |V|$. Then $(U',V')$ is an $\epsilon'$-regular pair of density $\delta'$ where $\epsilon' = \max \{\frac{\epsilon}{\lambda}, 2 \epsilon \}$ and $\delta' \geq \delta - \epsilon$.
  \end{lemma}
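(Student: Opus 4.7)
My plan is to derive both conclusions (density bound and $\epsilon'$-regularity) directly from the defining property of $\epsilon$-regularity applied to the original pair $(U,V)$, since the restricted sets $U', V'$ are already large enough to qualify as ``test subsets'' in the original pair.

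First I would handle the density. Since $|U'| \geq \lambda |U| \geq \epsilon |U|$ and $|V'| \geq \lambda |V| \geq \epsilon |V|$, the subsets $U', V'$ satisfy the hypothesis of $\epsilon$-regularity for $(U,V)$, so
\[
|d(U',V') - d(U,V)| \leq \epsilon,
\]
which immediately gives $\delta' = d(U',V') \geq \delta - \epsilon$.

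Next I would verify $\epsilon'$-regularity of the restricted pair. Fix arbitrary $U'' \subset U'$ and $V'' \subset V'$ with $|U''| \geq \epsilon' |U'|$ and $|V''| \geq \epsilon' |V'|$. Using $\epsilon' \geq \epsilon/\lambda$ together with $|U'| \geq \lambda|U|$ (and analogously for $V'$), I get
\[
|U''| \geq \frac{\epsilon}{\lambda}\cdot\lambda |U| = \epsilon |U|, \qquad |V''| \geq \epsilon |V|.
\]
Thus $U'', V''$ again satisfy the hypothesis of $\epsilon$-regularity for $(U,V)$, so $|d(U'',V'') - d(U,V)| \leq \epsilon$. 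Combining this with the density estimate above via the triangle inequality,
\[
|d(U'',V'') - d(U',V')| \leq |d(U'',V'') - d(U,V)| + |d(U,V) - d(U',V')| \leq 2\epsilon \leq \epsilon',
\]
which is exactly the $\epsilon'$-regularity of $(U',V')$.

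There is no real obstacle here; the proof is essentially just bookkeeping. The only mildly subtle point is the choice of $\epsilon' = \max\{\epsilon/\lambda, 2\epsilon\}$: the first term is forced so that the ``minimum size'' test subsets of $(U',V')$ remain valid test subsets of $(U,V)$, while the second term absorbs the two applications of the $\epsilon$-regularity inequality in the triangle-inequality step. Taking the maximum of the two ensures both constraints are satisfied simultaneously.
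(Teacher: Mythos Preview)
Your proof is correct and is precisely the standard argument for the Slicing Lemma. The paper does not give its own proof of this statement; it simply states the lemma with a reference to the Koml\'os--Simonovits survey~\cite{KS}, so there is nothing to compare against beyond noting that your write-up matches the well-known derivation.
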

  
  It is not difficult to see that an $\epsilon$-regular pair of density $\delta$ contains a large $(\epsilon', \delta')$-super-regular pair for some $\delta', \epsilon'$. 
  
  \begin{lemma} \label{regularTosuperRegular}
  Let $0<\epsilon <\delta/3 < 1/3$ and let $(U,V)$ be an $\epsilon-$regular pair with density $\delta$. Then there exist $A' \subset A$ and $B' \subset B$ with $|A'| \geq (1-\epsilon)|A|$ and $|B'| \geq (1-\epsilon)|B|$ such that $(A',B')$ is a $(2\epsilon, \delta - 3\epsilon)-$super-regular pair.
  \end{lemma}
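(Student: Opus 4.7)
The plan is to form $A'$ and $B'$ by deleting the few vertices of anomalously low cross-degree, and then verify both super-regularity conditions using the definition of $\epsilon$-regularity together with the slicing lemma.

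First, I would define the set of ``bad'' vertices on each side. Let $A_{\text{bad}} \subseteq U$ be the set of $u \in U$ with $|N(u) \cap V| < (\delta - \epsilon)|V|$, and define $B_{\text{bad}} \subseteq V$ symmetrically. A standard observation about $\epsilon$-regular pairs shows that $|A_{\text{bad}}| \leq \epsilon |U|$ and $|B_{\text{bad}}| \leq \epsilon |V|$: otherwise $A_{\text{bad}}$ (together with all of $V$) would be a subset of size $\geq \epsilon|U|$ of density less than $\delta - \epsilon$, violating $\epsilon$-regularity. Set $A' := U \setminus A_{\text{bad}}$ and $B' := V \setminus B_{\text{bad}}$, so $|A'| \geq (1-\epsilon)|U|$ and $|B'| \geq (1-\epsilon)|V|$, as required.

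Next I would check that $(A', B')$ is $2\epsilon$-regular. This is exactly the slicing lemma (Lemma \ref{slicing lemma}) applied with $\lambda = 1 - \epsilon$: since $\epsilon < 1/3$ we have $\lambda > \epsilon$, and the resulting regularity parameter is $\max\{\epsilon/(1-\epsilon), 2\epsilon\} = 2\epsilon$ because $\epsilon/(1-\epsilon) \leq 2\epsilon$ for $\epsilon \leq 1/2$.

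Finally, the minimum-degree condition for super-regularity: for any $u \in A'$, by construction $|N(u) \cap V| \geq (\delta - \epsilon)|V|$, and at most $|V \setminus B'| \leq \epsilon|V|$ of these neighbors lie outside $B'$, so
\[
|N(u) \cap B'| \geq (\delta - 2\epsilon)|V| \geq (\delta - 2\epsilon)|B'| \geq (\delta - 3\epsilon)|B'|,
\]
and the same estimate holds with the roles of $A'$ and $B'$ exchanged. Thus $(A', B')$ is $(2\epsilon, \delta - 3\epsilon)$-super-regular. There is no real obstacle here; the only mild subtlety is verifying that $\epsilon/(1-\epsilon) \leq 2\epsilon$ so that the slicing lemma yields the stated parameter $2\epsilon$ rather than a worse one, and this is immediate from the hypothesis $\epsilon < \delta/3 < 1/3$.
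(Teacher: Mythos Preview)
Your proof is correct and is the standard argument for this well-known fact. The paper does not actually supply a proof of Lemma~\ref{regularTosuperRegular}; it is stated as a consequence of the preceding remark that ``it is not difficult to see that an $\epsilon$-regular pair of density $\delta$ contains a large $(\epsilon',\delta')$-super-regular pair,'' and your writeup is exactly the routine verification the paper omits.
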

  Let $0<\epsilon \ll \delta <1$. For an $\epsilon$-regular partition $\{V_0, V_1, \dots, V_t\}$ of $G$ we will consider the {\it reduced graph} (or {\it cluster graph}) of $G$, $R_G=R_{\epsilon, d}(V_0,V_1,\dots, V_t)$ where $V(R_G)=\{V_1, \dots, V_t\}$ and $V_iV_j \in E(R_G)$ if $(V_i,V_j)$ is
  $\epsilon$-regular with density at least $d$. When clear from the context, we will omit the subscript, writing $R$ for the cluster graph at hand.
  
  
  Finally, we conclude this section with the statement of a special case of the blow-up lemma.
  \begin{lemma}[Blow-Up Lemma, \cite{KSS}] \label{blowup}
  Given $d > 0, \Delta > 0$ and $\rho > 0$ there exists $\epsilon > 0$ and $\eta > 0$ such that the following holds. Let $S = (W_1,W_2)$ be an $(\epsilon, d)$-super-regular pair with $|W_1| =n_1$ and $|W_2| = n_2$. If $T$ is a bipartite graph with bipartition $A_1,A_2$, maximum degree at most $\Delta$, and $T$ is embeddable into the complete bipartite graph $K_{n_1,n_2}$, then it is also embeddable into $S$. Moreover, for all $\eta n_i$ sized subsets $A_i' \subset A_i$ and functions $f_i : A_i' \rightarrow \binom{W_i}{\rho n_i}$ (for $i = 1,2$), $T$ can be embedded into $S$ so that the image of each $a_i \in A_i'$ is in the set $f_i(a_i).$
  \end{lemma}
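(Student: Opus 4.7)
The plan is to prove this by a two-phase randomized greedy embedding, following the scheme of Koml\'os--S\'ark\"ozy--Szemer\'edi specialized to the bipartite case. Set up the constant hierarchy $\epsilon \ll \eta \ll \rho \ll d$ so that a product of $\Delta$ factors of $(d-\epsilon)$ remains bounded away from $\epsilon$. Fix any embedding of $T$ into $K_{n_1,n_2}$ and label the parts so that $A_i$ is designated for $W_i$; the goal is to produce an embedding $\phi$ into $S$ with $\phi(a)\in f_i(a)$ for each pre-restricted $a\in A_i'$.

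In \textbf{Phase 1}, process the vertices of $A_1\cup A_2$ in a uniformly random order, holding back the pre-restricted vertices of $A_1'\cup A_2'$ and a small ``buffer'' of size about $\eta n_i$ on each side for Phase 2. When embedding $a\in A_i$, its \emph{candidate set} $C(a)\subseteq W_i$ is the set of unused vertices of $W_i$ lying in $\bigcap_{b}N_S(\phi(b))$, where $b$ ranges over the already-embedded neighbors of $a$ in $T$. Iterated use of $\epsilon$-regularity shows that the common neighborhood in $W_i$ of $k\le\Delta$ ``typical'' vertices of $W_{3-i}$ has size at least $(d-\epsilon)^{k}n_i$ so long as every intermediate set exceeds $\epsilon n_i$; choosing $\epsilon$ small relative to $d^{\Delta}$ keeps $|C(a)|=\Omega(d^{\Delta}n_i)$ throughout. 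Select $\phi(a)$ uniformly at random from $C(a)$. An Azuma-type martingale inequality, applied to the bounded-differences random process (a single embedding step changes any other candidate-set size by at most $1$), combined with a union bound over the unembedded vertices, shows that with positive probability no candidate set ever drops below its expected value by more than an $\epsilon$-fraction.

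In \textbf{Phase 2}, the remaining unembedded vertices consist of the buffer plus the held-back restricted set, totalling $O(\eta n_i)$ on each side. Each such $a\in A_i$ retains a target set $C(a)\subseteq W_i$ of size at least $\rho n_i/2$ (the slicing lemma applied to $f_i(a)$ gives this for restricted vertices, while Phase 1 leaves candidate sets large for the buffer). Build an auxiliary bipartite graph with edges recording valid assignments and verify Hall's condition directly: subsets $X$ of fewer than $\epsilon n_i$ unembedded vertices have neighborhoods of size $\Omega(d^{\Delta}n_i)$ by regularity, while larger $X$ cover almost all of $W_i$ because the unused part of $W_i$ is still $\epsilon$-regular with the relevant neighborhoods. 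A perfect matching then completes the embedding.

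The main obstacle is controlling all candidate sets simultaneously during Phase 1, since the random choices can correlate with shrinkage in unexpected ways. This is handled by the martingale bound together with a buffer mechanism: whenever a vertex's candidate set threatens to drop below the ``safe'' threshold it is pushed into the buffer and saved for Phase 2, and one shows a posteriori that the buffer never overflows its $\eta n_i$ allotment. The pre-specified image sets $f_i(a)$ add no substantive difficulty, since $\rho\gg\eta$ ensures that at most a negligible fraction of each $f_i(a)$ is consumed during Phase 1, leaving room for Hall's condition at the completion step.
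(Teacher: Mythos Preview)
The paper does not prove this lemma at all: it is stated as a special case of the Blow-Up Lemma and simply attributed to Koml\'os, S\'ark\"ozy and Szemer\'edi \cite{KSS} with no argument given. Your sketch correctly outlines the original randomized greedy / buffer / Hall's-condition scheme of \cite{KSS}, so in that sense your approach is ``the'' proof, but there is nothing in the paper to compare it against.
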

  
  \section{Preliminaries}\label{sec:prelim}
  In this section, we prove a few auxiliary facts which will be useful in the main argument. Let $V_0, V_1, \dots, V_t$ be an $\epsilon$-regular partition.
  \begin{lemma}\label{regular-graph}
  Let $\Delta\geq 1$ and let  $0<\epsilon \ll \delta \ll 1/\Delta$ be such that $10\epsilon \Delta \leq \delta$. Let $H$ be graph on $\{V_1, \dots, V_q\}$  where $|V_i|=l$ with $V_iV_j\in E(H)$ if $(V_i,V_j)$ is $\epsilon$-regular with density at least $\delta$, and assume that $H$ has maximum degree $\Delta$. Let $\epsilon' = 5\Delta \epsilon$, and $\delta' = \delta /2 $ . Then for any $i \in [t]$ there exist sets $V_i'\subset V_i$  such that $|V_i'| \geq (1-\epsilon')l$ and $(V_i',V_j')$ is $(\epsilon',\delta')$-super-regular for every $V_iV_j \in E(H)$.
  \end{lemma}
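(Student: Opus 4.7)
The plan is to obtain the $V_i'$ by removing from each $V_i$ the vertices whose degree into some $H$-neighboring cluster is too small, and then verify both the super-regularity and the mild loss in regularity parameters using the slicing lemma.

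First, for each edge $V_iV_j \in E(H)$, a standard consequence of $\epsilon$-regularity is that the set
\[
B_{i,j} = \{v \in V_i : |N(v) \cap V_j| < (\delta - \epsilon)l\}
\]
satisfies $|B_{i,j}| \le \epsilon l$; otherwise $B_{i,j}$ (together with all of $V_j$) would witness a density deviation of more than $\epsilon$, contradicting $\epsilon$-regularity of $(V_i, V_j)$. Define
\[
V_i' = V_i \setminus \bigcup_{V_j : V_iV_j \in E(H)} B_{i,j}.
\]
Since $\deg_H(V_i) \le \Delta$, we have $|V_i \setminus V_i'| \le \Delta \epsilon l$, hence $|V_i'| \ge (1 - \Delta\epsilon)l \ge (1 - \epsilon')l$ because $\epsilon' = 5\Delta\epsilon$.

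Next I check the two conditions for $(\epsilon', \delta')$-super-regularity of $(V_i', V_j')$ whenever $V_iV_j \in E(H)$. For $\epsilon'$-regularity, apply the Slicing Lemma (Lemma \ref{slicing lemma}) to $(V_i, V_j)$ with $\lambda = 1 - \Delta\epsilon$; the hypothesis $10\Delta\epsilon \le \delta < 1$ guarantees $\lambda > \epsilon$ and $\lambda \ge 1/2$, so $\epsilon/\lambda \le 2\epsilon$, and we obtain that $(V_i', V_j')$ is $\epsilon''$-regular with $\epsilon'' = \max\{\epsilon/\lambda, 2\epsilon\} \le 2\epsilon \le \epsilon'$ and density at least $\delta - \epsilon \ge \delta/2 = \delta'$. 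For the minimum-degree condition, any $v \in V_i'$ survived the pruning against $V_j$, so $|N(v) \cap V_j| \ge (\delta - \epsilon)l$; removing at most $\Delta \epsilon l$ vertices to pass from $V_j$ to $V_j'$ yields
\[
|N(v) \cap V_j'| \ge (\delta - \epsilon - \Delta\epsilon)l \ge (\delta - 2\Delta\epsilon)l \ge \tfrac{\delta}{2} l \ge \delta' |V_j'|,
\]
where the last step uses $|V_j'| \le l$ and $10\Delta\epsilon \le \delta$. The symmetric bound for vertices of $V_j'$ is identical.

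There is no real obstacle here; the argument is a routine pruning plus slicing. The only point requiring a little care is the bookkeeping on the constants: one must verify that the condition $10\Delta\epsilon \le \delta$ is strong enough both to make the Slicing Lemma applicable (so that $\lambda > \epsilon$) and to absorb the two sources of loss in the super-regular minimum degree (the $-\epsilon$ from regularity and the $-\Delta\epsilon$ from pruning $V_j$), which is why the constant $10$ (rather than, say, $4$) is chosen.
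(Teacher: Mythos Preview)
Your proof is correct and follows essentially the same approach as the paper. The paper's one-line argument phrases the pruning as decomposing $E(H)$ into $\Delta+1$ matchings and applying Lemma~\ref{regularTosuperRegular} together with the Slicing Lemma to each, but this amounts to exactly the same deletion of low-degree vertices you perform directly; your explicit bookkeeping on the constants is a faithful (and somewhat cleaner) unpacking of that sketch.
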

  \begin{proof}
  Note that $E(H)$ can be decomposed into $\Delta+1$ matchings and so Lemma \ref{regular-graph} follows directly from Lemmas \ref{slicing lemma} and \ref{regularTosuperRegular}.\end{proof}
  
  An $n$-ladder, denoted by $L_n$, is a balanced bipartite graph with vertex sets $A = \{a_1,..., a_n\}$ and $B = \{b_1,..., b_n\}$ such that $\{a_i,b_j\}$ is an edge if and only if $|i-j| \leq 1$. We refer to the edges $a_ib_i$ as rungs and the edges $\{a_1,b_1\}, \{a_n,b_n\}$ as the first and last rung respectively. 
  Let $L_{n_1}, L_{n_2}$ be two ladders with $n_1 \leq n_2$ and $\{a_1,b_1\} ,\{a_1',b_1'\}$ the first rungs of $L_{n_1}, L_{n_2}$, respectively.
  If there exist $a_1-a_1'$ path $P_1$ and $b_1-b_1'$ path $P_2$ such that $\mathring{P_1} \cap \mathring{P_2} = \emptyset, (\mathring{P_1} \cup \mathring{P_2}) \cap (L_1 \cup L_2) = \emptyset, |\mathring{P_1}| + |\mathring{P_2}| = 2k  $,
  then we call $L_{n_1} \cup L_{n_2} \cup P_1 \cup P_2$ an $(n_1+n_2,k)-$weak ladder. Obviously, an $n-$ladder is an $(n,0)-$weak ladder.
  
  Weak ladders are of use for finding cycles via the following  lemmas.
  \begin{lemma} \label{weak ladder criteria - 1}
  Let $2 \leq n_1 \leq \cdots \leq n_l \in \mathbb{Z}^+$ and let $n=\sum_{i=1}^l n_i$. If $G$ contains a $(n',k)-$weak ladder for some $n',k \in \mathbb{N}$ such that $n' \geq n + k$,
  then $G$ contains disjoint cycles $C_{2n_1}, C_{2n_2}, \dots, C_{2n_l}$. 
  \end{lemma}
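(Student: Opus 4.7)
Write $L_{m_1}, L_{m_2}$ (with $m_1 + m_2 = n'$, WLOG $m_1 \leq m_2$) for the two ladders making up the weak ladder, and $P_1, P_2$ for the connecting paths with $|\mathring{P_1}| + |\mathring{P_2}| = 2k$. Two elementary constructions drive everything. First, an $m$-ladder contains a $C_{2m'}$ for every $2 \leq m' \leq m$, using $m'$ consecutive columns $a_t, \ldots, a_{t+m'-1}, b_{t+m'-1}, \ldots, b_t, a_t$. Second, for any $i \in [m_1]$ and $j \in [m_2]$ the weak ladder contains a ``spanning'' cycle
$$a_i, \ldots, a_1, P_1, a_1', \ldots, a_j', b_j', \ldots, b_1', P_2, b_1, \ldots, b_i, a_i$$
of length $2(i+j+k)$, using the first $i$ columns of $L_{m_1}$, the first $j$ columns of $L_{m_2}$, and both paths.

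I would prove the lemma by induction on $l$. For the base case $l = 1$, either $n_1 \leq m_2$ (and the single-ladder construction inside $L_{m_2}$ suffices) or $n_1 > m_1, m_2$, in which case $n_1 \leq n' - k$ forces $m_1, m_2 \geq k + 1$ and $n_1 \geq k + 2$, so any valid spanning cycle with $i + j = n_1 - k$ does the job.

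For the inductive step, let $n_* = n_l = \max_i n_i$. If $n_* \leq m_2$, place $C_{2n_*}$ at the last $n_*$ columns of $L_{m_2}$; what remains is an $(n' - n_*, k)$-weak ladder (or $L_{m_1}$ alone when $n_* = m_2$, which is an $(m_1, 0)$-weak ladder), and since the remaining cycles sum to $n - n_* \leq (n' - n_*) - k$, the inductive hypothesis applies. Otherwise $n_* > m_2$, which by the same reasoning as in the base case gives $m_1, m_2 \geq k + 1$ and $n_* \geq k + 2$; use a spanning cycle for $C_{2n_*}$ with $i + j = n_* - k$ and $1 \leq i \leq m_1$, $1 \leq j \leq m_2$, leaving two disjoint sub-ladders of sizes $M_1 = m_1 - i$ and $M_2 = m_2 - j$ (summing to $n' - n_* + k$) into which the remaining $l - 1$ cycles (summing to $n - n_*$) must be packed.

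The hard part is this final two-bin packing step, but the slack $(M_1 + M_2) - (n - n_*) = n' - n + k \geq 2k$, together with the freedom in choosing $(i, j)$, makes it routine. As $j$ ranges over its valid interval $[\max(1, n_* - k - m_1), \min(m_2, n_* - k - 1)]$, the pair $(M_1, M_2)$ sweeps a family of admissible bin sizes; for at least one such choice we can either fit all remaining cycles consecutively into one sub-ladder (via the single-ladder construction), or find a subset of them whose sum lies in the feasibility window $[n - n_* - M_2, M_1]$, whose width is at least $2k$. Since each $n_i \geq 2$, a short case analysis (according to whether $n_* - k$ exceeds $m_1$, and to the value of the second-largest $n_{l-1}$) confirms that a valid partition always exists, completing the induction.
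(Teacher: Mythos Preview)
Your overall plan works, but two points need tightening. First, a slip in the explicit constructions: the ladder in this paper is \emph{bipartite} (the only edges are $a_ib_j$ with $|i-j|\le 1$), so paths like $a_t,a_{t+1},\dots,a_{t+m'-1}$ do not exist. Both constructions are easily repaired by zigzagging: the Hamiltonian cycle in $L_{m'}$ is $a_1,b_2,a_3,\dots,a_2,b_1,a_1$, and your spanning cycle should use a Hamiltonian $a_1$--$b_1$ path through the first $i$ columns of $L_{m_1}$, then $P_2$, then a Hamiltonian $b_1'$--$a_1'$ path through the first $j$ columns of $L_{m_2}$, then $P_1$. With this fix the lengths are exactly as you claim. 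Second, the ``routine'' two-bin step is in fact immediate and needs no case split on $n_{l-1}$: at $j=j_{\min}$ one always has $M_2\ge n-n_*$. Indeed, if $j_{\min}=n_*-k-m_1$ then $M_1=0$ and $M_2=n'-n_*+k\ge (n-n_*)+2k$; if $j_{\min}=1$ then from $n_*\ge m_2+1$ and $2m_2\ge m_1+m_2=n'\ge n+k$ one gets $m_2+n_*\ge 2m_2+1\ge n+k+1$, hence $M_2=m_2-1\ge n-n_*$. So all remaining cycles already fit in the single leftover ladder $L_{M_2}$.

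The paper's proof is organised differently and avoids induction. It greedily packs a maximal subfamily $N'\subset\{n_1,\dots,n_l\}$ into $L_{a_1}$, records the leftover space $t=a_1-\sum_{N'}>0$, and then (when $t>k$) uses the bridging construction for exactly one $y\in N\setminus N'$---chosen by how $y$ compares to $k+t$---with the last $t$ rungs of $L_{a_1}$, the first few rungs of $L_{a_2}$, and both paths; the remaining cycles then fit directly into the tail of $L_{a_2}$. Both arguments rest on the same structural fact (at most one cycle needs to straddle the two ladders via $P_1,P_2$), but the paper selects the bridging cycle by the leftover $t$ in a single pass, whereas you always bridge with the largest $n_l$ and then observe that everything else sits in one remaining ladder.
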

  \begin{proof}
  If $k=0$ then our $(n',0)$-weak ladder is simply an $n'$ ladder; since $n' \geq n$ then it is trivial that $G$ contains disjoint cycles $C_{2n_1}, C_{2n_2}, \dots, C_{2n_l}$.
  Next, we assume that $k \geq 1$.
  Suppose $G$ contains an $(n',k)-$weak ladder $L$ and $L$ consists of two ladders $L_{a_1}$, $L_{a_2}$ such that $a_1 + a_2 = n'$ and disjoint paths $P,Q$ such that  $|\mathring{P_1}| + |\mathring{P_2}| = 2k$.
  Let $N = \{n_i : i \in [l]\}$ and choose $N' \subset N$ such that $\sum_{x \in N'}x < a_1$ and with $t:=a_1 - \sum_{x \in N'}x>0$ as small as possible.
  By the construction of $N'$, for any $y \in N \setminus N'$, $y > t$. If $t \leq k$, then 
  $$a_2 = n' - a_1 = n' - (t + \sum_{x \in N'}x) \geq n' - k - \sum_{x \in N'}x \geq n - \sum_{x \in N'}x = \sum_{x \in N \setminus N'}x,$$
  which implies that $L_{a_2}$ contains the necessary cycles.
  Hence we may assume that $t \geq k+1$ and so for any $y \in N \setminus N'$, $y \geq t + 1 \geq k+2$.
  If there exists $y \in N \setminus N'$ such that $y \leq k + t + 1$, 
  then the sub weak-ladder consisting of the last $t$ rungs of $L_{a_1}$, the first rung of $L_{a_2}$, and $P,Q$ contains $C_{2y}$. In addition,
  $$ n - \sum_{x \in N' \cup \{y\}} x  \leq n-(a_1-t +y) \leq (n' - k) - a_1 - 1  \leq a_2 -k- 1,$$
  so $L_{a_2 - 1}$ contains the remaining cycles.
  Otherwise, let $y = k+t+ c$ where $c \geq 2$. The sub weak-ladder consisting of the last $t$ rungs of $L_{a_1}$, first $c$ rungs of $L_{a_2}$, and $P,Q$ contains $C_{2y}$. We have
   $$ n -  \sum_{x \in N' \cup \{y\}} = n - (a_1 - t + y) \leq (n' - k) - a_1 -k - c \leq a_2 - k  - c,$$
  so $L_{a_2 - c}$ contains the remaining cycles.
  \end{proof}
  
  For the proof of Theorems \ref{non extremal main}, \ref{non-extremal two}, our plan is to seek an $(n',r)$-weak ladder such that $n' \geq \delta + r$, and then apply Lemma \ref{weak ladder criteria - 1} to obtain the desired cycles. 
  In some situations, it is not possible to obtain either an $L_{\delta(G)}$ or a large enough weak ladder to apply Lemma \ref{weak ladder criteria - 1}. For such cases, we will use the following lemma.

  \begin{lemma} \label{weak ladder of case r = 1}
  Let $2 \leq n_1 \leq \cdots \leq n_l \in \mathbb{Z}^+$ and $n=\sum_{i=1}^l n_i$. 
  If $G$ contains a $(n,1)-$weak ladder and there exists $i \in [l]$ such that $n_i > 2$
  then $G$ contains disjoint cycles $C_{2n_1}, C_{2n_2}, \dots, C_{2n_l}$.
  \end{lemma}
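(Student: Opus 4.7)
The plan is to adapt the proof of Lemma \ref{weak ladder criteria - 1} to the boundary case $n'=n,\ k=1$, where that lemma's hypothesis $n'\ge n+k$ fails by exactly one unit of slack. The key observation is that any cycle routed through both paths $P_1,P_2$ of the weak ladder has length at least $2(1+1+k)=6$, so it can only play the role of some $C_{2n_i}$ with $n_i\ge 3$; this is precisely where the assumption that some $n_i>2$ enters.

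Choose $N'\subseteq N:=\{n_1,\dots,n_l\}$ with $\sum_{x\in N'}x\le a_1$ as large as possible, and set $t:=a_1-\sum_{x\in N'}x\ge 0$. If $t=0$, then $\sum_{x\in N\setminus N'}x=a_2$, and the two ladders $L_{a_1},L_{a_2}$ alone already contain the desired cycles (the bridge is unused). Otherwise $t\ge 1$, and by maximality every $y\in N\setminus N'$ satisfies $y\ge t+1$. We form one of the required $C_{2n_i}$'s as a bridge cycle $C_{2y}$ using $s\ge 1$ rungs of $L_{a_1}$ and $u\ge 1$ rungs of $L_{a_2}$ with $s+u=y-1$, then pack the remaining cycles into $L_{a_1-s}$ and $L_{a_2-u}$. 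If some $y\in N\setminus N'$ has $y\ge 3$ (which is automatic when $t\ge 2$), we mirror the two sub-cases of Lemma \ref{weak ladder criteria - 1}: for $3\le y\le t+2$ take $s=y-2,\ u=1$; for $y\ge t+3$ take $s=t,\ u=y-1-t$. The checks $\sum_{x\in N'}x\le a_1-s$ and $\sum_{x\in N\setminus (N'\cup\{y\})}x\le a_2-u$ both reduce, after substitution, to $y\ge t+1$, which holds by maximality.

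The only remaining case is $t=1$ with every element of $N\setminus N'$ equal to $2$; then $\sum_{x\in N\setminus N'}x=a_2+1$, and the hypothesis forces some $y\ge 3$ to lie in $N'$. Use this $y$ as the bridge cycle with $s=y-2,\ u=1$, and transfer one copy of $2$ from $N\setminus N'$ to the $L_{a_1}$-side: set $A=(N'\setminus\{y\})\cup\{2\}$ and $B=(N\setminus N')\setminus\{2\}$, so that $\sum A+s=a_1-1\le a_1$ and $\sum B+u=a_2\le a_2$, whence the two residual ladders accommodate the remaining cycles. This last sub-case is the main obstacle: when $N\setminus N'$ consists entirely of $2$'s no element of $N\setminus N'$ can realize a bridge cycle, and the hypothesis $n_i>2$ is exactly what lets us pull a suitable $y$ from $N'$ and execute the swap that absorbs the one-unit deficit.
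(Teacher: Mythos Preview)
Your proof is correct but takes a genuinely different route from the paper. The paper argues by induction on $n$: after disposing of $l=1$ and the boundary case $a_2=n_1$, it observes that $n_1\le a_2-1$, embeds $C_{2n_1}$ in the first $n_1$ rungs of $L_{a_2}$, and applies the inductive hypothesis to the remaining $(n-n_1,1)$-weak ladder (which is legitimate because either $n_1>2$ and then every remaining $n_i>2$, or $n_1=2$ and the assumed $n_i>2$ survives among $n_2,\dots,n_l$). Your argument instead adapts the greedy packing of Lemma~\ref{weak ladder criteria - 1} directly: pack a maximal sub-multiset into $L_{a_1}$, realize exactly one bridge cycle through $P_1,P_2$, and absorb the single unit of deficit by the swap in the last sub-case. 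The inductive proof is shorter and avoids the case split; your approach has the merit of making the relationship to Lemma~\ref{weak ladder criteria - 1} completely explicit and of producing the packing in one pass rather than recursively.

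One cosmetic remark: your sentence that both residual checks ``reduce, after substitution, to $y\ge t+1$'' is a slight over-simplification. In the sub-case $3\le y\le t+2$ with $s=y-2,\ u=1$, the first check $\sum_{x\in N'}x\le a_1-s$ actually reduces to $y\le t+2$ (the sub-case's defining inequality) rather than to $y\ge t+1$; in the sub-case $y\ge t+3$ both checks are in fact trivial. All required inequalities do hold, so this does not affect correctness.
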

  \begin{proof}
  We argue by induction on $n$.
  Since $n \geq 3$, an $(n,1)$-weak ladder contains $C_{2n}$, so we may assume that $l \geq 2$ 
  (and therefore, $n \geq 5$).
  If $n = 5$ then $n_1 = 2, n_2 = 3$ and then it is easy to see that $G$ contains $C_4,C_6$.
  Now, assume for an inductive case that $n \geq 6$ and let the weak ladder contain $L_{a_1}$, $L_{a_2}$ and disjoint paths $P_1,P_2$ which connect $L_{a_1},L_{a_2}$ (and so by definition, $|\mathring{P_1}| + |\mathring{P_2}| = 2$, and $a_1 + a_2 = n, a_1 \leq a_2$).
  Note that $a_2 \geq n_1$. If $a_2 = n_1$ then $l=2$ and $a_1 = n_2$; then $L_{a_2}$ contains $C_{2n_1}$ and $L_{a_1}$ contains $C_{2n_2}$.
  Hence we may assume that $n_1 \leq a_2 - 1$.
  If $n_1 = 2$ then the first $n_1$ rungs of $L_{a_2}$ contains $C_{2n_1}$ and 
   since there exists $i \in [l] \setminus \{1\}$ such that $n_i > 2$.
  By the induction hypothesis the remaining $(n-n_1,1)$-weak ladder contains $C_{2n_2}, \dots, C_{2n_l}$.
  Hence we may assume that $n_1 > 2$, i.e, for any $i \in [l]$, $n_i > 2$.
  Since $n_1 \leq a_2 - 1$, the first $n_1$ rungs of $L_{a_2}$ contain $C_{2n_1}$ 
  and by the induction hypothesis, the remaining $(n-n_1,1)$-weak ladder contains $C_{2n_2}, \dots, C_{2n_l}$.
  \end{proof}

  \begin{corollary} \label{weak ladder criteria - 2}
  Let $r \in \{1,2\}$, let $2 \leq n_1 \leq \cdots \leq n_l \in \mathbb{Z}^+$, and set $n=\sum_{i=1}^l n_i$. 
  Suppose that $G$ contains a $(n',k)-$weak ladder satisfying $n' \geq n-r$, $k \geq r$, and $n \geq 6k + 12$, and a disjoint ladder $L_{n''}$ for some $n'' \geq n/3$.
  If $G$ does not contain disjoint cycles $C_{2n_1}, C_{2n_2}, \dots, C_{2n_l}$, then $l=2$ and $\lfloor \frac{n+1-r}{2} \rfloor \leq n_1 \leq \frac{n}{2} \leq n_2 \leq \lceil \frac{n+r-1}{2} \rceil. $
  \end{corollary}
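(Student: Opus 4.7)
The plan is to argue by contradiction: assume $G$ does not contain disjoint $C_{2n_1}, \dots, C_{2n_l}$ and deduce the stated structural conclusion. Set $a := n - n' + k$, so $a \leq r + k \leq k + 2$, and $b := n''$, so $b \geq n/3 \geq 2k + 4$. The overarching dichotomy is: if there exists $N' \subseteq \{1, \dots, l\}$ with $a \leq \sum_{i \in N'} n_i \leq b$, then $L_{n''}$ accommodates the cycles $\{C_{2n_i} : i \in N'\}$ while the weak ladder, via Lemma \ref{weak ladder criteria - 1}, accommodates the remaining cycles whose combined index-sum is at most $n - a = n' - k$. So I may assume throughout that no subset sum lies in $[a, b]$.

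First, I extract the structural consequences of this assumption. A greedy prefix-sum argument on the sorted sequence shows that if $n_l \leq b - a$, some prefix sum lands in $[a, b]$; hence $n_l \geq b - a + 1$. Combined with the inequality $b \geq 2a - 1$, which follows from $b \geq 2k + 4$ and $a \leq k + 2$, this in turn gives $n_l \geq a$, so the singleton $\{n_l\}$ is itself a valid witness unless $n_l > b$. Therefore $n_l > b \geq n/3$. Applying the same singleton test to each index forces every $n_j$ to lie in $\{2, \dots, a-1\} \cup \{b+1, b+2, \dots\}$.

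Next I rule out $l \geq 3$ by exploiting the path-through flexibility of the weak ladder beyond Lemma \ref{weak ladder criteria - 1}. At most two of the $n_j$ exceed $b \geq n/3$ (otherwise their sum surpasses $n$), and the small $n_j$'s are all bounded by $a \leq k + 2$ while $n \geq 6k + 12$, so the configuration is quite constrained. I form the cycle $C_{2n_l}$ by threading through both connecting paths: choose $j_1, j_2 \geq 1$ with $j_1 + j_2 = n_l - k$, which is feasible since $k + 2 \leq n_l \leq n \leq n' + k$, leaving the two surviving sub-ladders with total rung capacity $n' - n_l + k$. Choosing the split $(j_1, j_2)$ appropriately makes room in one sub-ladder for any second large target $n_{l-1}$, while the remaining small cycles, each of length less than $2(k+2)$, are distributed between $L_{n''}$ and the other sub-ladder; since $n'' \geq 2k + 4$, greedy packing easily absorbs them. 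The inequality $n' + k \geq n$ guarantees that the total capacity is enough, contradicting the failure assumption, so $l = 2$.

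Finally, for $l = 2$ I have $n_2 > b$ and either $n_1 < a$ or $n_1 > b$. Running the same path-through construction to form $C_{2n_2}$ and placing $C_{2n_1}$ either in a surviving sub-ladder or in $L_{n''}$, the construction succeeds unless the sub-ladder sizes $a_1 \leq a_2$ (with $a_1 + a_2 = n' \geq n - r$) are too balanced to accommodate $C_{2n_1}$ alongside the path-through $C_{2n_2}$. Tracking the rung arithmetic in both sub-ladders and accounting for the slack $r$ from $n' \geq n - r$ yields the advertised bounds $\lfloor (n + 1 - r)/2 \rfloor \leq n_1 \leq n/2 \leq n_2 \leq \lceil (n + r - 1)/2 \rceil$. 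The main obstacle will be this last step: with no extra $n_j$ to provide flexibility, the proof must execute a precise rung count across both sub-ladders and both internal paths simultaneously to pinpoint exactly when the placement of $C_{2n_1}$ and $C_{2n_2}$ is obstructed.
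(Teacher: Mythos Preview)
Your plan follows the same skeleton as the paper's proof: the subset-sum dichotomy (offload a subfamily into $L_{n''}$, use Lemma~\ref{weak ladder criteria - 1} on the rest), a reduction to two ``large'' targets, and a final rung-count inside the weak ladder. The threshold framing with $a = n-n'+k$ and $b=n''$ is a cosmetic repackaging of the paper's $N_0=\{n_i\le k+r-1\}$.

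Two points are worth flagging. First, your dispatch of $l\ge 3$ is harder than it needs to be and, as written, has a small gap. You commit to threading $C_{2n_l}$ through both connecting paths and then placing $C_{2n_{l-1}}$ in a surviving sub-ladder; but threading forces $j_1,j_2\ge 1$, so the best you can leave on one side is $a_2-1$ rungs, and there are borderline configurations (e.g.\ $a_1=a_2=n_{l-1}$) where that is one short. The paper avoids this by a much simpler observation: if at least three of the $n_i$ exceed $k+r-1$, then the smallest of those three is at most $n/3$, so that singleton already lands in the range $[k+r,n/3]$ and the dichotomy fires immediately. When exactly two are large and some small $n_i$ exist, the paper does \emph{not} always thread; it also allows placing $y_1$ directly in a sub-ladder $L_{a_2}$ when $y_1\le a_2-1$, and the presence of any small $n_i$ then forces $y_1+y_2\le n'=a_1+a_2$, which collapses to $y_i=a_i$ and the cycles are found.

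Second, you are right that the real content is the $l=2$ rung-count, and the paper executes it exactly along the lines you anticipate: writing $y_1\le y_2$ for the two targets, it splits into $y_1\le a_2-1$ (place $C_{2y_1}$ in $L_{a_2-1}$, then $C_{2y_2}$ in the remaining $(n'-y_1,k)$-weak ladder) versus $y_1\ge a_1+1+r$ (thread an $(a_1+1+s,k)$-sub-weak-ladder for $C_{2y_1}$ with $s=\max\{0,y_1-a_1-1-k\}$, then check $y_2\le a_2-1-s$). The only surviving window is $a_2\le y_1,y_2\le a_1+r$, and since $a_1\le a_2$ and $a_1+a_2=n'\ge n-r$ this yields precisely $\lfloor (n+1-r)/2\rfloor\le n_1\le n/2\le n_2\le\lceil (n+r-1)/2\rceil$. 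Your outline would complete correctly once you incorporate the non-threading branch and the three-large shortcut.
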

  \begin{proof}
  Suppose $G$ contains an $(n',k)-$weak ladder $L$, consisting of two ladders $L_{a_1}$, $L_{a_2}$ with $a_1 + a_2 = n'$ and disjoint paths $P,Q$ such that  $|\mathring{P_1}| + |\mathring{P_2}| = 2k$.
  Let $N = \{n_i : i \in [l]\}$ and let $N_0 = \{n_i \in N: n_i \leq k+r-1 \}$.
  Note that $n' + k \geq n$.
  If there exists $N' \subset N$ such that $k + r \leq \sum_{x \in N'}x \leq \frac{n}{3}$, then $L_{n''}$ contains disjoint $C_{2x}$ for all $x \in N'$; by Lemma \ref{weak ladder criteria - 1}, the $(n',k)$-weak ladder contains the remaining cycles.
  Note that $\sum_{x \in N_0}x \leq \frac{n}{3}$. Indeed, if $\sum_{x \in N_0} x > n/3$, then there exists $N_0' \subset N_0$ such that $k+r \leq (n/3 +1) - (k+r-1) \leq \sum_{x \in N_0'} x \leq n/3$
  
  If $|N \setminus N_0| \geq 3$, then there exists $x \in N \setminus N_0$ such that $k+r \leq x \leq \frac{n}{3}$.
  If $|N \setminus N_0| = 1$, say $N \setminus N_0 = \{y\}$, then we are done as well.
  
  Finally, suppose $N \setminus N_0 = \{y_1,y_2\}$; without loss of generality, suppose $y_1 \leq y_2$. Since $\sum_{x \in N_0}x~\leq~n/3$, we find $C_{2x}$ for all $x \in N_0$ inside $L_{n''}$ . 
  If $y_1 \leq a_2 -1 $, then $C_{2y_1}\subseteq L_{y_1}\subseteq L_{a_2-1}$ and the $(n'-y_1,k)$-weak-ladder obtained by deleting $L_{y_1}$ contains $C_{2y_2}$.
  
  Finally, suppose that $y_1 \geq a_1 + 1 + r$, say $y_1 = a_1 +1 +t$ where $t \geq r$, and let $s := \max \{0 , t- k\}$. We have two cases.
  \begin{itemize}
  \item Assume $s = 0$. 
  Since $y_1 \leq a_1 +1 + k $, an $(a_1 + 1, k)$-weak ladder consisting of $L_{a_1}$ and the first rung of $L_{a_2}$ contains $C_{2y_1}$. 
  Moreover, 
  $$y_2 \leq n - (a_1 + 1 + r) \leq n' + r - (a_1 + 1+ r) = n'-a_1 -1 = a_2 -1.$$
   As such, $L_{a_2-1}$ contains $C_{2y_2}$.
  \item Assume $s > 0$.
  Since $y_1 = a_1 +1 +k +s$, an $(a_1+1+s, k)$-weak ladder consisting of $L_{a_1}$ and the first $s+1$ rung of $L_{a_2}$ contains $C_{2y_1}$. Moreover,
  $$y_2 = n - (a_1 +1 +k+s) \leq n' + r - (a_1 + 1+ r+s) \leq (n' - a_1) - (1+s) \leq a_2 - (1+s).$$
  As such, $L_{a_2 -1 -s }$ contains $C_{2y_2}$.
  \end{itemize}
  Thus $a_2 \leq y_1 \leq a_1 + r$; by a symmetric argument we obtain $a_2 \leq y_1,y_2 \leq a_1+r.$
  If there exists $y \in N_0$, then $L_{n''}$ contains $C_{2y}$ and $y_1 + y_2 \leq n-y \leq n- 2 \leq n' = a_1 + a_2$; then $y_1 = a_1, y_2 = a_2$, which implies that $L_{a_i}$ contains $C_{2y_i}$ for $i \in {1,2}$. 
  Therefore, $a_1 \leq a_2 \leq n_1 \leq n_2 \leq a_1 + r \leq a_1 + 2$, which implies that 
  $$ \left\lfloor \frac{n+1-r}{2} \right\rfloor \leq n_1 \leq \frac{n}{2} \leq n_2 \leq \left\lceil \frac{n+r-1}{2} \right\rceil. $$
  \end{proof}

  We will next show that in special situations it is easy to find a weak ladder. To prove our next lemma we will need the following theorem of Posa \cite{POSA}.
  \begin{theorem} \label{posa Hamilton} \cite{POSA} [L.Posa]
  Let $G$ be a graph on $n \geq 3$ vertices. If for every positive integer $k < \frac{n-1}{2}, |\{v : d_G(v) \leq k \}| < k$ and if, for odd $n, |\{v : d_G(v) \leq \frac{n-1}{2} \} | \leq \frac{n-1}{2}$, then $G$ is Hamiltonian.
  \end{theorem}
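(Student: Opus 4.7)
The plan is to argue by contradiction using P\'osa's classical rotation--extension technique. Suppose $G$ satisfies the two degree hypotheses but contains no Hamilton cycle. Taking $k=1$ in the first hypothesis gives $\delta(G)\geq 2$, and a similar elementary argument (looking at $k=1,2,\dots$) forces $G$ to be connected. Let $P=x_0x_1\cdots x_\ell$ be a path of maximum length in $G$. Since $G$ is not Hamiltonian, $\ell<n-1$, and by maximality of $P$ every neighbor of $x_0$ and of $x_\ell$ lies on $V(P)$. Moreover, if some index $i$ satisfied both $x_0x_{i+1}\in E(G)$ and $x_\ell x_i\in E(G)$, one would obtain a cycle on $V(P)$; connectedness together with $\ell<n-1$ would then extend this to a path strictly longer than $P$, a contradiction. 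Hence no such $i$ exists.

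Next I would apply rotations to generate many longest paths sharing the vertex set of $P$ but with different endpoints. Fixing $x_0$, whenever a current longest path $x_0y_1\cdots y_\ell$ has an edge $y_\ell y_i$ with $i<\ell-1$, I replace it by the path $x_0y_1\cdots y_iy_\ell y_{\ell-1}\cdots y_{i+1}$, whose new endpoint is $y_{i+1}$. Let $T$ be the set of all vertices arising as an endpoint of some path obtained by iterating this process from $P$. Every $v\in T$ is an endpoint of a longest path, so $N(v)\subseteq V(P)$, and the non-crossing argument of the previous paragraph applied with $v$ in place of $x_\ell$ gives a forbidden pattern for each $v\in T$ independently.

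The central combinatorial input is P\'osa's rotation lemma: for every $v\in T$ and every neighbor $w$ of $v$, the on-path neighbor of $w$ lying on the ``$v$-side'' itself belongs to $T$. Letting $S\subseteq V(P)$ consist of $T$ together with the at most two on-path neighbors of each $T$-vertex, this yields $N(T)\subseteq S$ and $|S|\leq 3|T|$. Tracking how the positions of $T$-vertices grow as more rotations are permitted, one obtains a bound of the form $|T|\geq d+1$ where $d=\min_{v\in T}d_G(v)$, and more carefully an inequality $|\{v:d_G(v)\leq m\}|\geq m$ for some positive integer $m\leq(n-1)/2$. The extra parity hypothesis for odd $n$ is used precisely to rule out the boundary case $m=(n-1)/2$.

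I expect the main obstacle to be this third step: formalising the rotation lemma cleanly enough to extract the exact integer $m$ for which $|\{v:d_G(v)\leq m\}|\geq m$, rather than only a weaker Chv\'atal- or Ore-type conclusion. The rotation process produces plenty of candidate endpoints, but matching the counts to the specific P\'osa condition requires careful bookkeeping, especially near the boundary value $(n-1)/2$. Once that lemma is in hand, comparison with the hypothesis is immediate and yields the desired contradiction, completing the proof.
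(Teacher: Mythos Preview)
The paper does not prove this statement at all; it is quoted as a known result of P\'osa with a citation to \cite{POSA} and then used as a black box (in Lemma~\ref{almost complete} and Lemma~\ref{almost complete bipartite}). So there is nothing in the paper to compare your argument against.

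That said, your sketch is the classical rotation--extension approach and is essentially correct in outline. One point to tighten: the bound you state, $|S|\le 3|T|$, is weaker than what the rotation lemma actually gives and would not by itself recover the sharp P\'osa condition. The standard formulation is that if $T$ is the set of endpoints reachable by rotations (with the other endpoint $x_0$ fixed), then every vertex of $T$ has all its $G$-neighbours inside $T\cup T^+\cup T^-$, where $T^{\pm}$ are the on-path neighbours of $T$, and moreover $|T^+\setminus T|,|T^-\setminus T|\le |T|-1$; hence each $v\in T$ satisfies $d_G(v)\le 2|T|-1$. Setting $m=|T|$ then gives $|\{v:d_G(v)\le 2m-1\}|\ge m$, which after a short parity/threshold check contradicts the hypothesis. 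Your acknowledged ``main obstacle'' is exactly this bookkeeping, and it is where the factor $2$ rather than $3$ matters.
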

  
  First, we will address the case of an almost complete graph.
  \begin{lemma} \label{almost complete}
  Let $\tau \in (0,1/10)$ and $\tau n \geq 100$. Let $G = (V,E)$ be a graph of order $n$ such that there exists $V' \subset V$ such that $|V'| \geq (1-\tau)n$ and for any $w \in V \backslash V', |N(w) \cap V'| \geq 4\tau |V'|$
  where  $V' = \{v \in V: |N(v) \cap V'| \geq (1-\tau)|V'| \}$. Let $u_1,v_1,u_2,v_2 \in V$. The followings hold.
  \begin{enumerate}
  \item There exists $z \in V$ and a ladder $L_{n_1}$ in $G[V \setminus \{u_1,v_1,u_2,v_2, z\}]$ such that $L_{n_1}$ has $\{x_1,y_1\},\{x_2,y_2\}$ as its first, last rung where $x_1 \in N(u_1), y_1 \in N(v_1), z \in N(u_1) \cap N(x_1),x_2 \in N(u_2), y_2 \in N(v_2)$ and
    $n_1 = \lfloor \frac{n - 5}{2} \rfloor $.
  \item Let $x \in N(u_1), y \in N(v_1)$ be such that $x \sim y$. $G$ contains $L_{\lfloor \frac{n-2}{2}\rfloor}$ in $G[V \setminus \{u_1,v_1\}]$ having $\{x,y\}$ as its first rung.
  \item Let $x \in N(u_1)$ and $y \in N(v_1)$ be such that $x \sim y$. For any $z \in N(u_1) \cap N(x)$, $G$ contains $L_{\lfloor \frac{n-3}{2}\rfloor}$ in $G[V \setminus \{u_1 ,v_1,z\}]$ having $\{x,y\}$ as its first rung.
  \item Let $x \in N(u_1) \cap N(v_1)$. $G$ contains $L_{\lfloor \frac{n-1}{2}\rfloor}$ in $G[V \setminus \{u_1\}]$ having $\{x,v_1\}$ as its first rung.
  \item $G$ contains a Hamilton path $P$ having $u_1,v_1$ as its end vertices.
  \end{enumerate}
  We call the vertex $z$ in parts $1$ and $3$ the parity vertex. 
  \end{lemma}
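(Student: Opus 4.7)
Our strategy for all five parts rests on the observation that $G$ is very nearly complete: the good set $V'$ has size at least $(1-\tau)n$, any two vertices of $V'$ share at least $(1-2\tau)|V'|$ common neighbors in $V'$, and each of the at most $\tau n$ bad vertices of $V \setminus V'$ still has at least $4\tau |V'|$ neighbors in $V'$. Because of this, we can build long, highly structured subgraphs greedily from any prescribed starting position, provided that we insert the bad vertices at controlled moments and reserve a small buffer for absorption at the end.

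For part 5 the cleanest approach is Pósa's theorem (Theorem \ref{posa Hamilton}) applied to the auxiliary graph $G^+ = G + w$, where $w$ is a new vertex adjacent precisely to $u_1$ and $v_1$. Every vertex of $G^+$ has degree at least $3\tau n$, and the vertices of degree below $(1-\tau)^2 n$ all lie in $(V \setminus V') \cup \{w\}$, a set of size at most $\tau n + 1$. For each positive integer $k < (n+1)/2$ we therefore have $|\{v : d(v) \leq k\}| < k$, so Pósa yields a Hamilton cycle of $G^+$, which must use $w$ and therefore gives the required Hamilton path of $G$ between $u_1$ and $v_1$.

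For parts 2, 3, and 4 we construct the ladder greedily from the prescribed first rung in two stages. In stage one we schedule the $\leq \tau n$ bad vertices onto non-adjacent rungs: when a bad vertex $w$ is to be placed at rung $i+1$, we first pick $b_i \in V' \cap N(w)$ (a set of size at least $4\tau|V'|$), then set $a_{i+1} = w$ and pick $b_{i+1} \in V' \cap N(a_i) \cap N(w)$, an intersection of size at least $3\tau|V'|$. In stage two every rung lives inside $V'$, and the $(1-2\tau)|V'|$ common-neighborhood bound makes each new rung trivial to choose. The process continues until only $O(1)$ vertices remain, at which point a small buffer reserved at the very beginning is reallocated to absorb the leftovers into the last rung and hit the exact length $\lfloor (n-c)/2 \rfloor$, where $c \in \{1,2,3\}$ is the number of excluded vertices. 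In part 3 the excluded parity vertex $z$ is simply blacklisted throughout the construction; in part 4 the vertex $v_1$ itself plays the role of $b_1$ in the first rung.

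The main obstacle is part 1, where both the first rung $\{x_1,y_1\}$ and the last rung $\{x_2,y_2\}$ are prescribed. I would first choose the five boundary vertices: pick $x_1 \in N(u_1) \cap V'$ and $y_1 \in N(v_1) \cap N(x_1) \cap V'$, symmetrically $x_2, y_2$ on the $u_2, v_2$ side, and finally $z \in N(u_1) \cap N(x_1) \setminus \{u_2, v_2, x_2, y_2\}$; each of these intersections has size $\Omega(n)$. For the ladder itself I would run the greedy construction of parts 2--4 simultaneously from both ends, forward from $\{x_1,y_1\}$ and backward from $\{x_2,y_2\}$, consuming roughly half of the available vertices on each side, and then glue the two halves by finding a short connecting ladder between the two central rungs inside the almost-complete subgraph induced on the leftover vertices. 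The excluded parity vertex $z$ provides precisely the one-vertex slack needed so that the parities of the unused vertex count and the ladder length $\lfloor (n-5)/2 \rfloor$ agree, and the requirement $z \in N(u_1) \cap N(x_1)$ is what later callers need for the outer structure built on top of this lemma.
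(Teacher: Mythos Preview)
Your argument for part 5 via P\'osa on $G^{+}=G+w$ is clean and correct (modulo the slip that $w$ itself has degree $2$, not $3\tau n$; the P\'osa count still goes through because $w$ is the only vertex of degree $\le 2$). This is different from, and arguably slicker than, what the paper does.

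For parts 1--4, however, there is a real gap. Your greedy extension in stage two does \emph{not} run until only $O(1)$ vertices remain. Once all bad vertices have been absorbed and the remaining pool $R\subseteq V'$ has size $r$, extending by one rung requires $|R|>2\tau|V'|$: you need $a_{m+1}\in N(b_m)\cap R$ and then $b_{m+1}\in N(a_m)\cap N(a_{m+1})\cap R$, and the latter set has size only $\ge r-2\tau|V'|$. So the greedy process stalls with roughly $2\tau n$ vertices left, not a constant. Your proposed fix, a constant-size ``buffer'' $S\subseteq V'$ reserved at the start, does not help: two vertices of $V'$ can have up to $\tau|V'|$ non-neighbours in $V'$, so a buffer of bounded size need not contain a single edge, let alone one compatible with the current last rung. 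The same issue bites the ``glue in the middle'' step of part~1. To reach the exact spanning length $\lfloor(n-c)/2\rfloor$ you need a genuinely global argument, not a local greedy one.

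The paper avoids this by passing to an auxiliary graph on a spanning set of edges rather than building the ladder vertex by vertex. After removing the prescribed boundary vertices it takes a matching $M_1$ saturating the bad set $V''$ into $V'$, then a near-perfect matching $M_2$ on the remaining good vertices (the induced graph there is Dirac), and forms a graph $H$ on $M_1\cup M_2$ in which two matching edges are adjacent when they can sit on consecutive rungs. Edges of $M_2$ have $H$-degree above $|H|/2$ and edges of $M_1$ have $H$-degree above $\tau n\ge|M_1|$, so P\'osa gives a Hamilton cycle in $H$; inserting the prescribed first and last rungs $e_1,e_2$ (which also have $H$-degree above $|H|/2$) at an appropriate cut of this cycle yields the ladder of the exact required length with both endpoints controlled. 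This single reduction handles parts 1--4 uniformly and sidesteps the stalling problem entirely.
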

  \begin{proof}
    We will only prove part (1) as the other parts are very similar.
    Let $V'' = V \setminus V'$. Since $|N(u_1) \cap V'|, |N(v_1) \cap V'| \geq 4\tau |V'| > 3\tau n$, there exists $x_1 \in N(u_1) \cap V', y_1 \in N(v_1) \cap V'$ such that $x_1 \sim y_1$ 
    and the same is true for vertices $u_2, v_2$. Let $e_1 = \{x_1,y_1\}, e_2 = \{x_2,y_2\}$. 
    Moreover, since $|N(u_1) \cap N(x_1)| \geq 3 \tau |V'| > 8$, we can choose $z \in N(u_1) \cap N(x_1)$ which is different than any other vertex already chosen.
    
    Now, let $G' = G[V \setminus \{ u_1,v_1,u_2,v_2,x_1,y_1,x_2,y_2, z\}]$ and redefine $V' := V' \cap V(G'), V'' := V'' \cap V(G')$.
    For any $w \in V'', |N(w) \cap V'| \geq 3\tau n - 9  > \tau n \geq |V''|$, so there exists a matching $M_1 \in E(V'', V')$ saturating $V''$.
    Note that $|M_1| \leq |V''| \leq \tau n$. Let $G'' = G[V' \setminus V(M_1)]$. 
    Since $$ \delta(G'') \geq (1-\tau)^2 n - (2\tau n + 9) > (1-5\tau)n > \frac{n}{2} > \frac{|G''|}{2},$$
     $G''$ is Hamiltonian, so there exists a matching of size $\left\lfloor \frac{|G''|}{2} \right\rfloor$ in $G''$, say $M_2$.
    Let $M = M_1 \cup M_2$ and define the auxiliary graph $H = (M,E')$ with the vertex set $M$ and the edge set $E'$ as follows: 
    Let $e' =\{x',y'\}, e'' = \{x'',y''\} \in M$. If $e',e'' \in M_1$ then $\{e',e''\} \notin E'.$
    Otherwise, $\{e',e''\} \in E'$ if $G[e',e'']$ contains a matching of size 2.
    
    If $e \in M_1$ then $d_{H}(e) \geq |N_{H}(e) \cap M_2| > \tau n$, and for any other $e \in M_2$, $d_{H}(e) \geq |N_{H}(e) \cap M_2| \geq |M_2| - \tau |V'| \geq (\frac{1}{2} - 3\tau)n > \frac{|H|}{2}$. 
    Since $|M_1| \leq \tau n$, by Theorem \ref{posa Hamilton}, $H$ contains a Hamiltonian cycle $C$, say $C:= u_1 \dots u_{n'}$ where $n' = \lfloor \frac{n-9}{2} \rfloor$.
    
    Since $d_H(e_1),d_H(e_2) > \frac{|H|}{2}$, there exists $i \in [n']$ such that $u_i \in N(e_1), u_{i+1} \in N(e_2)$
    then $ e_1 u_i C u_{i+1} e_2$, gives a ladder $L_{n_1}$ having $e_1$ and $e_2$ as its first and last rungs with $n_1 = n' + 2 = \lfloor \frac{n - 5}{2} \rfloor$.
  \end{proof}
  
  We call the graph satisfying the condition in Lemma \ref{almost complete} a {\emph{$\tau$-complete graph}}, the vertex set $V'$ the {\emph{major set}} and $V''$ the {\emph{minor set}}.
  \begin{fact} \label{almost complete fact 1}
  If $G$ is $\tau$-complete then for any subset $U$ of the minor set, $G[V \setminus U]$ is still $\tau$-complete.
  \end{fact}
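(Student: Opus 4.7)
The plan is to show that the very same set $V'$ which witnesses $\tau$-completeness of $G$ also witnesses $\tau$-completeness of $G' := G[V \setminus U]$, with the minor set shrinking from $V''$ to $V'' \setminus U$. Writing $n' := |V \setminus U|$, there are three things to check: (a) the size condition $|V'| \geq (1-\tau) n'$; (b) the self-referential identity $V' = \{v \in V \setminus U : |N_{G'}(v) \cap V'| \geq (1-\tau)|V'|\}$; and (c) the minor-set degree bound $|N_{G'}(w) \cap V'| \geq 4\tau |V'|$ for every $w \in V'' \setminus U$.

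All three follow from a single observation: because $U \subseteq V'' = V \setminus V'$, no vertex of $V'$ is deleted, and therefore $N_{G'}(v) \cap V' = N_G(v) \cap V'$ for every $v \in V \setminus U$. For (a), $n' = n - |U| \leq n$, so $|V'| \geq (1-\tau)n \geq (1-\tau)n'$. For (b), the forward inclusion holds since every $v \in V'$ keeps the same intersection with $V'$, hence the same lower bound $(1-\tau)|V'|$; for the reverse inclusion, the original fixed-point definition of $V'$ gives $|N_G(w) \cap V'| < (1-\tau)|V'|$ for every $w \in V \setminus V'$, and this strict inequality is inherited by $G'$. For (c), the bound $|N_G(w) \cap V'| \geq 4\tau|V'|$ for $w \in V'' \setminus U \subseteq V''$ transfers directly to $G'$ by the same observation. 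There is no real obstacle; this is purely bookkeeping, recording that deletions confined to the minor set leave both $V'$ and all neighborhood sizes into $V'$ unchanged.
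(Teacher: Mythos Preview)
Your proof is correct and gives exactly the verification the paper omits: the paper states this fact without proof, treating it as immediate from the definition. Your key observation---that $U \subseteq V''$ means $V'$ is untouched, so all intersections $N(v) \cap V'$ are preserved when passing to $G[V\setminus U]$---is precisely the reason the authors consider the fact self-evident, and your check of the three conditions (size of $V'$, the fixed-point identity for $V'$, and the minor-set degree bound) is complete.
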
  
  Moreover,
  \begin{corollary} \label{attaching two components}
  Let $\tau \in (0,1/10)$. Let $G = (V,E)$ be a graph and $X_1 \subset V, X_2 \subset V$ be two disjoint vertex subsets such that $G[X],G[Y]$ are $\tau$-complete and $|X_1|,|X_2| \geq \frac{100}{\tau}$.
  Suppose that there exist $\{u_1,u_2\}, \{v_1,v_2\} \in E(X_1,X_2)$.
  Then $G[X_1 \cup X_2]$ contains $(n',2)$-weak ladder where $n' \geq \lfloor \frac{|X_1|}{2} \rfloor + \lfloor \frac{|X_2|}{2} \rfloor - 2$.
  Furthermore, if $u_1 \sim v_1$ or $u_2 \sim v_2$ then $G[X_1 \cup X_2]$ contains $(n',1)$-weak ladder where $n' \geq \left\lfloor \frac{|X_1|}{2} \right\rfloor + \left\lfloor \frac{|X_2|}{2} \right\rfloor - 1$.
  \end{corollary}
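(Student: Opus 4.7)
The plan is to split the argument into the two assertions separately: the general $(n',2)$-weak ladder is obtained by a symmetric application of Lemma \ref{almost complete}(2) inside $G[X_1]$ and $G[X_2]$, while the sharper $(n',1)$-weak ladder follows when the extra edge $u_1v_1$ (or $u_2v_2$) can be installed as the first rung of a nearly spanning ladder, thereby saving one unit of parity slack.

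For the first assertion I would apply Lemma \ref{almost complete}(2) inside $G[X_1]$ with $u_1,v_1$ designated as the vertices to avoid. This furnishes $x_1\in N(u_1)\cap X_1$ and $y_1\in N(v_1)\cap X_1$ with $x_1\sim y_1$, together with a ladder $L_1$ of size $\lfloor (|X_1|-2)/2\rfloor=\lfloor |X_1|/2\rfloor-1$ in $G[X_1\setminus\{u_1,v_1\}]$ having first rung $\{x_1,y_1\}$. An identical application inside $G[X_2]$ produces $x_2,y_2,L_2$. I would then connect the two ladders with $P_1=x_1u_1u_2x_2$ (using the crossing edge $u_1u_2$) and $P_2=y_1v_1v_2y_2$ (using $v_1v_2$); each path has two internal vertices, and the containments $\{u_1,v_1\}\subset X_1\setminus V(L_1)$, $\{u_2,v_2\}\subset X_2\setminus V(L_2)$ make all disjointness conditions immediate. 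The result is a $(n',2)$-weak ladder with $n'=\lfloor |X_1|/2\rfloor+\lfloor |X_2|/2\rfloor-2$.

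For the second assertion I would assume without loss of generality that $u_1\sim v_1$. The key idea is to use $u_1v_1$ itself as the first rung of a ladder $L_1$ in $G[X_1]$ of size $\lfloor |X_1|/2\rfloor$ (spanning when $|X_1|$ is even, and missing exactly one vertex when $|X_1|$ is odd). On the other side I would apply Lemma \ref{almost complete}(2) inside $G[X_2]$ with $u_2,v_2$ designated to obtain $L_2$ of size $\lfloor |X_2|/2\rfloor-1$ in $G[X_2\setminus\{u_2,v_2\}]$ with first rung $\{x,y\}$ satisfying $x\in N(u_2)$, $y\in N(v_2)$, $x\sim y$. Joining them by $P_1=u_1u_2x$ and $P_2=v_1v_2y$, each of length two with the single internal vertex $u_2$, respectively $v_2$, disjoint from both ladders, gives a $(n',1)$-weak ladder with $n'=\lfloor |X_1|/2\rfloor+\lfloor |X_2|/2\rfloor-1$, as required.

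The main obstacle is that the ladder $L_1$ with prescribed first rung $\{u_1,v_1\}$ nearly spanning $X_1$ is not one of the five conclusions of Lemma \ref{almost complete} as stated. I would supply it by repeating the matching-plus-Posa technique underlying that lemma: reduce to even order by discarding a minor-set vertex (via Fact \ref{almost complete fact 1}) if $|X_1|$ is odd; extract a perfect matching $M$ of $X_1\setminus\{u_1,v_1\}$ inside the $\tau$-complete subgraph; form the auxiliary graph on $M\cup\{\{u_1,v_1\}\}$ in which two matching edges are adjacent iff their four endpoints admit a matching of size two; verify from $\tau$-completeness that this auxiliary graph has minimum degree at least half its order; and invoke Theorem \ref{posa Hamilton} to extract a Hamilton path starting at $\{u_1,v_1\}$, which translates directly into the ladder sought.
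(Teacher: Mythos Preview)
Your argument for the first assertion is correct and matches the paper's proof exactly: apply Lemma~\ref{almost complete}(2) symmetrically in each $X_i$ and splice via the two crossing edges.

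For the ``Furthermore'' part, your approach is correct but more laborious than what the paper has in mind. The paper's proof says only that this case ``is obvious,'' and the intended observation is the following: once you already have, from the first part, the ladder $L_1\subset G[X_1\setminus\{u_1,v_1\}]$ of size $\lfloor|X_1|/2\rfloor-1$ with first rung $\{x_1,y_1\}$ where $x_1\in N(u_1)$, $y_1\in N(v_1)$, and $x_1\sim y_1$, the extra hypothesis $u_1\sim v_1$ means that $\{u_1,v_1\}$ can simply be prepended to $L_1$ as a new first rung (the four edges $u_1v_1$, $u_1x_1$, $v_1y_1$, $x_1y_1$ form the required $C_4$). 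This immediately gives a ladder of size $\lfloor|X_1|/2\rfloor$ with first rung $\{u_1,v_1\}$, and the connecting paths $u_1u_2x_2$ and $v_1v_2y_2$ now have a single internal vertex each, giving the $(n',1)$-weak ladder with $n'=\lfloor|X_1|/2\rfloor+\lfloor|X_2|/2\rfloor-1$. Your reconstruction of this ladder from scratch via the matching-plus-P\'osa machinery is valid in spirit, but it duplicates work already encapsulated in the first part and introduces avoidable technicalities (handling odd $|X_1|$ when the minor set is empty, verifying the P\'osa degree condition for the special edge $\{u_1,v_1\}$ when one of $u_1,v_1$ is minor).
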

  \begin{proof}
  For $i \in [2]$, by Lemma \ref{almost complete} (2), $G[X_i]$ contains $L_{\lfloor \frac{|X_i| - 2}{2} \rfloor}$ having $\{x_i,y_i\}$ as its first rung where $x_i \in N(u_i), y_i \in N(v_i) $. 
  By attaching two ladders with $\{u_1,u_2\}, \{v_1,v_2\} $, we obtain a $(n',2)$-weak ladder where $n' \geq \lfloor \frac{|X_1|}{2} \rfloor + \lfloor \frac{|X_2|}{2} \rfloor - 2$ and the "Furthermore" is obvious. 
  \end{proof}
  
  Next, we will address the case of almost complete bipartite graph.
  \begin{lemma} \label{almost complete bipartite}
  Let $\tau \in (0,\frac{1}{100})$. Let $G = (X,Y,E)$ be a bipartite graph with bipartition $X,Y$ such that $n = |X| = |Y|$ and $\tau n \geq 100$.
  Suppose that there exists $X' \subset X, Y' \subset Y$ such that  for any $x \in X', y \in Y'$, $|N(x) \cap Y'| \geq (1-\tau) n, |N(y) \cap X'| \geq (1- \tau) n$ and for any $x \in X \setminus X', y \in Y \setminus Y', |N(x) \cap Y'| \geq 4 \tau n, |N(y) \cap X'| \geq 4 \tau n.$
  Let $e_1,e_2,e_3,e_4$ be  such that for any $e_i$, $i \in [4]$, $|e_i \cap (X' \cup Y')| \geq 1$. 
  Then $G$ contains $L_n$ having $e_i$ as its $f(i)$th rung where $f(1) = 1$ and for any $i \in [3]$, $0 < f(i+1) - f(i) \leq 3$.
  Furthermore, if $|e_i \cap (X' \cup Y')| = 2 $ then we have $|f(i+1) - f(i)| \leq 2$.
  \end{lemma}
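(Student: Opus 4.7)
The plan is to build the ladder $L_n$ directly, by placing the prescribed edges $e_1,\dots,e_4$ and all ``bad'' vertices (those in $X'' := X\setminus X'$ and $Y'':=Y\setminus Y'$) at widely-spaced rung positions, and then filling in the remaining rungs greedily using the almost-complete structure of $G[X',Y']$. The hypotheses imply $|X''|,|Y''|\le\tau n$, that every good vertex has at most $\tau n$ non-neighbors in the opposite major set, and that every bad vertex has at least $4\tau n$ neighbors in the opposite major set.

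First choose $f(1)=1$ and $f(2),f(3),f(4)\in[1,10]$ satisfying the stated gap constraints; concretely take $f(i+1)-f(i)=2$ whenever $|e_i\cap(X'\cup Y')|=2$ and $f(i+1)-f(i)=3$ otherwise, so that between any two consecutive special positions among $f(1),\dots,f(4)$ there is at least one regular ``buffer'' rung. Next assign each free bad vertex $v\in(X''\cup Y'')\setminus\bigcup_i V(e_i)$ a distinct rung position in $[20,n]$, so that any two assigned positions differ by at least $4$. Since the number of free bad vertices is at most $2\tau n\ll n/4$, such an assignment exists. At each special position $p$ (i.e.\ $p=f(i)$, or $p$ is a free bad vertex position), fix the corresponding vertex or vertices and \emph{reserve} at rungs $p-1$ and $p+1$ a suitable neighbor of each bad vertex from the opposite major set. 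Since every bad vertex has at least $4\tau n$ good neighbors on the opposite side and only $O(1)$ reservations accumulate near any given position, these reservations can be made consistently.

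After these steps, all unused vertices lie in $X'\cup Y'$, and the remaining (regular) rung positions fall into blocks each bordered by at most two reserved vertices. We complete the ladder inside the bipartite graph on unused good vertices, which has minimum degree $\ge(1-2\tau)n$: at each new rung, pick any unused $X'$-vertex adjacent to the previous $Y'$-vertex, then any unused $Y'$-vertex adjacent to both the new and the previous $X'$-vertices. At every step at least $(1-O(\tau))n$ candidates are available, so the greedy process never fails. A final verification confirms that every rung edge $a_pb_p$ and every side edge $a_pb_{p+1},\, a_{p+1}b_p$ is present: between two regular rungs by the greedy choices, and at the boundary between a regular rung and a special rung by the reservation step.

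The principal obstacle is the global endpoint-matching at the interfaces between blocks: the greedy extension within a regular block must terminate at the reserved vertices at the start of the next special rung. Because each block has $O(1)$ prescribed boundary vertices and must be realized inside the dense bipartite graph on $X'$ and $Y'$, this reduces to a bipartite Hamilton-path problem in an almost-complete bipartite graph, which follows from standard dense-graph arguments (e.g., an application of P\'{o}sa's theorem, Theorem~\ref{posa Hamilton}, to an auxiliary graph, or a direct greedy construction with at most a single local backtracking step).
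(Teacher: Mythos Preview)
Your outline diverges from the paper's argument and, as written, has a genuine gap at the spanning step you flag in your final paragraph.

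The paper does not build the ladder position-by-position. Instead it first constructs a short ladder $L_q$ (with $q\le 10$) containing $e_1,\dots,e_4$ at the required positions, then wraps each remaining bad vertex in a private $3$-ladder using two good neighbors on each side, takes a perfect matching $M_2$ on the leftover good vertices, and finally defines an auxiliary graph $H$ whose vertices are these $3$-ladders together with the edges of $M_2$, with two objects adjacent in $H$ when they can be joined by a $K_{2,2}$. One checks $\delta(H)>|H|/2$, so $H$ has a Hamilton cycle; reading off this cycle and prepending $L_q$ gives a spanning ladder. The point is that spanning is built into the construction: every vertex sits in exactly one $H$-vertex, and Hamiltonicity of $H$ is what stitches them together.

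Your greedy approach does not secure this. The issue is not the interfaces between blocks but the global count: once roughly $(1-2\tau)n$ rungs have been placed, the number of unused $X'$-neighbors of the previous $Y'$-vertex can drop to zero, so the one-rung-at-a-time extension may stall well before rung $n$. Your proposed fix, reducing each block to a ``bipartite Hamilton-path problem in an almost-complete bipartite graph,'' is not well-posed, because you have not assigned a vertex set to any block; the good vertices form a single global pool. To make a block-by-block argument work you would first have to partition the unused good vertices into balanced pieces of the right sizes and then prove a Hamilton-path-with-prescribed-ends lemma in each piece. That is doable, but it is real work, and it is precisely what the paper's auxiliary-graph-plus-Dirac argument accomplishes in one stroke. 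The parenthetical pointer to ``P\'osa's theorem on an auxiliary graph'' is on the right track, but you have not said what the auxiliary graph is; specifying it essentially recovers the paper's proof.
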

  \begin{proof}
  Let $V' = X' \cup Y', V'' = X'' \cup Y''$ and note that $|X'|, |Y'| \geq (1-\tau)n$.
  Let $i \in [3]$. If $|e_i \cap (X' \cup Y')| = 2$ then we can choose $e \in E(X',Y')$ such that $G[e, e_i] \cong K_{2,2}$ and $G[e, e_{i+1}] \cong K_{2,2}$.
  Otherwise, we can choose $e',e'' \in E(X',Y')$ such that $G[e_i,e'], G[e',e''], G[e'', e_{i+1}] \cong K_{2,2}$.
  Hence we obtain a $L_q$ where $q \leq 10$ having $e_i$ as its $f(i)$th rung such that $f : [4] \rightarrow [q]$ satisfies the condition in the lemma.

  Now, let $X' = X' \setminus V(L_q)$ and $Y' = Y' \setminus V(L_q)$, set $X'' = X'' \setminus V(L_q)$ and $Y'' = Y'' \setminus V(L_q)$, and set $X = X' \cup X''$, $Y = Y' \cup Y''$. Note that $V = X \cup Y$.
  For any $x \in X''$, since $|N(x) \cap Y'| \geq 3\tau n > |X''|,$ there exists a matching $M_{X''}$ saturating $X''$.
  Similarly, there exists matching $M_{Y''}$ saturating $Y''$. 
  
  Let $M_1 = M_{X''} \cup M_{Y''} $ and $G' = G[V \setminus V(M_1)]$.
  For each $e = \{x_i,y_i\} \in M_1$, we can pick $x_i',x_i'' \in (N(y_i) \cap X') $, $y_i',y_i'' \in N(x_i) \cap Y' $, so that all vertices are distinct and $x_i' \sim y_i', x_i'' \sim y_i''$.
  This is possible because $|N(y_i) \cap X'|  > 3\tau n \geq 3|M_{X''}| $ and $|N(x_i) \cap Y'| > 3\tau n \geq 3|M_{Y''}|. $
  Then $G[\{x_i,y_i,x_i',x_i'',y_i',y_i''\}]$ contains a 3-ladder, which we will denote by $L_i$.  
  We have $|X''|+|Y''| = m$ 3-ladders each containing exactly one vertex from $X'' \cup Y''$.

  Let $X''' = X' \setminus (\cup_{i \in [m]}L_i)$,  $Y''' = Y' \setminus (\cup_{i \in [m]}L_i)$.
  Then $|Y'''|= |X'''| \geq (1-3\tau)n - q > n/2.$
  For any $x \in X'''$, 
  \begin{align*}
  |N(x) \cap Y'''| &\geq |Y'''| - \tau n - |(V(\bar{M_1}) \cap Y''' |  \\                   
                   &> |Y'''| - 4\tau n \\
                   & > (1-8 \tau)|Y'''| > \frac{|Y'''|}{2},
  \end{align*}
  so there exists a matching $M_2$ saturating $X'''$. Define the auxiliary graph $H$ as follows. For every $L_i$, consider vertex $v_{L_i}$ and let 
  $$ V(H) = \{v_{L_i} : i \in [m] \} \cup \{e : e \in M_2 \}. $$ 
  For $e = \{a_i,b_i\} , e' = \{a_j,b_j\} \in M_2$, we put $\{e,e'\} \in E(H)$ if $G[\{a_i,a_j\} ,\{b_i,b_j\}] = K_{2,2}$; for $v_{L_i} \in V(H)$ and $e = \{a_j,b_j\} \in M_2$, we put $\{v_{L_i}, e\} \in E(H)$ if $a_j \in N(y_i') \cap N(y_i'')$ and $b_j \in N(x_i') \cap N(x_i'')$.
  Then $\delta(H) \geq |H| - 10 \tau n > |H|/2$ and so $H$ is Hamiltonian;
  this gives the desired ladder $L_n$ by attaching $L_q$ as its first $q$ rungs.
  \end{proof}
  
  Similarly, we also have another lemma for the case that $G$ is almost complete bipartite, but the sizes of the sets in the bipartition differ.
  \begin{lemma} \label{almost complete bipartite - 1}
  Let $\tau \in (0,\frac{1}{100})$ and $C \in \mathbb{R}$ be such that $\tau C \leq \frac{1}{300}$. Let $G = (X,Y,E)$ be a bipartite graph with bipartition $X,Y$ such that $n = |Y| \leq |X| \leq C n$ and $\tau n \geq 100$.
  Suppose that there exists $X' \subset X, Y' \subset Y$ such that for any $y \in Y'$ we have $|N(y) \cap X| \geq (1-\tau) |X|$, and for any $x \in X$ we have $|N(x) \cap Y'| \geq (1- \tau) |Y'|.$ Suppose also every $y \in Y \setminus Y'$ satisfies $|N(y) \cap X| \geq 4 \tau |X|.$
  Let $e_1,e_2,e_3,e_4$ be  such that for any $e_i$, $i \in [4]$, $|e_i \cap (X' \cup Y')| \geq 1$. 
  Then $G$ contains $L_n$ having $e_i$ as its $f(i)$th rung where $f(1) = 1$ and for any $i \in [3]$, $0 < f(i+1) - f(i) \leq 3$.
  Furthermore, if $|e_i \cap (X' \cup Y')| = 2 $ then we have $|f(i+1) - f(i)| \leq 2$.
  \end{lemma}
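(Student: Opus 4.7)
The argument mirrors that of Lemma~\ref{almost complete bipartite}, with modifications to handle the imbalance $|X| \geq |Y|$ and the fact that the only "bad" vertices live in $Y \setminus Y'$. The extra $|X|-n$ vertices of $X$ simply will not appear in the final ladder $L_n$ on $2n$ vertices.

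First, I would build a short initial ladder $L_q$ with $q \leq 10$ containing $e_1,e_2,e_3,e_4$ as its $f(1),f(2),f(3),f(4)$-th rungs, satisfying the stated constraints on $f$. Between consecutive rungs $e_i, e_{i+1}$, insert either one or two "good" bridging rungs using vertices of $X' \cup Y'$, according to whether both endpoints of $e_i$ and $e_{i+1}$ lie in $X' \cup Y'$ (one intermediate rung, so $f(i+1)-f(i)\leq 2$) or not (two intermediate rungs, so $f(i+1)-f(i)\leq 3$). Each such bridging rung exists greedily from the degree conditions $|N(y)\cap X|\geq (1-\tau)|X|$ for $y\in Y'$ and $|N(x)\cap Y'|\geq (1-\tau)|Y'|$ for $x\in X$.

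Next, after deleting the vertices of $L_q$, saturate the bad set $Y'':=Y\setminus Y'$ by a matching $M_{Y''}$ into $X$. Every $y\in Y''$ has at least $4\tau|X|-q$ neighbors in $X$, and the hypothesis $\tau C\leq 1/300$ together with $|X|\leq Cn$ ensures $4\tau|X|\geq |Y''|$ (any subset $S\subseteq Y''$ has $|N(S)\cap X|\geq 4\tau|X|\geq |S|$), so Hall's theorem supplies $M_{Y''}$. For each edge $\{x_i,y_i\}\in M_{Y''}$, enlarge it to a disjoint $3$-ladder $L_i$ by selecting two new neighbors of $y_i$ in $X$ and two new neighbors of $x_i$ in $Y'$, glued by the dense bipartite structure between $X$ and $Y'$, exactly as in Lemma~\ref{almost complete bipartite}.

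Finally, let $X''':= X\setminus(V(L_q)\cup\bigcup_i V(L_i))$ and $Y''' := Y'\setminus(V(L_q)\cup\bigcup_i V(L_i))$. The induced bipartite graph remains almost complete on the $Y'''$-side, and since $|X'''|\geq |Y'''|$ one can choose a matching $M_2$ covering $Y'''$ (discarding the surplus $X$-vertices). Form the auxiliary graph $H$ on vertex set $M_2\cup\{v_{L_i}\}$ with edges capturing the "glueable" adjacencies, precisely as in the proof of Lemma~\ref{almost complete bipartite}; the degree calculations there transfer verbatim (the one-sidedness of the bad set only helps), giving $\delta(H)>|H|/2$, so Theorem~\ref{posa Hamilton} yields a Hamilton cycle in $H$. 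Unpacking this cycle and prepending $L_q$ as its first $q$ rungs produces the required $L_n$ with each $e_i$ in its prescribed position.

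\textbf{Main obstacle.} The central new difficulty, relative to the balanced case, is that $|X|$ may exceed $|Y|$ by a factor up to $C$, so one cannot symmetrically "saturate all of $X$"; the matching $M_2$ must be chosen to cover only $Y'''$, with the surplus $X$-vertices simply left out. Ensuring Hall's condition for $M_{Y''}$ and for $M_2$ in the unbalanced regime is exactly where the quantitative bound $\tau C\leq 1/300$ is used, and it is the only place in the proof where the parameter $C$ enters essentially.
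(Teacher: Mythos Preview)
Your proposal is correct and follows essentially the same route as the paper: build the short prefix ladder $L_q$, saturate $Y''$ by a matching into $X$, thicken each matching edge to a $3$-ladder, discard the surplus $X$-vertices, and finish via a Hamilton cycle in the auxiliary graph $H$. One minor discrepancy worth noting: the paper first restricts to a subset $X'''\subset X$ with $|X'''|=|Y'''|$ and then matches, whereas you match $Y'''$ into $X$ directly---these are equivalent. More substantively, your ``Main obstacle'' paragraph mislocates the role of $\tau C\le 1/300$: Hall's condition for $M_{Y''}$ only needs $|Y''|<3\tau|X|$, which follows from $|X|\ge n$ and $|Y''|$ being small, with no reference to $C$. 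The bound $\tau C\le 1/300$ is actually consumed in the final step, where the minimum degree of $H$ is estimated as $\delta(H)\ge |H|-20C\tau n>|H|/2$; here the non-neighbourhood of a $Y'''$-vertex in $X$ can be as large as $\tau|X|\le \tau C n$, and it is this term that must be kept below $|H|/2$.
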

  \begin{proof}
  The proof is similar to the proof of Lemma \ref{almost complete bipartite}. In the same way, we obtain $L_q$ containing $e_1,e_2,e_3,e_4$ in desired positions and let $X = X \setminus V(L_q), Y' = Y' \setminus V(L_q),  Y'' = (Y \setminus Y')
   \setminus V(L_q)$, so that $V = X \cup Y' \cup Y''$.
  For any $y \in Y''$, since $|N(y) \cap X| \geq 3 \tau |X| > |Y''|,$ there exists a matching $M$ saturating $Y''$.

  Let $G' = G[V \setminus V(M)]$.
  For each $e = \{x_i,y_i\} \in M$, we can pick $x_i',x_i'' \in N(y_i) $, $y_i',y_i'' \in N(x_i) \cap Y' $, so that all vertices are distinct and $x_i' \sim y_i', x_i'' \sim y_i''$.
  This is possible because $|N(y_i) \cap X'|  > 3 \tau |X| \geq 3|M| $ and for any $x_i,x_i',x_i''$, $|N_{G'}(x_i) \cap Y'|,|N_{G'}(x_i') \cap Y'|,|N_{G'}(x_i'') \cap Y'| > (1-\tau)n > (\frac{1}{2} + 3\tau)n$.
  Then $G[\{x_i,y_i,x_i',x_i'',y_i',y_i''\}]$ contains a 3-ladder, which we will denote by $L_i$.  
  We have $|Y''| = m$ 3-ladders each containing exactly one vertex from $Y''$.

  Let $Y''' = Y' \setminus (\cup_{i \in [m]}L_i)$ and choose $X''' \subset X \setminus (\cup_{i \in [m]}L_i)$ such that $|X'''| = |Y'''|$.
  Then $|Y'''|= |X'''| \geq (1-3\tau)n - q > n/2.$
  For any $x \in X'''$, 
  \begin{align*}
  |N(x) \cap Y'''| &\geq |Y'''| - \tau n - |(V(\bar{M_1}) \cap Y''' |  \\                   
                   &> |Y'''| - 4\tau n \\
                   & > (1-8 \tau)|Y'''| > \frac{|Y'''|}{2},
  \end{align*}
  so there exists a matching $M_2$ saturating $X'''$. Define the auxiliary graph $H$ as follows. For every $L_i$, consider vertex $v_{L_i}$ and let 
  $$ V(H) = \{v_{L_i} : i \in [m] \} \cup \{e : e \in M_2 \}. $$ 
  For $e = \{a_i,b_i\} , e' = \{a_j,b_j\} \in M_2$, $\{e,e'\} \in E(H)$ if $G[\{a_i,a_j\} ,\{b_i,b_j\}] = K_{2,2}$ and for $v_{L_i} \in V(H), e = \{a_j,b_j\} \in M_2$, $\{v_{L_i}, e\} \in E(H)$ if $a_j \in N(y_i') \cap N(y_i'')$, $b_j \in N(x_i') \cap N(x_i'')$.
  Then $\delta(H) \geq |H| - 20 C\tau n > |H|/2$ and then $H$ is Hamiltonian,
  which gives a desired ladder $L_n$ by attaching $L_q$ as its first $q$ rungs.
  \end{proof}
  
  A {\emph{T-graph}} is graph obtained from two disjoint paths $P_1= v_1, \dots, v_m$ and $P_2= w_1, \dots, w_l$ by adding an edge $w_1v_i$ for some $i=1, \dots, m$. In \cite{CK}, it is shown that if $P=V_1, \dots, V_{2s}$ is a path consisting of pairwise-disjoint sets $V_i$ such that
  $|V_1|=l-1$, $|V_{2s-1}|=l+1$, $|V_i|=l$ for every other $i$, and in which $(V_i,V_{i+1})$ is $(\epsilon, \delta)$-super regular for suitably chosen $\epsilon$ and $\delta$, then
  $G\left[\bigcup V_i\right]$ contains a spanning ladder.
  We will use this result in one part of our argument but in many other places the following, much weaker statement will suffice.
  \begin{lemma}\label{ladder}
    There exist $0< \epsilon$, $10\sqrt{\epsilon} < d < 1$, and $l_0$ such that the following holds.
    Let $P=V_1, \dots, V_{r}$ be a path consisting of pairwise-disjoint sets $V_i$ such that
    $|V_i|=l\geq l_0$ and in which $(V_i,V_{i+1})$ is $(\epsilon, d)$-super regular. In addition, let $x_1\in V_1, x_2\in V_2$. Then $G\left[\bigcup V_i\setminus \{x_1, x_2\}\right]$ contains a ladder $L$ such that the first rung of $L$ is in $N(x_1)\cap V_2$, $N(x_2)\cap V_1$ and $|L| \geq (1- 5\sqrt{\epsilon}/d)rl$.
  \end{lemma}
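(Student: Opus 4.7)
The plan is to construct the ladder rung-by-rung as we march along the path of super-regular pairs, allowing the rung orientation to flip each time we cross from one pair to the next.

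First, in each cluster $V_i$ I remove the few ``atypical'' vertices whose degree into $V_{i-1}$ or $V_{i+1}$ deviates substantially from the average; standard regularity estimates guarantee at most $O(\epsilon l)$ such vertices, and Lemma~\ref{slicing lemma} shows that the restricted pairs $(V_i^*,V_{i+1}^*)$ remain $(2\epsilon, d-2\epsilon)$-super-regular. In particular $|N(x_1)\cap V_2^*|, |N(x_2)\cap V_1^*|\geq (d-3\epsilon)l$, so applying $\epsilon$-regularity of the pair $(V_1^*,V_2^*)$ to these two sets I can pick a first rung $u_0v_0$ with $u_0\in N(x_2)\cap V_1^*\setminus\{x_1\}$ and $v_0\in N(x_1)\cap V_2^*\setminus\{x_2\}$, fulfilling the boundary requirement.

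Given the current last rung $(a_k,b_k)$ inside a pair $(V_i^*,V_{i+1}^*)$, I extend by appending a new rung $(a_{k+1},b_{k+1})$ that forms a $K_{2,2}$ with $(a_k,b_k)$: this amounts to finding $a_{k+1}\in N(b_k)\cap (V_i^*\setminus U_i)$ and $b_{k+1}\in N(a_k)\cap (V_{i+1}^*\setminus U_{i+1})$ with $a_{k+1}b_{k+1}\in E(G)$, where $U_i,U_{i+1}$ are the sets already used. Super-regularity gives $|N(b_k)\cap V_i^*|,|N(a_k)\cap V_{i+1}^*|\geq (d-3\epsilon)l$, and these sets remain $\epsilon'$-regular of density close to $d$ after restriction to unused vertices as long as $|U_i|,|U_{i+1}|\leq (1-\sqrt{\epsilon}/d)l$, so an edge—and hence a valid new rung—always exists. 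I therefore proceed greedily, stopping inside the pair only when all but $O(\sqrt{\epsilon}l/d)$ vertices per side are used. To cross into $(V_{i+1}^*,V_{i+2}^*)$ I insert a single flip rung $(b'_{k+1},a'_{k+1})$ with $b'_{k+1}\in V_{i+1}^*, a'_{k+1}\in V_{i+2}^*$; the three required edges $a_kb'_{k+1}$, $b_ka'_{k+1}$, $a'_{k+1}b'_{k+1}$ each live in a single super-regular pair and are produced by three applications of regularity on the unused neighborhoods. The ladder then continues greedily inside the new pair with the flipped orientation.

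Summed over the $r$ clusters and $r-1$ transitions, the losses are $O(\epsilon l)$ from atypical vertices, $O(\sqrt{\epsilon}l/d)$ from the greedy tail inside each pair, and $O(1)$ from each transition rung. For $\epsilon$ small, $d\geq 10\sqrt{\epsilon}$, and $l\geq l_0$ large enough these combine to at most $5\sqrt{\epsilon}/d\cdot rl$, yielding $|L|\geq (1-5\sqrt{\epsilon}/d)rl$ as required. The step that most requires care is maintaining the inductive lower bound on the typical-neighborhood sizes across a flip rung—the three simultaneous constraints there leave little slack, so the parameters $\epsilon$, $d$, and $l_0$ must be tuned carefully to keep each of the three regularity applications valid.
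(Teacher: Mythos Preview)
There is a genuine gap in your transition step. You exhaust the pair $(V_i^*,V_{i+1}^*)$ greedily until only $O(\sqrt{\epsilon}\,l/d)$ vertices remain on each side, and then propose to ``continue greedily inside the new pair'' $(V_{i+1}^*,V_{i+2}^*)$. But $V_{i+1}^*$ has already been almost entirely consumed as the $B$-side of the previous pair, so at most $O(\sqrt{\epsilon}\,l/d)$ further rungs can be placed before $V_{i+1}^*$ is empty. Iterating, you obtain roughly $l$ rungs from $(V_1,V_2)$ and only $O(\sqrt{\epsilon}\,l/d)$ rungs from every subsequent pair, giving a ladder on about $2l+O(r\sqrt{\epsilon}\,l/d)$ vertices rather than $(1-5\sqrt{\epsilon}/d)rl$. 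The underlying issue is that your scheme tries to use each interior cluster $V_{i+1}$ twice---once as the right side of $(V_i,V_{i+1})$ and once as the left side of $(V_{i+1},V_{i+2})$---and there are simply not enough vertices for that.

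The paper avoids this by processing the clusters in \emph{disjoint} blocks $(V_1,V_2),(V_3,V_4),\dots$: each block is filled greedily, and the transition to the next fresh block uses a short two-rung bridge through $V_{2j}\times V_{2j+1}$ and $V_{2j+1}\times V_{2j+2}$, spending only $O(1)$ additional vertices from the nearly depleted $V_{2j}$. Your single flip rung is essentially half of this bridge; the fix is to land in $(V_{i+2},V_{i+3})$ rather than $(V_{i+1},V_{i+2})$, which in general requires two transition rungs, not one. Once the iteration is reorganized so that the greedy phases occupy disjoint pairs of clusters, the rest of your outline---the removal of atypical vertices, the Slicing-Lemma bookkeeping, and the invariant on unused-neighborhood sizes---matches the paper's argument.
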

  \begin{proof}
  We will construct $L$ in a step by step fashion. Initially, let $L:=\emptyset$ and let $k \in [2]$. We have $|N(x_k)\cap V_{3-k}| \geq dl > \epsilon l$ and so there exist $x_1', x_2'$ such that $x_k'\in N(x_k)$, $x_1'x_2'\in E$ and $|N(x_k')\cap V_{k}\setminus L|\geq dl - 1 \geq 2\sqrt{\epsilon}l$. For the general step, suppose $x_1\in V_1,x_2\in V_2$ are the endpoints of $L$ and $|N(x_k)\cap V_{3-k}\setminus L| \geq 2\sqrt{\epsilon} l $.
  Let $U_k:=V_{k} \setminus L$ and suppose $|U_k| \geq 5\sqrt{\epsilon} l/d$. Then, by Lemma \ref{slicing lemma}, $(U_k,N(x_k)\cap V_{3-k}\setminus L)$ is $\sqrt{\epsilon}$-regular with density at least $d/2$.  Thus all but at most $\sqrt{\epsilon}l$ vertices $v\in N(x_k)\cap V_{3-k}\setminus L$ have $|N(v) \cap U_k| \geq (\frac{d}{2}-\sqrt{\epsilon})|U_k| \geq 2\sqrt{\epsilon}l+1$. 
  Since $|N(x_k)\cap V_{3-k}\setminus L| \geq 2\sqrt{\epsilon} l$, there are $A_k \subset N(x_k)\cap V_{3-k}\setminus L$ such that $|A_k| \geq \sqrt{\epsilon}l$ and every vertex $v \in A_k$ has $|N(v) \cap U_k| \geq 2\sqrt{\epsilon}l+1$. Hence there exist $x'_1 \in A_1, x'_2 \in A_2$ such that $x_1'x_2' \in E$ and $|N(x'_k)\cap V_{k}\setminus (L \cup \{x_k\})| \geq 2\sqrt{\epsilon} l$ and we can add one more rung to $L$ from $V_1\times V_2$. To move from $(V_1,V_2)$ to $(V_3,V_4)$ suppose $L$ ends in $x_1\in V_1, x_2\in V_2$. 
  Pick $x_1'\in N(x_1) \cap V_{2}\setminus L$ so that $|N(x_2)\cap N(x_1')\cap V_3|\geq 2\sqrt{\epsilon}l$. Note that $|N(x_2)\cap V_3| \geq dl, |N(x_1) \cap V_2 \setminus L| \geq 2\sqrt{\epsilon}l$ and so $x_1'$ can be found in the same way as above. Next find $x_2', x_3 \in N(x_2)\cap N(x_1')\cap V_3$ such that $|N(x_2') \cap N(x_3) \cap V_4| > 0$, and finally let $x_4 \in N(x_2')\cap N(x_3)\cap V_4$. Then $\{x_3, x_4\} \in E, x_3 \in N(x_2) \cap N(x'_1) \cap V_3, x_4 \in N(x'_2) \cap V_4 $ and $|N(x_3) \cap (V_4 \setminus \{x_4\})|, |N(x_4) \cap (V_3 \setminus \{x'_2,x_3\})| \geq dl - 2 \geq 2\sqrt{\epsilon} l $.
  \end{proof}
  We will need the following observation.
  \begin{fact}\label{short-paths}
  Let $G$ be a 2-connected graph on $n$ vertices such that $\delta(G)\geq \alpha n, n > \frac{10}{\alpha^2}$ and let $U_1, U_2$ be two disjoin sets such that $|U_i|\geq 2$. 
  Then there exist two disjoint $U_1-U_2$ paths $P_1,P_2$ such that $|P_1|+|P_2| \leq \frac{10}{\alpha}$.
  \end{fact}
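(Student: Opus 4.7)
The plan is to combine Menger's theorem (via a standard auxiliary graph) with the diameter bound for graphs of large minimum degree.

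First, I would form the auxiliary graph $G^{\star}$ by adjoining two new vertices $s$ and $t$ to $G$, with $s$ joined to every vertex of $U_1$ and $t$ joined to every vertex of $U_2$. Since $G$ is $2$-connected and $|U_1|,|U_2|\ge 2$, a short check shows $G^{\star}$ remains $2$-connected, so Menger's theorem produces two internally vertex-disjoint $s$-$t$ paths in $G^{\star}$. Deleting $s$ and $t$ yields two vertex-disjoint $U_1$-$U_2$ paths $P_1,P_2$ in $G$; this settles existence.

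For the length bound I would use the classical Erd\H{o}s-Pach-Pollack-Tuza estimate: a connected graph on $n$ vertices with minimum degree $\delta$ has diameter at most $3n/(\delta+1)$. Applied to $G$, this gives $\mathrm{diam}(G)\le 3/\alpha$, so a shortest $U_1$-$U_2$ path has at most $3/\alpha+1$ vertices. Take $P_1$ to be such a shortest path. Since $|U_i|\ge 2$ and $V(P_1)$ contains exactly one vertex of each $U_i$, there exist $u_1'\in U_1\setminus V(P_1)$ and $u_2'\in U_2\setminus V(P_1)$. In $G':=G-V(P_1)$ the minimum degree is at least $\alpha n-(3/\alpha+1)\ge \alpha n/2$ (using $n\ge 10/\alpha^2$); when $u_1',u_2'$ lie in the same component of $G'$, the same diameter estimate inside that component yields a $u_1'$-$u_2'$ path $P_2$ with at most $6/\alpha+1$ vertices, hence $|P_1|+|P_2|\le 10/\alpha$.

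The main obstacle will be the situation where $u_1'$ and $u_2'$ fall into different components of $G'$, since $2$-connectivity of $G$ does not by itself prevent $V(P_1)$ from being a separator. To handle this case I would instead choose the pair $(P_1,P_2)$ so as to minimize $|P_1|+|P_2|$ over all vertex-disjoint $U_1$-$U_2$ pairs supplied by Menger in $G^{\star}$, and run an exchange argument: if the minimizer had $|P_1|>3/\alpha+1$, one could replace $P_1$ with a shorter $U_1$-$U_2$ path that avoids $V(P_2)$ by routing through the large component of $G-V(P_2)$ (which has minimum degree at least $\alpha n/2$ and therefore diameter at most $6/\alpha$); symmetrically for $P_2$. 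This reduces everything to the \emph{good} case already handled, so the minimum total length is at most $10/\alpha$.
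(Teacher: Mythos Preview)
Your exchange argument in the final paragraph is circular. You take a pair $(P_1,P_2)$ minimizing $|P_1|+|P_2|$ and then assert that $G-V(P_2)$ ``has minimum degree at least $\alpha n/2$''. That inequality requires $|V(P_2)|\le \alpha n/2$, but at this point in the argument you have no bound on $|V(P_2)|$ whatsoever; the minimality of the pair does not by itself force either path to be short. So when both $|P_1|$ and $|P_2|$ are large, your proposed shortcut for $P_1$ (which needs $|P_2|$ small) and your proposed shortcut for $P_2$ (which needs $|P_1|$ small) both fail, and the argument stalls. The earlier ``good case'' --- taking $P_1$ to be a shortest $U_1$--$U_2$ path and then routing $P_2$ through $G-V(P_1)$ --- is fine when it applies, but your fallback does not actually rescue the bad case.

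The paper avoids this circularity by working directly on the minimizing pair and using a local common-neighbour argument rather than a global diameter bound. It notes that each $P_i$ is induced, takes every third vertex of the longer path $P_2$, and counts: if no two of these share at least two common neighbours, their neighbourhoods (each of size at least $\alpha n-2$) are nearly disjoint and overflow $V(G)$. Hence some $x,y$ on $P_2$ with $\mathrm{dist}_{P_2}(x,y)>2$ have $|N(x)\cap N(y)|\ge 2$; a common neighbour outside $V(P_1)$ shortcuts $P_2$, while two common neighbours inside $V(P_1)$ let one reroute both paths to a strictly shorter pair. This argument never needs a prior bound on the other path, which is exactly the step your approach is missing.
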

  \begin{proof}
  Let $P_1, P_2$ be two $U_1-U_2$ paths such that $|P_1|+|P_2|$ is the smallest. Without loss of generality, $|P_1| \leq |P_2|$.
  Note that both paths are induced subgraphs and suppose $P_2 := v_1 \dots v_l$, $l > 5/\alpha$.
  Let $A = \{v_{3i} : i \in [ \frac{l}{3}] \}$. If for any $x,y \in A$, $|N(x) \cap N(y) | \leq 1$ then 
  $$|\cup_{v \in A} N(v) \cap (V \setminus V(P_2)) | \geq \sum_{i = 0}^{|A|} \max \{ (\alpha n - 2 - i) , 0 \} > n,$$
  a contradiction.
  Hence there exist two vertices $x,y$ in $P_2$ such that $dist_{P_2}(x,y)>2$ and $|N_G(x)\cap N_G(y)|\geq 2$. 
  Then $N_G(x)\cap N_G(y) \cap (V\setminus V(P_1))=\emptyset$ or we get a shorter $U_1-U_2$. Thus $|N_G(x) \cap N_G(y) \cap V(P_1)| \geq 2$ and we again get shorter disjoint $U_1-U_2$ paths.
  \end{proof}
  As our last fact in this section we will show that a component in our graph either contain two disjoint paths of total length much bigger than its minimum degree or the component has a very specific structure.
  \begin{theorem} \label{no second path}
  Let $C$ be a component in a graph $G$ which satisfies $|C| \geq 2\delta(G)$. If $G[C]$ does not contain a Hamiltonian path then 
  either there exists a path $P_1$ such that for any $v \in V(C) \backslash V(P_1)$, $N(v) \subset V(P_1)$ or there exist two disjoint paths $P_1,P_2$ such that $|V(P_1)|+|V(P_2)| > 3\delta(G)$.
  \end{theorem}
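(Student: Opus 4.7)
Set $\delta := \delta(G)$. My plan is to take $P_1 = x_0 x_1 \cdots x_k$ to be a longest path in $G[C]$ and carry out a case analysis. The extremality of $P_1$ forces $N(x_0), N(x_k) \subseteq V(P_1)$, so $|V(P_1)| \geq \delta + 1$; and because $G[C]$ has no Hamilton path, $V(C) \setminus V(P_1) \neq \emptyset$. If every $v \in V(C) \setminus V(P_1)$ satisfies $N(v) \subseteq V(P_1)$, this $P_1$ witnesses the first alternative and the proof is complete.

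Otherwise some edge $uv$ lies inside $V(C) \setminus V(P_1)$. Let $D$ be the component of $G[V(C) \setminus V(P_1)]$ containing $u$ and take $P_2 = y_0 y_1 \cdots y_l$ to be a longest path in $D$, with endpoints $a := y_0$, $b := y_l$ and $|V(P_2)| = l + 1 \geq 2$. Since $P_2$ is longest in $D$ and $D$ is a component, $N(a), N(b) \subseteq V(P_1) \cup V(P_2)$. The key structural constraints come from splicing $P_2$ into $P_1$: if $a \sim x_i$ then $x_0 \cdots x_i \, a y_1 \cdots y_l$ has $i + l + 2$ vertices, so extremality of $P_1$ forces $l+1 \leq i \leq k-l-1$; and if additionally $b \sim x_j$ with $i \neq j$, then $x_0 \cdots x_i \, a \cdots b \, x_j \cdots x_k$ is a single path on $k + l + 3 - |i-j|$ vertices, forcing $|i-j| \geq l+2$. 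Thus $A := N(a) \cap V(P_1)$ and $B := N(b) \cap V(P_1)$ both sit in the window $\{x_{l+1},\ldots,x_{k-l-1}\}$ with this distance condition between their distinct elements.

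I would then branch on $|V(P_2)|$. When $|V(P_2)| \geq 2\delta$, already $|V(P_1)| + |V(P_2)| \geq (\delta+1) + 2\delta > 3\delta$. When $1 \leq l \leq \delta - 1$, the bound $|N(a) \cap V(P_2)| \leq l$ gives $|A|, |B| \geq \delta - l$, and the distance condition forces the elements of $A \cup B$ (classified as \emph{Red} if in $A \setminus B$, \emph{Blue} if in $B \setminus A$, \emph{Purple} if in $A \cap B$) to decompose into monochromatic blocks with at least $l+1$ uncolored positions between any two differently-labeled blocks and between any two Purples. A direct packing argument on the window of size $k - 2l - 1$ then yields $|V(P_1)| \geq (\delta - l)(l+2) + l + 1$, whence $|V(P_1)| + |V(P_2)| > 3\delta$ throughout this range.

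The main technical obstacle is the intermediate range $\delta \leq l \leq 2\delta - 2$, where $\delta - l \leq 0$ makes the above neighborhood lower bound vacuous. Here I would invoke P\'osa rotations on $P_1$: the set $L$ of possible left-endpoints of longest paths supported on $V(P_1)$ has $|L| \geq \delta$ and every $w \in L$ satisfies $N(w) \subseteq V(P_1)$, and similarly for the set $R$ of right-endpoints. Assuming for contradiction $|V(P_1)| \leq 2\delta - 1$ yields $|L \cap R| \geq 1$ by pigeonhole; iterated rotation combined with the presence of the long path $P_2$ in $V(C) \setminus V(P_1)$ should then produce a path longer than $P_1$, contradicting extremality. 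Hence $|V(P_1)| \geq 2\delta$, and combined with $|V(P_2)| \geq \delta + 1$ this gives $|V(P_1)| + |V(P_2)| \geq 3\delta + 1$. The careful rotation analysis in this last range is where the bulk of the work lies.
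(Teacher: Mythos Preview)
Your plan heads in a workable direction but is far more elaborate than needed, and the intermediate-range rotation sketch is genuinely incomplete: the assertion that the set $L$ of rotated left-endpoints has size at least $\delta$ is not a standard consequence of P\'osa's lemma, and even granting it, $|L\cap R|\ge 1$ does not by itself produce a longer path or a near-spanning cycle of $V(P_1)$ --- you would still need an adjacency between a left-endpoint and a right-endpoint, which pigeonhole alone does not supply. So as written, the hardest case is not actually closed.

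The paper's argument bypasses all of your case analysis with one idea you did not try: instead of studying how the endpoints of $P_2$ splice into $P_1$, study which vertices of $P_1$ the endpoint $u_1$ of $P_2$ is \emph{forbidden} from seeing, using the rotation structure at the endpoints $v_1,v_r$ of $P_1$. Concretely, set
\[
A=\{i: v_i\in N(v_1)\},\quad A^-=\{i-1:i\in A\},\qquad B=\{i:v_i\in N(v_r)\},\quad B^+=\{i+1:i\in B\}.
\]
If $A^-\cap B^+\neq\emptyset$ then $G[V(P_1)]$ contains a cycle of length at least $|V(P_1)|-1$, and attaching $P_2$ to it yields a path longer than $P_1$; hence $A^-\cap B^+=\emptyset$ and $|A^-\cup B^+|\ge 2\delta$. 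Now maximality of $P_1$ forces $N(u_1)\cap(A^-\cup B^+)=\emptyset$ (if $u_1\sim v_j$ with $j\in A^-$, rotate at $v_{j+1}$ and prepend $P_2$), while maximality of $P_2$ gives $N(u_1)\subseteq V(P_1)\cup V(P_2)$. Therefore
\[
\delta \le d(u_1) \le \bigl(|V(P_1)|-2\delta\bigr)+\bigl(|V(P_2)|-1\bigr),
\]
which rearranges to $|V(P_1)|+|V(P_2)|\ge 3\delta+1$ in one line, uniformly in $|V(P_2)|$. The moral: pointing the degree bound at $u_1$ rather than at $a,b$ lets the classical Ore-type shift sets $A^-,B^+$ do all the work, and no packing or rotation analysis is needed.
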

  \begin{proof}
  Let $P_1$ be a maximum path in $C$, say $P_1 = v_1, \ldots ,v_r$. If $P_1$ is a Hamiltonian path or $G[V(C) \backslash V(P_1)]$ is independent then we are done; thus we may assume that there exists a path in $G[V(C) \backslash V(P_1)]$, say $P_2 = u_1, \dots, u_s$ such that $s \geq 2$.
  Let
  \begin{align*}
  A = \{i : v_i \in N(v_1) \cap V(P_1)\}&, A^{-} = \{i-1 : i \in A\},\\
  B = \{i : v_i \in N(v_r) \cap V(P_1)\}&, B^{+} = \{i+1 : i \in B\} .
  \end{align*}
  If $G[V(P_1)]$ contains a cycle of length at least $|V(P_1)| - 1$ then it gives a longer path by attaching $P_2$ to the cycle. Therefore,
  $$A^{-} \cap B^{+} = \emptyset,$$
  which implies that
  $$|A^{-} \cup B^{+}| \geq 2 \delta(G).$$
  By the maximality of $P_2$,
  \begin{align*}
  N(u_1) \subset V(P_2) \cup V(P_1).
  \end{align*}
  By the maximality of $P_1$,
  $$ N(u_1) \cap (A^{-} \cup B^{+}) = \emptyset. $$
  Therefore, 
  $$\delta(G) \leq d(u_1) \leq r- 2\delta(G) + s-1 ,$$
  which implies that 
  $$ |V(P_1)| + |V(P_2)| = r + s \geq 3\delta(G) + 1.$$
  \end{proof}

  \section{The first non-extremal case}\label{sec:nonextr1}
  In this section we will address the case when $G$ is non-extremal and  $\alpha n\leq \delta(G) \leq (1/2-\gamma) n$ for some $\alpha, \gamma >0$. In order to do this, of course, we must first define what it means to be extremal.
  \begin{definition} \label{def extremal}
  Let $G$ be a graph with $\delta(G) = \delta$.
  We say that $G$ is {\emph{$\beta$-extremal}} if there exists a set $B \subset V(G)$ such that
   $|B| \geq (1-\delta/n-\beta)n$ and all but at most $4\beta n$ vertices $v\in B$ have $|N(v) \cap B| \leq \beta n$.
  \end{definition}
  Then the main theorem in this section follows.
  \begin{theorem} \label{non extremal main}
  Let $\alpha, \gamma \in (0,\frac{1}{2})$ and let $\beta>0$ be such that $\beta < (\frac{\alpha}{400})^2 \leq \frac{1}{640000}$. Then there exists $N(\alpha,\gamma)\in \mathbb{N}$ such that for all $n \geq N$ the following holds.
  For every 2-connected graph $G$ on $n$ vertices with $\alpha n\leq \delta(G) \leq (1/2-\gamma) n$ which is not $\beta$-extremal and every $n_1, \dots, n_l\geq 2$ such that $\sum n_i = \delta$
  \begin{itemize}
  \item[(i)] $G$ contains disjoint cycles $C_{2n_1}, C_{2n_2}, \dots, C_{2n_l}$ or
  \item[(ii)] $\delta$ is even, $n_1=n_2=\frac{\delta}{2}$ and $G$ is one a graph from Example \ref{ex1}.
  \end{itemize}
  \end{theorem}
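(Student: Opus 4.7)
The plan is to apply the regularity method to find a long ladder or weak ladder in $G$, then invoke Lemma \ref{weak ladder criteria - 1} or Corollary \ref{weak ladder criteria - 2} to extract the requested cycles. First I would fix constants $0 < \epsilon \ll d \ll \beta \ll \alpha, \gamma$, apply Lemma \ref{regularity} to obtain an $\epsilon$-regular partition $V_0, V_1, \ldots, V_t$, and form the reduced graph $R = R_{\epsilon, d}$. A standard counting argument shows that, after discarding at most $O(\sqrt{\epsilon})\,t$ bad clusters, the remaining reduced graph $R'$ has minimum degree at least $(\delta(G)/n - o(1))\,t$. Since $\delta(G) < (1/2 - \gamma)n$, $R'$ has at least $(2 + 2\gamma - o(1))\delta(R')$ vertices, and the hypothesis that $G$ is not $\beta$-extremal forbids $R'$ from having an almost-independent set of size close to $(1 - \delta(G)/n)t$.

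Next I would apply Theorem \ref{no second path} to the (essentially unique) large component of $R'$. Three outcomes are possible: a near-Hamiltonian path of the component; a path $P_1$ such that every vertex outside $P_1$ has neighbors only on $P_1$ (which, combined with 2-connectivity and non-extremality of $G$, is either impossible or still yields a long cluster-path after reattaching the pendants); or two disjoint paths with combined length exceeding $3\,\delta(R')$. In the first two cases, I would use Lemma \ref{regular-graph} to upgrade the cluster-path to a super-regular cluster-path of length at least $2\,\delta(R') + O(1)$, and then Lemma \ref{ladder} embeds a ladder $L_m$ in $G$ with $m \geq \delta(G) + k$ for a small parity-correction $k$ obtained via Fact \ref{short-paths}. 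Lemma \ref{weak ladder criteria - 1} then extracts the requested cycles. In the two-disjoint-paths case, each path yields a ladder; a short $G$-connection via Fact \ref{short-paths} together with Corollary \ref{attaching two components} fuses them into a single $(n', k)$-weak ladder, and Corollary \ref{weak ladder criteria - 2} handles the extraction.

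Corollary \ref{weak ladder criteria - 2} succeeds on every admissible partition except the symmetric case $l = 2$, $n_1 = n_2 = \delta(G)/2$. For this remaining case I would argue directly that failure to find two disjoint copies of $C_{\delta(G)}$ forces any two candidate copies to share a small vertex cut. Using 2-connectivity together with the lower bound $\delta(G) \geq \alpha n$, this cut must have size exactly three; the non-extremal hypothesis then forces the rest of $G$ to decompose into (near-)cliques attached fully to the three cut vertices, giving the precise structure of Example \ref{ex1}.

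The hardest part will be this concluding structural step: bridging from \emph{the ladder argument fails in the symmetric case} to the exact description of Example \ref{ex1}. A priori, such a failure could also correspond to near-bipartite blow-up structures (as in Example \ref{ex2}); those are handled separately in the extremal section, so the non-extremal hypothesis must be used decisively both to rule them out and to pin down the cut size as well as the clique structure of the two pieces. A secondary technical obstacle is the careful bookkeeping of parity (even-cycle constraint) when connecting ladders, which is why Lemmas \ref{weak ladder of case r = 1} and \ref{weak ladder criteria - 2} are set up to tolerate a bounded number of ``extra'' rungs $k$.
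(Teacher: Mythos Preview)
Your plan has a genuine structural gap: you assume the reduced graph $R$ has an ``essentially unique large component,'' but nothing in the hypotheses forces this. With $\delta(R)\ge(\delta/n-2d_1)t$ and $\delta(G)$ as small as $\alpha n$, the reduced graph can split into as many as roughly $1/\alpha$ components, each of size close to $(\delta/n)t$. The paper's proof is organized around exactly this dichotomy: after handling connected $R$ (Lemmas~\ref{long path}, \ref{connected R}, \ref{done-or-extremal-claim}), most of the work goes into the disconnected cases, treated separately according to whether some component is non-bipartite (Lemma~\ref{no bipartite}), all components are bipartite (Lemma~\ref{all bipartite}), or all components are small of size at most $(\delta/n+3d_1)t$ (Lemma~\ref{small components}). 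Your outline covers only something like the connected case.

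This matters most for the exceptional outcome~(ii). Example~\ref{ex1} does \emph{not} arise, as you suggest, from a failed symmetric $C_{\delta}\cup C_{\delta}$ embedding inside a single long ladder; it arises precisely in the small-components case of Lemma~\ref{small components}. There each component $D$ satisfies $|V_D|\approx\delta$, so $G[V_D]$ is $\tau$-complete (a near-clique), and a delicate vertex-redistribution argument between components shows that either one can assemble a sufficiently long weak ladder across two or three components, or else every component has exactly $\delta-2$ vertices and all cross-edges go through a fixed triple $\{x,y,z\}$, which is Example~\ref{ex1}. Your proposed direct argument (``failure forces a size-three cut, then non-extremality forces cliques'') is the right intuition but skips the substantial combinatorics needed to control how the components interact; in particular, Corollary~\ref{attaching two components} requires $\tau$-completeness, which is only available after one has already established the small-components structure, so you cannot invoke it in the general two-paths scenario.

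A secondary issue: in the connected case, the paper does not use Theorem~\ref{no second path} to produce two disjoint paths and then fuse them. Rather, it shows (Lemma~\ref{connected R}) that either $R$ contains a long $T$-graph (giving a weak ladder via Lemma~\ref{long path}) or $R$ has a large independent set, which translates directly into $G$ being $\beta$-extremal. So in the connected, non-extremal case one always gets the ladder, and outcome~(ii) never occurs there.
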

  \begin{proof}
  Fix constants $d_1 := \min \left\{\frac{\alpha^6}{10^{10}}, \frac{\gamma}{10} , \beta^2 \right\} , d_2:= \frac{d_1}{2}$ and let $\epsilon_1,\epsilon_2,\epsilon_3$ be such that $\epsilon_1 < 300 \epsilon_1 <\epsilon_2 <  \epsilon_2^{1/4} < \frac{\epsilon_3}{10} < 10 \epsilon_3 < d_2$. 
  Applying Lemma \ref{regularity} with parameters $\epsilon_1$ and $m$, we obtain our necessary $N = N(\epsilon_1,m)$ and $M = M(\epsilon_1,m)$. Let $N(\alpha) = \max \left\{N, \left\lceil \frac{100M}{\alpha \epsilon_3} \right\rceil \right\}$ and let $G$ be an arbitrary graph with $|G| = n \geq N(\alpha)$ and $\delta = \delta(G) \geq \alpha n$.
  By Lemma \ref{regularity} and some standard computations, we obtain an $\epsilon_1$-regular partition $\{V_0, V_1, ..., V_t \}$ of $G$ with $t\in[m,M]$, $|V_0|\le\epsilon_1 n$ and such that there are at most $\epsilon_1 t$ pairs of indexes $\{i,j\}\in \binom{[t]}{2}$ such that $(V_i, V_j)$ is not $\epsilon_1$-regular.
  
  Let $l := |V_i|$ for $i \geq 1$ and note that
  \begin{align*}
  (1-\epsilon_1)\frac{n}{t} \leq l \leq \frac{n}{t}.
  \end{align*}
  
  Now, let $R$ be the cluster graph with threshold $d_1$, that is, given $\{V_0, V_1, ..., V_t \}$ as above,
  $V(R) = \{V_1, \dots, V_t\}$ and $E(R) = \{ V_iV_j : (V_i,V_j) \text{ is } \epsilon_1 \text{-regular with } d(V_i,V_j) \geq d_1  \}$. In view of the definition of $\epsilon_1$ and $d_1$ we have the following,
  \begin{equation}
  \label{minimum degree of cluster graph}
  \delta(R) \geq (\delta/n - 2d_1)t
  \end{equation}
  
  \begin{lemma} \label{long path}
  Let $C$ be a component in $R$ which contains a T-graph $H$ with $|H| \geq (2\delta/n + \epsilon_3)t$. 
  Then $G$ contains a $(n',r)-$weak ladder where $n' \geq \delta + r$.
  \end{lemma}
  \begin{proof}
  Since $\Delta(H) \leq 3$, by Lemma \ref{regular-graph} applied to $H$ there exist  subsets $V_i'\subseteq V_i$ for every $V_i \in V(H)$ such that $(V_i', V_j')$ is $(\epsilon_2, d_2)$-super-regular for every $V_iV_j\in H$ and $$ \left| V_i' \right| \geq (1-\epsilon_2)l. $$
  
  Let $P=U'_1, \dots, U'_s$, $Q= U'_i,W'_1, \dots, W'_r$ denote the two paths forming $H$. Note that if $i + r \geq (2\delta/n +\epsilon_3)$, then $G[\bigcup_{j=1}^{i} U_j' \cup \bigcup_{j=1}^{r}W_j' ]$ contains a ladder on $m$ vertices where
   $$ m \geq (2\delta/n +\epsilon_3)(1-\epsilon_2)(1-\epsilon_1)n \geq 2\delta.$$
  Otherwise, let $x \in U'_{i+1}, y\in U'_{i+2}$. There is an $x,z$-path $P$ on $r+1$ vertices for some $z\in W'_{r-1}$ and a $y,w$-path $Q$ on $r+1$ vertices for some $w\in W'_{r-2}$ which is disjoint from $P$. By Lemma \ref{ladder},
  there is a ladder $L'$ on  $(i+r)(1-\epsilon_2)(1-5\sqrt{\epsilon_2}/d_2)l$ vertices in $G[U'_1\cup \cdots U'_i \cup W'_1\cup \dots W'_r]$ which ends at $z' \in N(z) \cap W'_{r}$ and $w' \in N(w)\cap W'_{r-1}$ and a ladder $L''$ on $(s-i) (1-\epsilon_2)(1-5\sqrt{\epsilon_2}/d_2)l$ vertices in  $G[U'_{i+2}\cup U'_s]$ which ends at $x' \in N(x)\cap U'_{i+2}$ and $y' \in N(y)\cap U'_{i+3}$
  such that $L \cap (P \cup Q), L' \cap (P \cup Q) = \emptyset$.
  Then $|L'| + |L''| \geq  (2\delta/n + \epsilon_3)t(1-\epsilon_2)(1-5\sqrt{\epsilon_2}/d_2)l \geq 2\delta + \frac{\epsilon_3 n}{2}, |P|=|Q|=r+1$ and $\frac{\epsilon_3 n}{4} - (r+1) \geq 0 $. 
  Thus $L_1 \cup P' \cup Q' \cup L_2$ contains a $(n',r+1)-$weak ladder where $P' = x'Pz', Q' = y'Qw'$ and $n' - (r+1) \geq \delta.$
  \end{proof}
  
  \begin{lemma} \label{connected R}
  Let $C$ be a component in $R$ and suppose $|C|\geq (2 \delta/n +\epsilon_3)t$. Then either there is a T-graph $H$ such that $|H| \geq (2\delta/n + \epsilon_3)t$, or there is a set $\mathcal{I} \subset V(C)$ such that $|\mathcal{I}| \geq |C| - (\delta/n +8d_1)t$ and $||R[\mathcal{I}]|| = 0$.
  \end{lemma}
  \begin{proof}
  Let $P_1=V_1, \dots, V_s$ be a path of maximum length in $C$ and subject to this  is such that
  $||R[V(C) \setminus V(P_1)]||$ is maximum. By Theorem \ref{no second path}  we may assume that
  $s < (2\delta/n + \epsilon_3)t$ and that for any $W \in V(C) \backslash V(P_1)$, $N(W) \subset V(P_1)$
  (i.e. $||R[V(C) \setminus V(P_1)]||=0$). Let $W \in V(C) \backslash V(P_1)$ be arbitrary and let
  \begin{align*}
  \mathcal{W}&= \{i \in [s] : V_i \in N(W) \},\\
   \mathcal{W}^{+} &= \{i \in [s] : i-1 \in \mathcal{W}, i+1 \in \mathcal{W} \},\\
   \mathcal{W}^{++} &= \{i \in [s] : i \in \mathcal{W}, i-1,i-2 \notin \mathcal{W} \}.
  \end{align*}
  Since $P_1$ is a longest path, $\mathcal{W}\cap \mathcal{W}^{+}=\emptyset$.
  
  In addition, note that $|\mathcal{W}^{+}| + |\mathcal{W}^{++}| + 1 = |N_R(W)|$. As a result, if $|\mathcal{W}^{+}| = (\delta/n - C d_1)t, C \geq 7$ then $|\mathcal{W}^{++}| \geq (C - 2)d_1 t$. But then,
  \begin{align*}
  |V(P_1)| &\geq 2|\mathcal{W}^{+}| + 3 |\mathcal{W}^{++}| \\ 
       &\geq 2(\delta/n - C d_1)t + 3\cdot (C - 2) d_1 t \\ 
       &\geq (2\delta/n + (C - 6)d_1) t > 2(\delta/n + \epsilon_3) t > |V(P_1)|.
  \end{align*}
  Thus we may assume that $|\mathcal{W}^{+}| > (\delta/n - 7 d_1)t$.
  Let $\mathcal{I} := \{V_i| i \in \mathcal{W}^{+} \}\cup (V(R) \backslash V(P_1)) .$ Then $|\mathcal{I}| \geq |C|- (|V(P_1)|-|\mathcal{W^+}|) \geq |C|- (\delta/n + 8 d_1)t$. We will  show that $\mathcal{I}$ is an independent set in $R$.
  Clearly $V(C) \setminus V(P_1)$ is independent.  Suppose there is $W' \in V(C) \backslash V(P_1)$ such that  for some $i \in \mathcal{W^+}$, $V_i\in N_R(W')$.  Let $P_1'$ be obtained from $P_1$ by  exchanging $V_i$ with $W$ and note that the length of $P_1'$ is equal to the length of $P_1$ but $||R[V(C) \setminus V(P_1')]||\neq 0$ contradicting the choice of $P_1$.
  Now suppose $V_iV_j \in R$ for some $i,j\in \mathcal{W}^+$, with $i<j$. Then $P_1':=V_sP_1V_{j+1}WV_{i+1}P_1V_jV_iP_1V_1$ is a longer path.
  \end{proof}
  
  In the following lemma, we show that for graphs whose reduced graphs are connected, either the graph contains a $\delta$-weak ladder, hence it includes the claimed number of cycle lengths, or again it is very nearly our extremal structure.
  
  \begin{lemma}\label{done-or-extremal-claim}
  If $R$ is connected, then either $G$ contains a $(n', r)-$weak ladder where $n' \geq \delta + r$, or there exists a set $V' \subset V$ such that $|V'| \geq (1-\delta/n - \beta) n$, such that all but at most $4 \beta n$ vertices $v \in V'$ have
  $|N_{G'}(v)| \le \beta n$ where $G' = G[V']$.
  \end{lemma}
  \begin{proof}
  Since $2\delta/n + \epsilon_3 \leq 2(1/2 - \gamma) + \epsilon_3 \leq 1$, By Claim \ref{connected R} and Claim \ref{long path}, we may assume that there is
  $I \subset V(R)$ such that $|I| \geq |C| - (\delta/n +8d_1)t= (1- \delta/n -8d_1)t$ and $||R[I]|| = 0$.
  Let $V' = \cup_{X \in I} X$. Then
  \begin{align*}
  |V'| = l|I| \geq l (1 - \delta/n - 8d_1)t  \geq (1 - \delta/n - 9d_1)n\geq (1-\delta/n -\beta)n.
  \end{align*}
  Let $W = \{w \in V' : |N_{V'}(w)| \geq \sqrt{d_1}n \}$. We claim that $|W| < 4\sqrt{d_1}n \leq 4\beta n$. Suppose  otherwise. Then we have
  \begin{align*}
  ||G[V']|| \geq \frac{4\sqrt{d_1}n \cdot \sqrt{d_1}n}{2} = 2 d_1 n^2.
  \end{align*}
  which implies that there is at least one edge in $R[I]$. Indeed, there are at most $\epsilon_1 t^2 l^2 \leq \epsilon_1 n^2$ edges in irregular pairs, at most $d_1 t^2 l^2\leq d_1 n^2$ edges in pairs $(A,B)$ with $d(A,B)\leq d_1$, and at most $t \binom{l}{2} < \epsilon_1 n^2$ edges in $\bigcup_{i\geq 1} G[V_i]$.
  \end{proof}
  
  Thus from Lemma \ref{done-or-extremal-claim} we are either done or there is a set $V' \subset V$ such that $|V'| \geq (1-\alpha - \beta)n$, such that all but at most $4 \beta n$ vertices $v\in V'$ have
  $|N_{G'}(v)|\le \beta n$. The latter case will be addressed in the section which contains the extremal case.
  
  However, we are not done yet with the non-extremal case because $R$ can be disconnected. Indeed, it is this part of the argument which requires careful analysis and uses the fact that $G$ is 2-connected. We will split the proof into lemmas based on the nature of components in $R$ and will assume
  in the rest of the section  that $R$ is disconnected.
  
  \begin{lemma}\label{no bipartite}
  If $R$ is disconnected and contains a component $C$ which is not bipartite and a component $C'$ such that $|C'| > (\delta/n +3d_1)t$ then $G$ contains a $(n',r)-$weak ladder for some $n' \geq \delta + r$.
  \end{lemma}
  \begin{proof} Note that $C$ and $C'$ can be the same component. Let $C,C'$ be two components such that $|C|+|C'|\geq (2\delta/n +d_1)t$ and suppose $C$ in not bipartite path. 
  Then there exist path $P=V_1,\dots, V_s$ in $C$  and $Q=U_1, \dots, U_r$ in $C'$  such that $|P|+|Q| \geq (2\delta/n + d_1)t$. In addition, $C$ contains an odd cycle $B$.
  
  Let $\bar{P}$ be obtained from $P$ be applying Lemma \ref{regular-graph} and let $\bar{Q}$ be obtained from $Q$ by applying Lemma \ref{regular-graph} and let $V_1',\dots, V_s', U_1', \dots, U_r'$ denote the modified clusters.
  Let $U_1:= \bigcup_{V\in \bar{P}} V$, $U_2:= \bigcup_{V \in \bar{Q}} V$. Since $G$ is 2-connected, from Fact \ref{short-paths}, there exist two disjoint $U_1-U_2$ paths $Q_1,Q_2$ in $G$ such that $|Q_1|+|Q_2| \leq \frac{10}{\alpha}$. Let $\{x_k,y_k\} = (V(Q_1) \cup V(Q_2) \cap U_k$.
  We will extend $Q_1, Q_2$ to paths $Q_1', Q_2'$, so that $Q_1'\cap Q_2' =\emptyset$, the endpoints of $Q_1'$ are in $U_1'$, $V_1'$,
  the endpoints of $Q_2'$ are in $U_2'$, $V_2'$ and $|Q_1'|=|Q_2'|\leq K$ for some constant $K$ which depends on $\alpha$ only.
  For $C'$ we simply find short paths from $x_2,y_2$ to $U_1'$, $U_2'$, that is, let $x_2'\in U_1'$, $y_2'\in U_2'$ and find paths $S_1,S_2$ so that $S_1\cap S_2=\emptyset$, $S_1$ is an $x_2',x_2$-path, $S_2$ is a $y_2',y_2$-path, $|S_i| \leq r$ and $||S_1| -|S_2||\leq 1$.
  Let $S_i':= S_i\cup Q_i$. Note that $|S_i'| \leq r + \frac{10}{\alpha}$ but the paths can have different lengths. Let $R_1$ be a path in $G[C]$ on at most $|C|$ vertices from $x_1$ to a vertex $x_1' \in V_1'$ which does not intersect $S_1'$. Note that for every $V\in C$, $|V\cap (S_1'\cup S_2'\cup R_1)|$ is a constant and so if $(V,W)$ is $(\epsilon, d)$-super-regular then $(V \setminus (S_1'\cup S_2'\cup R_1),  W \setminus (S_1'\cup S_2'\cup R_1) )$ is $(2\epsilon, d/2)$-super-regular.
  Consequently, using the fact that $C$ contains an odd cycle, it is possible to find a path $R_2$ from $y_2$ to a vertex $y_2'\in V_2$ so that $|R_2|\leq |C|$, $R_2 \cap (S_1'\cup S_2'\cup R_1) =\emptyset$, and $|R_1|+|S_1'|,|R_2|+|S_2'|$ have the same parity. If $|R_1|+|S_1'|>|R_2|+|S_2'|$, then use $(V_1',V_2')$ to extend $R_2$ so that the equality holds. Let $Q_1', Q_2'$ be the resulting paths. Note that
  $|Q_1'|+|Q_2'|$ is constant and since $|P|+|Q| \geq (2\delta/n + d_1)t$ we can find two ladders $L_{n_1}$ in $G[P]$, $L_{n_2}$ in $G[Q]$ such that $n_1+n_2 \geq \delta + d_1 n/4$, $(L_1\cup L_2) \cap (Q_1'\cup Q_2') =\emptyset$ and such that $L_i$ ends in $N(x_i'),N(y_i')$.
  \end{proof}
  Next we will address the case when all components are bipartite.
  
  \begin{lemma} \label{all bipartite}
  If $R$ is disconnected and every component is bipartite, then $G$ contains either $L_{\delta}$ or a $(n',r)-$weak ladder for some $n',r$ such that $n' \geq \delta + r$.
  \end{lemma}
  \begin{proof}
  Let $\xi:= 20d_1/\alpha^2$, $\tau := 20\sqrt{d_1}/ \alpha^2$ and let $q$ be the number of components in $R$ and let $D$ be a component in $R$. Then $D$ is bipartite and so $|D|\geq 2\delta(R)\geq 2(\delta/n-2d_1)t$. 
  Thus, in particular, $q \leq 1/(2(\delta/n-2d_1))\leq n/\delta$.
  
  For a component $D$ in $R$, if $|D| \geq (2\delta/n +\epsilon_3)t$, then by Lemma \ref{connected R} and Lemma \ref{long path}, we may assume that there is an independent set $I \subset V(D)$ such that $|I| \geq |D| - (\delta/n + 8d_1)t$.
  Suppose components are $D_1, D_2, \dots, D_q$ and $D_i$ has bipartition $A_i,B_i$ such that $|A_i| \leq |B_i|$. Then, we have
  $$ (\delta/n -2d_1)t \leq |A_i| \leq (\delta/n +8d_1)t$$
  and $|B_i|\geq (\delta/n - 2d_1)t$.
  Let $X_i:= \bigcup_{W\in A_i}W, Y_i:= \bigcup_{W\in B_i}W$ and $G_i:=G[X_i, Y_i]$. Then
  $$ \delta -3d_1n \leq |X_i| \leq \delta + 8d_1n$$
  and $$\delta -3d_1n \leq |Y_i|.$$
  In addition, since $B_i$ is independent in $R$,
  $||G_i|| = e(X_i,Y_i) \geq \delta |Y_i| - 2d_1 n^2\geq |X_i||Y_i| - 10d_1n^2 \geq (1-\xi)|X_i||Y_i|.$
  
  Let $X_i':=\{x \in X_i| |N_G(x)\cap Y_i| \geq (1-\sqrt{\xi})|Y_i|\}$ and note that $|X_i'| \geq (1- \sqrt{\xi})|X_i| \geq \frac{2 \delta}{3}$. Similarly let $Y_i':=\{y\in Y_i| |N_G(y) \cap X_i| \geq (1-\sqrt{\xi})|X_i|\}$ and note that $|Y_i'| \geq (1-\sqrt{\xi})|Y_i| \geq \frac{2 \delta}{3}$. Let $G':=G[X_i',Y_i']$ and note that
  for every vertex $x \in X_i'$,
  \begin{equation}\label{eq-deg}
  |N_G(x) \cap Y_i'| \geq (1- 2\sqrt{\xi})|Y_i'| ,
  \end{equation}
   and the corresponding statement is true for vertices in $Y_i'$. 

  Let $V_0':= V_0 \cup \bigcup_i((X_i\setminus X_i') \cup (Y_i\setminus Y_i'))$ and note that $|V_0'| \leq (2\sqrt{\xi} +\epsilon_1)n \leq 3\sqrt{\xi}n.$
  Then for every vertex $v \in V_0'$ we have $|N_G(v) \cap (V(G)\setminus V_0')| \geq \delta/2$. Thus, since the number of components is at most $n/\delta$, for every $v\in V_0'$ there is $i\in [q]$ such that $|N_G(v)\cap X_i'| +|N_G(v) \cap Y_i'| \geq \delta^2/(2n)$ and we assign $v$ to $Y_i'$ ($X_i'$) if $|N_G(v)\cap X_i'| \geq \delta^2/(4n)$ $(|N_G(v) \cap Y_i'| \geq \delta^2/(4n))$ so that every $v$ is assigned to exactly one set.
  Let $X_i''$, ($Y_i''$) denote the set of vertices assigned to $X_i'$ ($Y_i'$) and let $V_i':= X_i'\cup X_i''\cup Y_i'\cup Y_i''$.
  
  First assume that there exists $i$ such that $\min\{|X_i'\cup X_i''|, |Y_i'\cup Y_i''|\} \geq \delta$.
  If $|X_i'|, |Y_i'| \leq \delta$ then by removing some vertices from $X_i'' \cup Y_i''$, we get $|X_i' \cup X_i''| = |Y_i' \cup Y_i''| = \delta$ 
  and by Lemma \ref{almost complete bipartite}, we obtain $L_{\delta}$.
  If $|X_i'| > \delta$ then choose $Z_i \subset X_i'$  such that $|Z_i| = \delta$ and then for every vertex $y \in Y_i',$ 
  $$|N_G(y) \cap Z_i | \geq |Z_i| - 2\sqrt{\xi}|X_i'| \geq (1- 2 \cdot \frac{2}{\alpha} \sqrt{\xi})|Z_i| \geq (1-\tau)\delta, $$
  If $|Y_i'| > \delta$ then the same is true for vertices $x \in X_i' $. 
  Hence if $|X_i'|, |Y_i'| \geq \delta$ then we can choose $Z_i \subset X_i', W_i \subset Y_i'$ such that $|Z_i| = |W_i| = \delta$ 
  and for any $x \in Z_i, y \in W_i$, $|N(x) \cap W_i|, |N(y) \cap Z_i| \geq (1 - \gamma)\delta,$ so by Lemma \ref{almost complete bipartite}, $G$ contains $L_{\delta}$.
  Since $|Y_i'| \leq \frac{2}{\alpha} |X_i' \cup X_i''|,  \tau \cdot \frac{2}{\alpha} \leq \frac{1}{300}$, if $|X_i'| < \delta$, $|Y_i'| \geq \delta$ then by Lemma \ref{almost complete bipartite - 1}, $G[X_i' \cup X_i'', Y_i']$ contains $L_{\delta}$.

  Now, we may assume that $\min\{|X_i'\cup X_i''|, |Y_i'\cup Y_i''|\} < \delta$ for all $i \in [q]$. 
  \begin{claim} \label{two matchings between different bipartitions}
  Let $i \in [q]$.
  If there exists $j \in [q]$ such that there exists  $Z_i \in \{X_i', Y_i'\}, Z_j \in \{X_j' \cup X_j'', Y_j' \cup Y_j''\}$ such that $E(Z_i, Z_j)$ has a matching of size 2, 
  then $G$ contains a $(n',r)$ weak ladder such that $n' -r \geq \delta$.
  \end{claim}
  \begin{proof}
  Without loss of generality, let $i = 1, j=2$ and $Z_1 = X_1', Z_2 = X_2' \cup X_2''$.
  Let $\{u_1,u_2\}, \{v_1,v_2\} \in E(X_1', X_2' \cup X_2'').$
  For $i \in [2]$, choose $e_i$ such that $u_i \in e_i$ and $e_i \cap Y_1' \neq \emptyset$, $e_i'$ such that $v_i \in e_i'$ and $e_i' \cap Y_2' \neq \emptyset$.
  For $i \in [2]$, by Lemma \ref{almost complete bipartite - 1},  $G[X_i' \cup X_i'' \cup Y_i']$ contains $L_{t}$ having $e_i,e_i'$ are in its first 4th rung where $t \geq \frac{2\delta}{3}$.
  By attaching these two ladders with $\{u_1,u_2\}, \{v_1,v_2\}$, we obtain a $(n',r)$-weak ladder such that $r \leq 10$ and $n' - r \geq (2t - 10) - 10 \geq \frac{4\delta}{3} - 20 \geq \delta$. 
  \end{proof}

  \begin{claim} \label{at most 1 edge in each bipartition}
  If there exists $i \in [q]$ such that $||G[X_i' \cup X_i'']|| + ||G[Y_i' \cup Y_i'']|| \geq 2$ then $G$ contains a $(n',r)-$weak ladder for some  $n' \geq \delta + r$.
  \end{claim}
  \begin{proof}
  Let $j \neq i$ and recall that   $V_i' = X_i' \cup X_i'' \cup Y_i'\cup Y_i''$, $V_j' = X_j' \cup X_j'' \cup Y_j' \cup Y_j''$. Since $G$ is 2-connected there are disjoint $V_i'-V_j'$-paths $P,Q$ such that $|P|+|Q|\leq \frac{10}{\alpha}$ from Fact \ref{short-paths}.
  Let $\{x_1\} = V(P) \cap V_i', \{x_2\} = V(Q) \cap V_i'  $ and $\{y_1\} = V(P) \cap V_j',\{y_2\} =  V(Q) \cap V_j' $.
  
  Let $\{z_1,z_2\} \in E(G[X_i' \cup X_i'']) \cup E(G[Y_i' \cup Y_i''])$ be such that $|\{z_1, z_2\} \cap \{x_1,x_2\}| \leq 1$, which is possible since $||G[X_i' \cup X_i'']|| + ||G[Y_i' \cup Y_i'']|| \geq 2$.
  We remove some vertices in $X_i'' \cup Y_i'' \setminus \{x_1,x_2,z_1,z_2\}$ and some vertices in $X_j'' \cup Y_j'' \setminus \{y_1,y_2\}$ so that $|X_i' \cup X_i''|=|Y_i' \cup Y_i''|, |X_j' \cup X_j''|=|Y_j' \cup Y_j''|$.

  For any $x \in \{x_1,x_2,z_1,z_2\}$, if $x \in X_i' \cup X_i''(Y_i' \cup Y_i'')$ 
  choose $x' \in N(x) \cap Y_i' (X_i')$, say $e(x) = \{x,x'\}$, then we have $E_0 = \cup_{x \in \{x_1,x_2,z_1,z_2\}}e(x) $ 
  such that $|E_0| \leq 4$ and for any $e \in E_0$, $|e \cap (X_i' \cup Y_i')| \geq 1$.
  Similarly, for any $y \in \{y_1,y_2\}$, if $y \in X_j' \cup X_j''(Y_j' \cup Y_j'')$ 
  choose $y' \in N(y) \cap Y_j' (X_j')$, say $e(y) = \{y,y'\}$, then we have $E_0' = \cup_{y \in \{y_1,y_2\} }e(y)$ such that $|E_0'| = 2$ and for any $e \in E_0'$, $|e \cap (X_j' \cup Y_j')| \geq 1$.
  Then by Lemma \ref{almost complete bipartite}, there exist ladders $L_{|X_i' \cup X_i''|}$ in $G[V_i']$ and $L_{|X_j' \cup X_j''|} $ in $G[V_j']$ 
  such that $E_0, E_0'$ are in those first $10$ rungs. 
  Since $|X_i' \cup X_i''| + |X_j' \cup X_j''| \geq \frac{4\delta}{3}$, and $20 + \frac{5}{\alpha} < \frac{\delta}{6}$, we obtain $(n',r)-$weak ladder 
  such that $n' \geq \frac{4\delta}{3} - 20$, $r \leq 20 + \frac{10}{\alpha}$ and so 
  $n' - r \geq \frac{4\delta}{3} - \frac{10}{\alpha} - 40 \geq \delta + \frac{\delta}{3} - \frac{30}{\alpha} \geq \delta. (\because n \geq \frac{90}{\alpha^2}.)$
  \end{proof}
  
  Now, we choose $i \in [q]$ such that $ \min\{|X_i'\cup X_i''|, |Y_i'\cup Y_i''|\}$ is maximum and we redistribute vertices from $V_j$, for $j \in [q] \setminus \{i\}$ as follows. Without loss of generality, let $|X_i'\cup X_i''|< \delta$. 
  If there exists $v \in V_j'$ such that $|N(v) \cap Y_i'| \geq 4\tau |Y_i|'$ then we move it to $X_i''$ until $|X_i'\cup X_i''|= \delta$. We apply the same process to $Y_i' \cup Y_i''$ if $|Y_i'\cup Y_i''|< \delta$.
  After the redistribution, if $\min\{|X_i'\cup X_i''|, |Y_i'\cup Y_i''|\}  = \delta$ then again Lemma \ref{almost complete bipartite} and Lemma \ref{almost complete bipartite - 1} imply existence of $L_{\delta}$.
  Thus assume $|X_i'| + |X_i''|$ is less than $\delta$ after redistribution. Since $||G[Y_i' \cup Y_i'']|| \leq 1$, at least $|Y_i'| - 2$ vertices $y \in Y_i'$ have a neighbor in $V(G)\setminus V_i'$.  
  Therefore for some $j \neq i$ and $Z_j'\in \{X_j'\cup X_j'', Y_j \cup Y_j''\}$, $|E_G(Y_i', Z_j')| \geq (|Y_i'|-2)/(2q-2) \geq \delta|Y_i'|/(3n)$. 
  If there is a matching of size two in $G[Y_i',Z_j']$, then by Claim \ref{two matchings between different bipartitions}, we obtain a $(n',r)$-weak-ladder with $n' \geq \delta + r$.
  Otherwise, there is a vertex $z \in Z_j'$ such that $|N_G(z) \cap Y_i'| \geq 4\tau |Y_i'|$  and then we can move $z$ to $X_i''$.
  \end{proof}
  
  Finally, we will prove the case when all the components are small.
  \begin{lemma}\label{small components}
  If every component $D$ of $R$ satisfies $|D| \leq (\delta/n + 3d_1)t$ then either $G$ contains disjoint cycles $C_{2n_1}, C_{2n_2}, \dots, C_{2n_l}$ for every $n_1, \dots, n_l\geq 2$ such that $\sum n_i =\delta$ or $\delta$ is even, $n_1=n_2=\frac{\delta}{2}$ and $G$ is one of the graphs from Example \ref{ex1}.
  \end{lemma}
  \begin{proof}
  Let $\xi = 6d_1/\alpha$, $\tau := \frac{100d_1}{\alpha^2}$. Note that $3\sqrt{\xi} \leq \tau \leq \frac{\alpha}{40} < \frac{1}{40}$.
  Since $d_1< \gamma/2$, there are at least three components. Indeed, otherwise
  $$|V| \leq 2(\delta/n+3d_1)n +\epsilon_1 n = 2\delta +(3d_1+\epsilon_1)n \leq n - (2\gamma -3d_1-\epsilon_1) <n.$$
  Let $q$ be a number of components.
  Let $V_D=\bigcup_{X \in D} X$ and let $G_D=G[V_D]$. Note that $\alpha n/2 \leq \delta -3d_1n \leq |V_D| \leq \delta +3d_1n \leq 2\delta$ and we have
  $$|E_G(V_D, V\setminus V_D)| \leq |D|\frac{n}{t} d_1n +\epsilon_1 n^2 \leq 2d_1\delta n.$$
  Thus
  $$|E(G_D)| \geq \frac{\delta|V_D|}{2} - 2d_1\delta n \geq {|V_D| \choose 2}- 4d_1\delta n\geq (1-\xi){|V_D|\choose 2}.$$
  
  Let $V_D'=\{v\in V_D| |N_G(v)\cap V_D| \geq (1-\sqrt{\xi})|V_D|\}$ and note that $|V_D'|\geq (1-2\sqrt{\xi})|V_D| \geq (1-3\sqrt{\xi})\delta$ and for any $v \in V_D'$, 
  $$|N_G(v) \cap V_D'| \geq (1-3\sqrt{\xi})|V_D'| \geq (1-\tau)|V_D'|. $$
  Move vertices from $V_D \setminus V_D'$ to $V_0$ to obtain $V_0'$. We have $|V_0'| \leq (\epsilon_1+ 2\sqrt{\xi})n\leq 3\sqrt{\xi}n$ and $|V_D'|\geq (1-2\sqrt{\xi})|V_D| \geq (1-3\sqrt{\xi})\delta$.
  Now we redistribute vertices from $V_0'$ as follows. Add $v$ from $V_0'$ to $V_D''$ if $|N(v)\cap V_D'| \geq 4\tau |V_D'|.$
  Since $|V_0'|\leq 3\sqrt{\xi}n \leq \delta/3$ and the number of components is at most $n/(\delta-3d_1n)\leq 2n/\delta$, so for every $v \in V_0'$, there exists a component $D$ such that $|N(v)\cap V_D'| \geq \frac{\alpha}{3} \delta \geq \frac{\alpha}{6} |V_D'|  \geq 4\tau |V_D'|$. 
  Let $V_D^*:= V_D'\cup V_D''$. Note that $|V_D''| \leq 3\sqrt{\xi}n \leq \tau |V_D'|$, which says $|V_D'| \geq (1-\tau)|V_D^*|$.
  Hence for any component $D$, $G[V_D^*]$ is $\tau$-complete.

  \begin{claim}\label{claim-matching}
  If $D_1,D_2$ are two components and there is a matching of size four between $V_{D_1}^*$ and $V_{D_2}^*$, then $G$ contains disjoint cycles $C_{2n_1}, C_{2n_2}, \dots, C_{2n_l}$ for every $n_1, \dots, n_l\geq 2$ such that $\sum n_i =\delta$.
  \end{claim}
  \begin{proof} 
  Let $D$ be a component which is different than $D_1$ and $D_2$.
  By Fact \ref{short-paths}, there exist two $V_D^*-(V_{D_1}^*\cup V_{D_2}^*)$-paths $P,Q$ which can contain vertices from at most two edges in the matching. 
  Let $u,v \in (V(P) \cup V(Q)) \cap V_D^*, x,y \in (V(P) \cup V(Q)) \cap (V_{D_1}^*\cup V_{D_2}^*)$ and let $\{x',y'\}, \{x'',y''\}$ be a matching in $E(V_{D_1}^*, V_{D_2}^* )$ such that $\{x',y',x'',y''\} \cap \{x,y\} = \emptyset$.
  Then we have two cases:
  \begin{itemize}
  \item $x,y$ are in a same component, without loss of generality, let $x,y \in V_{D_1}^*$.
  By applying Lemma \ref{almost complete} to each component, we obtain a ladder in each component.
  If $n_1$ is such that $n_1 > |V_D^*| - 7$ then $C_{2n_i}$ can be obtained by attaching a ladder in $G[V_D*]$ and some first rungs in a ladder in $G[V_{D_1}^*]$ with $P,Q$ and a parity vertex(if necessary) in $G[V_D^*]$.
  If $n_2$ is such that $n_2 > |V_{D_2}^*| - 7$ then $C_{2n_i}$ can be obtained by attaching a ladder in $G[V_{D_2}*]$ and some last rungs in a ladder in $G[V_{D_1}^*]$ with $\{x',x'' \}, \{y',y''\}$.
  Moreover, remaining small cycles can be obtained in a ladder remained in $G[V_{D_1}^*]$.
  Otherwise, the case is trivial.
  \item Let $x \in V_{D_1}^*, y \in V_{D_2}^*$.
  Since there is a matching of size four between $V_{D_1}^*$ and $V_{D_2}^*$, there is a matching $\{x''', y'''\}$ in $E(V_{D_1}^*, V_{D_2}^*)$ or remaining two matching $e_1, e_2$ are such that $x \in e_1, y \in e_2$, say $e_1 = \{x, y'''\}, e_2 = \{x''',y\}$.
  In both case, we obtain a ladder starting at $N(u), N(v)$ in $G[V(D^*)]$.
  In the first sub-case, we choose a ladder starting at $N(x), N(x')$ ending at $N(x''),N(x''')$  in $G[V_{D_1}^*]$ and a ladder starting at $N(y), N(y')$ ending at $N(y''),N(y''')$  in $G[V_{D_2}^*]$. 
  By attaching those three ladders with using parity vertex in an appropriate manner, we obtain a desired structure containing disjoint cycles.
  In the other case, we choose a ladder starting at at $N(x), N(x''')$ ending at $N(x'),N(x'')$  in $G[V_{D_1}^*]$ and a ladder starting at $N(y), N(y''')$ ending at $N(y'),N(y'')$ in $G[V_{D_2}^*]$. Similarly, we are done by attaching those three ladders.
  \end{itemize}
  \end{proof}
By Claim \ref{claim-matching}, we may assume that for any $i,j \in [q]$, $E(V_{D_i}^*, V_{D_j}^*)$ has a matching of at most 3.
Then we have another claim which is useful for the arguments follow.
  \begin{claim} \label{potential vertex to be moved}
  Let $D$ be a component. For any $X \subset V \setminus V_{D^*}$, 
  if $|\{ v \in V_{D^*} : N(v) \cap X \neq \emptyset \} | \geq \frac{|V_{D^*}|}{2} $ then there exists $x \in X$ such that $|N(v) \cap V_{D}'| \geq 4\tau |V_{D}'|$.
  \end{claim}
  \begin{proof}
  Let $X$ is a subset of $V \setminus V_{D^*}$ and assume that $|\{ v \in V_{D^*} : N(v) \cap X \neq \emptyset \} | \geq \frac{|V_{D^*}|}{2} $.
  Then there exists $i \in [q]$ and $Y \subset V_{D_i}^*$ such that 
  $$|\{ v \in V_{D^*} : N(v) \cap Y \neq \emptyset \} | \geq \frac{|V_{D^*}|}{2} \cdot \frac{1}{q} \geq \frac{\alpha |V_{D^*}|}{2}.$$
  So, there exists $v \in Y$ such that 
  $$ |N(v) \cap V_{D^*}| \geq \frac{\alpha |V_{D^*}|}{6}, $$
  which implies that 
  $$ |N(v) \cap V_{D'}| \geq 4 \tau |V_D'|. $$
  \end{proof}

  \begin{claim} \label{two components}
  Let $D_1^*,D_2^*$ be two components. If there exist two distinct vertices $x,y \in V_{D_1}^*$ such that $|N(x) \cap V_{D_2}'|, |N(y) \cap V_{D_2}'| \geq 4\tau|V_{D_2}'|$, then there exists a $(n', r)-$ weak ladder where $r \in \{1,2\}$, $n' + r =  \lfloor \frac{|V_{D_1}^*| + |V_{D_2}^*|}{2} \rfloor $.
  \end{claim}
  \begin{proof}
  Since $|N(x) \cap V_{D_2}'|, |N(y) \cap V_{D_2}'| > \tau|V_{D_2}'|$, there is $x' \in N(x) \cap V_{D_2}'$, so there exists $y' \in N(y) \cap V_{D_2}'$ such that $x' \sim y'$.
  If $|V_{D_1}^*|$ is even, then by Lemma \ref{almost complete} (2), $G[V_{D_2}^*]$ contains a ladder $L_{\lfloor\frac{|V_{D_2}^*|}{2}\rfloor}$ having $\{x',y'\}$ as its first rung and $G[V_{D_1}^*]$ contains a ladder ${L_{\frac{|V_{D_1}^*|}{2} - 1 }}$ starting at $N(x),N(y)$.
  By attaching those two ladder with $\{x,x'\} , \{y,y'\}$, we obtain a $(n',1)$- weak ladder where $n' = \frac{|V_{D_1}^*|}{2} - 1  + \lfloor \frac{|V_{D_2}^*|}{2}\rfloor  = \lfloor \frac{|V_{D_1}^*| + |V_{D_2}^*|}{2} \rfloor - 1.$
  Now, suppose that $|V_{D_1}^*|$ is odd. 
  If $\{x,y\} \cap V_{D_1}'' \neq \emptyset$, without loss of generality, $x \in V_{D_1}''$, 
  then by Fact \ref{almost complete fact 1}, $G[V_{D_1}^* \setminus \{x \}]$ is $\tau$-complete.
  Since  $|N(x) \cap V_{D_2}'| \geq 4 \tau|V_{D_2}'|$, 
  $G[V_{D_2}^* \cup \{x \} ]$ is $\tau$-complete. Since there exists $x'' \in V_{D_1}^* \cap N(x)$ such that $x'' \neq y$,
  by Lemma \ref{almost complete} (2), there exists a $L_{\frac{|V_{D_1}^*| - 1}{2} - 1}$ the first rung $e_1 = \{v_1,v_2\}$ of which is such that $v_1 \in N(x''), v_2 \in N(y)$ in $G[V_{D_1}^* \setminus \{x\}]$, 
  and there exists a $L_{\lfloor \frac{|V_{D_2}^*| + 1}{2}\rfloor -1 }$ the first rung $e_2 = \{v_1',v_2'\}$ of which is such that $v_1' \in N(x), v_2' \in N(y')$ in $G[V_{D_2}^* \cup \{x\}]$,
  so by attaching two ladders with $\{x,x''\}$ and $\{y,y'\}$,  we obtain a $(\lfloor \frac{|V_{D_1}^*| + |V_{D_2}^*|}{2} \rfloor  -2,2)$-weak ladder.
  If $\{x,y\} \subset V_{D_1}'$ then there exists $z \in N(x) \cap N(y) \cap V_{D_1}^*$. 
  Since $x',y' \in V_{D_2}'$, there exists $z' \in N(x') \cap N(y') \cap V_{D_2}^*$.
  By Lemma \ref{almost complete} (4), there exists a $L_{\frac{|V_{D_1}^*| - 1}{2}}$ the first rung of which is $\{z,y \}$ in $G[V_{D_1}^* \setminus \{x\}]$, 
  and there exists a $L_{\lfloor \frac{|V_{D_2}^*|-1}{2} \rfloor}$ the first rung of which is $\{z',y' \}$ in $G[V_{D_2}^* \setminus \{x'\}]$.
  By attaching two ladders with $\{x,x'\} , \{y,y'\} $, we obtain a $(\lfloor \frac{|V_{D_1}^*| + |V_{D_2}^*|}{2} \rfloor  -1,1)$-weak ladder.
  \end{proof}
  
  \begin{claim} \label{two components - 1}
  If $D_1^*,D_2^*$ are two components such that $|V_{D_1}^*| + |V_{D_2}^*| \geq 2K + 1$,
  then there exists a $(n', c)-$ weak ladder where $n' \geq K - 2,  2 \leq c \leq \frac{7}{\alpha}$.
  \end{claim}
  \begin{proof}
  We can always delete a vertex from $V_{D_1}^*$ and so we may assume that 
  $|V_{D_1}^*| + |V_{D_2}^*| = 2K + 1$.
  Then exactly one of the terms $|V_{D_i}^*|$ is odd, and we have $\lfloor \frac{|V_{D_1}^*|}{2} \rfloor + \lfloor \frac{|V_{D_2}^*|}{2} \rfloor = K$.
  By Fact \ref{short-paths}, there exist two disjoint path $P,Q$ between $V_{D_1}^*,V_{D_2}^*$ such that $|V(P)| + |V(Q)| \leq \frac{10}{\alpha}$.
  Let $x_i$ be the endpoints of $P$ and $y_i$ be the ends of $Q$ where $x_i,y_i \in V_{D_i}^* , i \in [2]$. If $|V(P)|+|V(Q)|$ is even then by applying Lemma \ref{almost complete} (2),
  we obtain a $L_{\lfloor \frac{|V_{D_1}^*|-2 }{2}\rfloor}$ in $G[V_{D_1}^*]$ and $L_{\lfloor \frac{|D_2^*|-2}{2} \rfloor}$ in $G[V_{D_2}^*]$ which start at $N(x_1),N(y_1)$, $N(x_2),N(y_2)$, respectively. 
  By attaching these ladders, we obtain a $(n',c)-$weak ladder where $n' = \lfloor \frac{|V_{D_1}^*|-2}{2} \rfloor + \lfloor \frac{|V_{D_2}^*|-2}{2} \rfloor \geq K - 2$ and $2 \leq c \leq \frac{4 + \frac{10}{\alpha}}{2} \leq \frac{6}{\alpha}$.
  Otherwise, assume that $|V(P)| + |V(Q)|$ is odd.
  If $|V_{D_i}^*|$ is odd then $|V_{D_3-i}^*|$ is even, and we apply Lemma \ref{almost complete} (3) to $G[V_{D_i}^*]$ and Lemma \ref{almost complete} (2) to $G[V_{D_3-i}^*]$. 
  Then by attaching those two ladders, we obtain a $(n',c)$-weak ladder where $n' \geq K-2$ and $3 \leq c \leq \frac{5 + \frac{10}{\alpha}}{2} \leq \frac{7}{\alpha}$.
  \end{proof}
  
  Now, we move vertices between components to obtain, if possible, components of larger size. 
  Let $D$ be a component and suppose $|V_{D}^*|\leq \delta$. Then every vertex $v \in V_D'$ has a neighbor in $V\setminus V_D^*$. Thus $|E(V_D', V\setminus V_D^*)| \geq |V_D'|$.
  If there is a matching of size $8n/\delta$, then for some component $F \neq D$
  there is a matching of size four between $V_D'$ and $V_F^*$, and we are done by Claim \ref{claim-matching}.
  Hence there is a vertex $v \in V \setminus V_D^*$ such that $|N(v) \cap V_D^*| > \delta |V_D'|/8n$, so $|N(v) \cap V_D'| \geq 4\tau |V_D'|$, then we can {\it move} $v$ to $V_D''$.
  Thus we may assume that there is a component $D$ such that $|V_D^*| \geq \delta + 1$.
  We will now move vertices between components. To avoid introducing new notation, we will use $D_i^*$ to refer to the $i$th component after moving vertices from and to $D_i^*$.  
  Move vertices so that after renumbering of components we have $|V_{D_1}^*| \geq |V_{D_2}^*| \geq \dots \geq |V_{D_q}^*|$ and for any $k \in [q]$,  $\sum_{i=1}^{k} |V_{D_i}^*|$ is as big as possible
  and subject to this, for any $i,j \in [q] \setminus \{1\}$, $|E(V_{D_1}^*, V_{D_i}^*)| \leq |E(V_{D_1}^*, V_{D_j}^*)|$ where $i < j$.
  Note that if $i < j$ and $|V_{D_i}^*| = |V_{D_j}^*|$ then $|E(V_{D_1}^*, V_{D_i}^*)| \leq |E(V_{D_1}^*, V_{D_j}^*)|$.
  If $|V_{D_1}^*| \geq \delta + 14d_1 n$ then we stop moving any vertices.
  Hence the natural first case is that $|V_{D_1}^*|\geq \delta + 11d_1n$. Since $q-1 \geq 2$, there exists $i \in [q] \setminus \{1\}$ such that 
  at most $7d_1n$ vertices in the original $V_{D_i}^*$ were moved to $V_{D_1}^*$, $G[V_{D_i}^*]$ is $2\tau$-complete and
  $$ |V_{D_1}^*| + |V_{D_i}^*| \geq (\delta + 11d_1n) + (\delta - 10d_1n) = 2\delta + d_1n. $$ 
  By Claim \ref{two components - 1}, $G[V_{D_1}^* \cup V_{D_2}^*]$ contains a $(n',c)$-weak ladder where $n' \geq \delta + \frac{d_1n}{2} - 2$, $2 \leq c \leq \frac{7}{\alpha}$.
  Since $\frac{d_1 n}{2} \geq \frac{8}{\alpha} \geq \frac{7}{\alpha}+2$, by Lemma \ref{weak ladder criteria - 1},  $G[V_{D_1}^* \cup V_{D_2}^*]$ contains all disjoint cycles. 
  So we may assume that all possible moves terminate and 
  $$\delta + 11d_1n > |V_{D_1}^*| \geq |V_{D_2^*}| \geq \dots \geq |V_{D_{q}^*}|.$$ 
  Note that for any $i \in [3],j \in [q]$ such that $j>i$, there is no $v \in V_{D_j}^*$ such that $|N(v) \cap V_{D_i}'| \geq \delta |V_{D_i}'| / (16n)$, since otherwise, $v$ can be moved to $V_{D_i}''$.
  There are at most $(q-1)\cdot 14d_1n$ vertices in each original components moved to other components, 
  but since $(q-1)\cdot 14d_1n \leq \frac{50d_1n}{\alpha}$, for $i \geq 2$, $G[V_{D_i}^*]$ is $2\tau$-complete, because $\tau \geq \frac{100d_1}{\alpha^2}$.
  
  We will continue the analysis based on the size of $V_{D_1}^*$.  Suppose $|V_{D_1}^*| = \delta + c_1$ where $c_1 \geq 1$. 
  \begin{claim} \label{delta + 1} 
  If $|V_{D_1}^*| = \delta + 1$ then for any $u,v \in V_{D_1}^*$, $N(u) \cap N(v) \cap V_{D_1}^* \neq \emptyset$.
  \end{claim}
  \begin{proof}
  Suppose not and let $u,v \in V_{D_1}^*$ be such that $N(u) \cap N(v) \cap V_{D_1}^* = \emptyset$.
  Then for any $x \in V_{D_1}^* \setminus \{u,v\}$, $|N(x) \cap \{u,v\}| \leq 1$, so $|N(x) \cap V_{D_1}^*| \leq |V_{D_1}^*| - 2 = \delta - 1$, therefore 
  $$|E(V_{D_1}^*, V \setminus V_{D_1}^*)| \geq  |V_{D_1}^*| - 2 \geq \frac{|V_{D_1}^*|}{2},$$
  by Claim \ref{potential vertex to be moved}, there exists $z \in V \setminus V_{D_1}^*$ which can be moved to $V_{D_1}^*$, a contradiction.
  \end{proof}

  First, we consider the case that $|V_{D_2}^*| \geq \delta$. 
  By Fact \ref{short-paths}, there are $V_{D_1}^* - V_{D_2}^*$ paths $P_1,P_2$ such that $|V(P_1)|+|V(P_2)| \leq \frac{10}{\alpha}$.
  Denote by $u_i, v_i$ the endpoints of $P_i$ such that $u_i \in V_{D_1}^*, v_i \in V_{D_2}^*$.
  Since $|V_{D_3}^* \cap (V(P_1) \cup V(P_2)| \leq \frac{10}{\alpha}$, $G[V_{D_3}^* \setminus (V(P_1) \cup V(P_2))]$ is $2\tau$-complete (because $\tau |V_{D_3}^*| \geq \tau \cdot \frac{\delta}{2} \geq \frac{20}{\alpha} $), so by Lemma \ref{almost complete},
  $G[V_{D_3}^* \setminus (V(P_1) \cup V(P_2))]$ contains a $L_{n''}$ where $n' \geq \frac{\delta}{3}$.
    
  Since $|V_{D_1}^*| + |V_{D_2}^*| \geq 2\delta + 1$, by Claim \ref{two components - 1}, $G[V_{D_1}^* \cup V_{D_2}^* \cup V(P_1) \cup V(P_2)]$ contains a $(\delta -2,k)$-weak ladder where $k \geq 2$.
  By Corollary \ref{weak ladder criteria - 2}, we may assume that either $n_1 = \lfloor \frac{\delta}{2} \rfloor, n_2 = \lceil \frac{\delta}{2} \rceil$ or $n_1 = \lfloor \frac{\delta - 1}{2} \rfloor, n_2 = \lceil \frac{\delta+1}{2} \rceil$.
  If $n_1 = \lfloor \frac{\delta}{2} \rfloor, n_2 = \lceil \frac{\delta}{2} \rceil$ then $G[V_{D_1}^*]$ contains $C_{2n_2}$ and $G[V_{D_2}^*]$ contains $C_{2n_1}$.
  Otherwise, let $n_1 = \lfloor \frac{\delta - 1}{2} \rfloor, n_2 = \lceil \frac{\delta+1}{2} \rceil$.
  If $\delta$ is odd then $G[V_{D_1}^*]$ contains $C_{2n_2}$ and $G[V_{D_2}^*]$ contains $C_{2n_1}$, so let $\delta$ be even.
  If $u_1 \sim u_2$ then by Lemma \ref{almost complete} (5), $G[V_{D_2}^* \cup \{u_1,u_2\}]$ contains $C_{2n_2}$ and by Lemma \ref{almost complete} (2), $G[V_{D_1}^* \setminus \{u_1,u_2\}]$ contains $C_{2n_1}$.
  If $u_1 \nsim u_2$ then by Claim \ref{delta + 1}, there exists $u_3 \in N(u_1) \cap N(u_2) \cap V_{D_1}^*$, and then $G[V_{D_2}^* \cup \{u_1,u_2,u_3\}]$ contains $C_{2n_2}$ and $G[V_{D_1}^* \setminus \{u_1,u_2,u_3\}]$ contains $C_{2n_1}$.

  Now, we assume that $|V_{D_2}^*| < \delta$, so let $|V_{D_2}^*| = \delta - c_2$ where $c_2 \geq 1$. By Claim \ref{claim-matching}, $|E(V_{D_1}^*, V_{D_2}^*) | \leq 3 |V_{D_2}^*| $, and so 
  $$ |E(V_{D_2}^*, V \setminus (V_{D_1}^* \cup V_{D_2}^*) )| \geq  (c_2+1 -3) |V_{D_2}^*| \geq (c_2 -2) |V_{D_2}^*| .$$ 
  By Claim \ref{potential vertex to be moved}, $|E(V_{D_2}^*, V \setminus (V_{D_1}^* \cup V_{D_2}^*))| < \frac{|V_{D_2}^*|}{2}$, so $c_2 \in [2]$ 
  and there exist two distinct vertices $x,y \in V_{D_1}^*$ such that $|N(x) \cap V_{D_2}^*|+|N(y) \cap V_{D_2}^*| \geq \frac{3|V_{D_2}^*|}{2}$. It implies that $|N(x) \cap N(y) \cap V_{D_2}^*| \geq \frac{|V_{D_2}^*|}{2}$  and $|N(x) \cap V_{D_2}'|, |N(y) \cap V_{D_2}'| \geq 4 \tau |V_{D_2}'|$.
  By Claim \ref{two components}, there exists a $(n', k)-$ weak ladder such that $k \in [2]$ and $n' + k \geq  \lfloor \frac{|V_{D_1}^*| + |V_{D_2}^*|}{2} \rfloor$.

  We have two cases.
  \begin{itemize}
  \item {\bf Case 1: $c_1 \geq c_2$}. Note that $|V_{D_1}^*| + |V_{D_2}^*| = (\delta + c_1) + (\delta - c_2) \geq 2\delta$. 
  Hence $G[V_{D_1}^* \cup V_{D_2}^*]$ contains either a $(\delta-1,1)$-weak ladder or $(\delta-2,2)$-weak-ladder.
  Since $G[V_{D_3}^*]$ contains $L_{n''}$ where $n'' \geq \frac{\delta}{3}$, by Corollary \ref{weak ladder criteria - 2},
  it suffices to show that $G$ contains disjoint $ C_{2\lfloor \frac{\delta}{2} \rfloor  } $,$ C_{2 \lceil \frac{\delta}{2} \rceil  }$ or disjoint $C_{2\lfloor \frac{\delta-1}{2} \rfloor }$,$ C_{2 \lceil \frac{\delta + 1}{2} \rceil  }$.
  We can choose $Z \subset \{x,y\}$ so that $G[V_{D_1}^* \setminus Z]$ contains $ C_{2\lfloor \frac{\delta}{2} \rfloor  }$ and $G[V_{D_2}^* \cup Z]$ contains $C_{2 \lceil \frac{\delta}{2} \rceil  }$.
  Since $N(x) \cap N(y) \cap V_{D_2}^* \neq \emptyset$, we can choose $Z \subset N(x) \cap N(y) \cap V_{D_2}^*$ so that  $G[V_{D_1}^* \cup Z]$ contains $ C_{2 \lceil \frac{\delta + 1}{2} \rceil  }$ and $G[V_{D_2}^* \setminus Z]$ contains $C_{2\lfloor \frac{\delta-1}{2} \rfloor} $.
  \item {\bf Case 2: $c_1 < c_2$}. Then $c_1 = 1, c_2 = 2$. Since $|E(V_{D_2}^*, V \setminus (V_{D_1}^* \cup V_{D_2}^*))| < \frac{|V_{D_2}^*|}{2}$,
  $|E(V_{D_2}^*, V_{D_1}^*)| \geq \frac{5|V_{D_2}^*|}{2}$, so again by Claim \ref{claim-matching}, 
  there exist $x,y,z \in V_{D_1}^*$ such that $E(V_{D_2}^*, V_{D_1}^*)= E(V_{D_2}^*, \{x,y,z\})$ and 
  for any $u \in \{x,y,z\}$, $|E(V_{D_2}^*,\{u\})| \geq \frac{|V_{D_2}^*|}{2}$.
  Note that $G[V_{D_1}^* \cup V_{D_2}^*]$ contains a $(n',1)$-weak ladder where $n' \geq \delta - 2$ and $G[V_{D_3}^*]$ contains $L_{n''}$ where $n'' = \lfloor \frac{|V_{D_3}^*|}{2} \rfloor$.
  If there exists $n_i$ such that $2 < n_i \leq n''$  then $G[V_{D_3}^*]$ contains $C_{2n_i}$, 
  and by Lemma \ref{weak ladder criteria - 1}, $G[V_{D_1}^* \cup V_{D_2}^*]$ contains remaining disjoint cycles.  If for every $i$, $n_i = 2$, then $G$ obviously contains all $C_{2n_i}$ for $i \in [l]$.
  
  Hence we may assume that $l = 2$ and $|V_{D_3}^*| < 2n_1 \leq 2n_2 $. 
  \begin{claim} \label{matching at most 1}
  For any $i,j \in [q] \setminus \{1\}$, the size of a maximum matching in $E(V_{D_i}^*, V_{D_j}^* )$ is at most one.
  \end{claim}
  \begin{proof}
  Suppose to a contrary that there exist $i,j \in [q] \setminus \{1\}$ such that $E(V_{D_i}^*, V_{D_j}^* )$ contains a matching of size at least two.
  If $2 \notin \{i,j\}$ then by Corollary \ref{attaching two components}, $G[V_{D_i}^* \cup V_{D_j}^*]$ contains a $(n',2)-$weak ladder where $n' \geq \lfloor \frac{|D_{V_i}^*|}{2} \rfloor + \lfloor \frac{|D_{V_j}^*|}{2} \rfloor - 2 \geq \frac{3\delta}{4},$ and then $C_{2n_2} \subset G[V_{D_1}^* \cup V_{D_2}^*], C_{2n_1} \subset G[V_{D_i}^* \cup V_{D_j}^*]$.
  Otherwise, without loss of generality, $i = 2$, $j > 2$. Let $\{e_1,e_2\}$ be a matching in $E(V_{D_2}^*, V_{D_j}^*)$.
  Let $x' \in N(x) \cap V_{D_2}', y' \in N(y) \cap V_{D_2}'$ such that $x' \sim y'$ and $\{x',y'\} \cap (e_1 \cup e_2) = \emptyset$. 
  Then $G[V_{D_1}^* \cup \{x',y'\}]$ contains $C_{2n_1}$. Since $G[V_{D_2}^* \setminus \{x',y'\}]$ is $2\tau$-complete, 
  $G[V_{D_2}^* \cup V_{D_j}^* \setminus \{x',y'\}]$ contains $C_{2n_2}$.
  \end{proof}

  If $|V_{D_3}^*| \leq \delta - 4$, then by Claim \ref{potential vertex to be moved}, $|E(V_{D_3}^*, V_{D_1}^* \cup V_{D_2}^*)| > \frac{9|V_{D_3}^*|}{2}$ 
  and then $|E(V_{D_3}^*, V_{D_2}^*)| \geq \frac{3|V_{D_3}^*|}{2}$, and we are done by Corollary \ref{attaching two components}.
  So we may assume that $|V_{D_3}^*| \in \{ \delta - 2, \delta - 3 \}$ which leads to two sub-cases.

  \begin{itemize}
    \item $|V_{D_3}^*| = \delta - 3$. By Claim \ref{potential vertex to be moved}, $|E(V_{D_3}^*, V_{D_1}^* \cup V_{D_2}^*)| > \frac{7|V_{D_3}^*|}{2}$.
    By Claim \ref{claim-matching}, $|E(V_{D_3}^*, V_{D_1}^* )| \leq 3|V_{D_3}^*|$, so by Claim \ref{matching at most 1}, there exists $w \in V_{D_2}^*$ such that $ \frac{|V_{D_3}^*|}{2} < |E(V_{D_2}^*,V_{D_3}^*)|= |E(\{w\}, V_{D_3}^*)| \leq | V_{D_3}^*|. $ 
    Hence $|E(V_{D_3}^*, V_{D_1}^* )| > \frac{5|V_{D_3}^*| }{2}$, and then there exist $x_1,y_1 \in V_{D_1}^*$ such that $|N(x_1) \cap V_{D_3}^*|,|N(y_1) \cap V_{D_3}^*| >  \frac{|V_{D_3}^*|}{2}. $ 
In addition, there is a vertex $z\in V_{D_1}^*\setminus \{x_1,y_1\}$ such that $|N(z)\cap V_{D_3}^*|> \frac{|V_{D_3}^*|}{2}$.  Choose $z' \in N(z) \cap N(w) \cap V_{D_3}^*$. Then $G[V_{D_2}^* \cup \{z,z'\}]$ contains $C_{2n_1}$
    and $G[V_{D_1}^* \cup V_{D_3}^* \setminus \{z,z' \}]$ contains  $C_{2n_2}$.
    \item  $|V_{D_3}^*| = \delta - 2$. Note that we may assume
    $$n_1 = \lfloor \frac{\delta}{2} \rfloor, n_2 = \lceil \frac{\delta}{2} \rceil, $$
  as otherwise $2n_1\leq \delta -2$, and $G[V_{D_3}^*]$ contains $C_{2n_1}$, $G[V_{D_1}^* \cup V_{D_2}^*]$ contains $C_{2n_2}$.
    \begin{claim} \label{delta - 2}
    For any $i \in [q] \setminus \{1\}$ such that $|V_{D_i}^*| = \delta -2$,
    $|E(V_{D_1}^*, V_{D_i}^* )| \geq \frac{5|V_{D_i}|}{2} $ and $E(V_{D_1}^*, V_{D_i}^*) = E(\{x,y,z\}, V_{D_i}^*) $.  
    \end{claim}
    \begin{proof}
    Let $i \in [q] \setminus \{1\}$ be such that $|V_{D_i}^*| = \delta -2$.
    Since $|V_{D_i}^*| = |V_{D_2}^*|$, by the redistribution process, 
    $$ |E(V_{D_1}^*, V_{D_i}^*)| \geq |E(V_{D_1}^*, V_{D_2}^*)| \geq \frac{5|V_{D_2}^*|}{2} = \frac{5|V_{D_i}^*|}{2}.$$
    There exists $x_1,y_1,z_1 \in V_{D_1}^*$ such that $E(V_{D_1}^*, V_{D_i}^*) = E(\{x_1,y_1,z_1\}, V_{D_i}^*) $ and
    $|N(x_1) \cap V_{D_i}^*|, |N(y_1) \cap V_{D_i}^*|, |N(z_1) \cap V_{D_i}^*| \geq \frac{|V_{D_i}^*|}{2}$,
    so $|N(x_1 \cap V_{D_i}')|, |N(y_1 \cap V_{D_i}')|, |N(z_1 \cap V_{D_i}')| \geq 4\tau |V_{D_i}'|$, and then for any $U \subset \{x_1,y_1,z_1\}, G[U \cup V_{D_i}^*] $ is $\tau$-complete.
    If $\{x_1,y_1,z_1\} \neq \{x,y,z\} $, w.l.o.g, $z_1 \notin \{x,y,z\}$,
     then $G[\{y_1,z_1 \} \cup V_{D_i}^*] $ contains $C_{2n_1}$ and $G[V_{D_2}^* \cup V_{D_1}^* \setminus \{y_1,z_1\} ]$ contains $C_{2n_2}$.
    \end{proof}

    \begin{claim} \label{every component is of size delta -2}
    For any $i \in [q] \setminus \{1\}$,  $|V_{D_1}^*| = \delta - 2.$ 
    \end{claim}
    \begin{proof}
    Suppose not and choose $i \in [q] \setminus \{1\}$ such that $|V_{D_i}^*| \leq \delta - 3$, and subject to this, $i$ is the smallest,
    i.e, for any $i' \in [i-1] \setminus \{1\}$, $|V_{D_i'}^*| = \delta - 2$. Note that $i \geq 4$.
    First, assume that there exists $i_1,i_2 \in [i-1] \setminus \{1\}$ such that 
    there exists $y_1 \in V_{D_{i_1}}^*$, $y_2 \in V_{D_{i_2}}^*$ such that $|N(y_1) \cap V_{D_i}^*|, |N(y_2) \cap V_{D_i}^*| > 0$.
    Let $Q$ be a $N(y_1)-N(y_2)$ path in $G[V_{D_i}^*]$.
    Since $|V_{D_{i_1}}^*|,|V_{D_{i_2}}^*| = \delta -2$, by Fact \ref{delta - 2},
    $$|E(V_{D_{i_1}}^*, \{x,y,z\} )|, |E(V_{D_{i_2}}^*, \{x,y,z\})| \geq \frac{5(\delta-2)}{2}.$$
    Choose $x' \in N(x) \cap V_{D_{i_1}^*}$ such that $x' \sim y_1$ and $x'' \in N(x) \cap V_{D_{i_2}^*}$ such that $x'' \neq y_2$.
    Then $G[ \{x\} \cup \{x',y_1\} \cup V(Q) \cup V_{D_{i_2}}]$ contains $C_{2n_1}$ and 
    $G[V_{D_1}^* \cup V_{D_{i_1}}^* \setminus \{x,x',y_1\}]$ contains $C_{2n_2}$.
    Hence $|E(V_{D_1}^*, V_{D_i}^*)| \geq \frac{5|V_{D_i}^*|}{2}$ and there exists $i' \in [i-1] \setminus \{1\}$ such that $|E(V_{D_{i'}}^*, V_{D_i}^*)| \geq \frac{|V_{D_i}^*|}{2}$ 
    and then there exists $V_{D_i}^* - V_{D_{i'}}^*$ path $Q$ such that $V(\mathring{Q}) \subset V_{D_1}^* $ and $|V(\mathring{Q}) \cap \{x,y,z\}| = 1 $.
    Similarly, we can find $C_{2n_1}$ in $G[V_{D_{i'} }^* \cup V_{D_i}^* \cup V(Q)]$ and $C_{2n_2}$ in $G[V_{D_1}^* \cup V_{D_j}^* \setminus V(Q)]$ where $j \in \{2,3\} \setminus \{i'\}$.
    \end{proof}
  
    Finally, suppose there exists $i,j(>i) \in [q] \setminus \{1\}$ such that $E(V_{D_i}^*, V_{D_j}^*) \neq \emptyset$. Let $e^* \in E(V_{D_i}^*, V_{D_j}^*)$.
    By Claim \ref{delta - 2}, $|N(x) \cap V_{D_i}^*|, |N(x) \cap V_{D_j}^*| \geq \frac{\delta-2}{2}$,
    so we can choose $x' \in (N(x) \cap V_{D_i}^*) \setminus e^*, x'' \in (N(x) \cap V_{D_j}^*) \setminus e^*$. 
    Then $G[V_{D_i}^* \cup V_{D_j}^* \cup \{x\}]$ contains $C_{2n_2}$ and $G[V_{D_1}^* \setminus \{x\}]$ contains $C_{2n_1}$.
    Therefore, for any $i,j \in [q] \setminus \{i\}$, $E(V_{D_i}^*, V_{D_j}^*) = \emptyset,$ which implies that 
    $G$ is a graph from Example \ref{ex1}.
  \end{itemize}

  \end{itemize}

  \end{proof}
  We can now finish the proof. If $R$ is connected then by Lemma \ref{done-or-extremal-claim}, \ref{weak ladder criteria - 1}, $G$ contains cycles $C_{2n_1}, \dots, C_{2n_l}$ or there exists a set $V' \subset V$ with $|V'| \geq (1-\delta/n - \beta)n$, such that all but at most $4 \beta n$ vertices $v\in V'$ have
  $|N_{G'}(v)|\le \beta n$ where $G' = G[V']$. If $R$ is disconnected and there is a component which is non-bipartite, then we are done by Lemma \ref{no bipartite},\ref{small components}, and \ref{weak ladder criteria - 1}, and if all components are bipartite, then $G$ has  $C_{2n_1}, \dots, C_{2n_l}$ by Lemma \ref{all bipartite}, \ref{weak ladder criteria - 1}.
  \end{proof}
  \section{The second non-extremal case}\label{sec:nonextr2}
  In this section we will show that if $G$ is non-extremal and $\delta(G)\geq (1/2 -\gamma)n$ for small enough $\gamma$, then $G$ contains disjoint cycles $C_{2n_1}, \dots, C_{2n_l}$.
  
  \begin{theorem}\label{non-extremal two}
    There exists $\gamma>0$ and $N$ such that for every 2-connected graph $G$ on $n\geq N$ vertices with $(1/2-\gamma)n \leq \delta(G) < n/2-1$, $G$ contains disjoint cycles $C_{2n_1}, \dots, C_{2n_l}$ for every $n_1,\dots, n_l$ where $n_i\geq 2$ and $n_1+\cdots + n_l= \delta(G)$ or $G$ is $\beta$-extremal for some $\beta=\beta(\gamma)$ such that $\beta\rightarrow 0$ as $\gamma \rightarrow 0$. In addition, if $G$ is not $\beta$-extremal and $n/2-1\leq\delta(G)\leq n/2$, then $G$ contains a cycle on $2\delta(G)$ vertices.
  \end{theorem}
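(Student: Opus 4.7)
The plan is to rerun the regularity-based argument of Theorem \ref{non extremal main}, now in the denser regime $\delta(G)\geq(1/2-\gamma)n$. Apply Lemma \ref{regularity} with constants $\epsilon_1 \ll \epsilon_3 \ll d_1 = d_1(\gamma) \ll \beta = \beta(\gamma) \ll \gamma$ to obtain an $\epsilon_1$-regular partition with cluster graph $R$ satisfying $\delta(R) \geq (1/2-\gamma-2d_1)t$. Since $\delta(R)$ is close to $t/2$, $R$ has at most two components, each of size $(1/2\pm O(\gamma))t$, and each such component is 2-connected.

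If $R$ is connected, I would run the dichotomy embedded in Lemmas \ref{connected R} and \ref{long path}: take a longest path in $R$ and argue that either it spans at least $(1-2\gamma-\epsilon_3)t$ clusters, in which case Lemma \ref{long path} produces an $(n',r)$-weak ladder with $n'\geq \delta+r$ to which Lemma \ref{weak ladder criteria - 1} applies, or else the proof of Lemma \ref{connected R} yields an independent set $\mathcal{I}\subset V(R)$ with $|\mathcal{I}| \geq (1-\delta/n-8d_1)t$. In the latter case, following Lemma \ref{done-or-extremal-claim}, the union $V'=\bigcup_{X\in\mathcal{I}}X$ has $|V'|\geq(1-\delta/n-\beta)n$ and at most $4\beta n$ vertices of $V'$ have more than $\beta n$ internal neighbors, with $\beta=K\sqrt{d_1}\to 0$ as $\gamma\to 0$; this is precisely the $\beta$-extremal case. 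If instead $R$ is disconnected with components $D_1,D_2$ of sizes $\approx t/2$, then Fact \ref{short-paths} (invoking 2-connectivity of $G$) provides two short disjoint $V_{D_1}$--$V_{D_2}$ paths: when some $D_i$ is non-bipartite apply Lemma \ref{no bipartite}, and when both are bipartite apply Lemma \ref{all bipartite} together with Lemmas \ref{almost complete bipartite} and \ref{almost complete bipartite - 1}. This produces either the desired weak ladder or forces the near-complete bipartite structure of Example \ref{ex2}, which is again $\beta$-extremal.

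For the addendum with $n/2-1\leq\delta(G)\leq n/2$, I would apply the same argument to the single target $n_1=\delta(G)$: the $\beta$-extremal alternative is excluded by hypothesis, so a weak ladder of size at least $\delta(G)$ is produced, and it contains $C_{2\delta(G)}$ by Lemma \ref{weak ladder criteria - 1}. The main technical obstacle is that the bound $n'\geq\delta+r$ is tight when $\delta$ is close to $n/2$, so the slack absorbed by the weak-ladder gap terms and by the regularity error must be bookkept carefully; this is handled by running the ladder-construction step with the stronger spanning-ladder result from \cite{CK} (rather than the weaker Lemma \ref{ladder}) at the boundary clusters, and by choosing the cascade $\epsilon_1\ll\epsilon_3\ll d_1\ll\beta\ll\gamma$ so that all error terms remain small relative to the gap between $\delta$ and $n/2$.
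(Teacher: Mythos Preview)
Your plan has a genuine gap in both the connected and the disconnected case.

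\textbf{Connected $R$.} Lemmas \ref{long path} and \ref{connected R} cannot be invoked when $\delta\geq(1/2-\gamma)n$: their hypotheses require a $T$-graph (or component) of size at least $(2\delta/n+\epsilon_3)t$, which exceeds $t$ once $\delta\approx n/2$. So the dichotomy ``long path $\Rightarrow$ weak ladder, or large independent set $\Rightarrow$ extremal'' is simply not available here. The paper does not try to salvage this route; instead it imports the specific vertex-redistribution procedure from \cite{CK}: take a near-spanning path $U_1V_1\cdots U_sV_s$ in $R$, move the leftover clusters to $V_0$, and reinsert vertices of $V_0$ two at a time into pairs $U_i,V_j$ subject to degree and capacity constraints. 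The algorithm either succeeds and yields a spanning ladder on $2\lfloor n/2\rfloor$ vertices, or it gets stuck, and the analysis of the stuck configuration is exactly what forces $\beta$-extremality. Your sentence about ``running the ladder-construction step with the stronger spanning-ladder result from \cite{CK}'' is in the right spirit, but the \cite{CK} ingredient is this redistribution-and-failure argument, not merely a sharper version of Lemma \ref{ladder}.

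\textbf{Disconnected $R$.} With $\delta(R)\geq(1/2-3d_1)t$ there are exactly two components $D_1,D_2$, each of size $(1/2\pm O(d_1))t$. Since $\delta(R)$ nearly equals $|D_i|$, each $D_i$ is almost complete in $R$; in particular neither is bipartite, so Lemma \ref{all bipartite} is vacuous here, and Lemma \ref{no bipartite} also fails because its proof needs $|C|+|C'|\geq(2\delta/n+d_1)t>t$. More importantly, this two-clique structure is \emph{not} $\beta$-extremal and has nothing to do with Example \ref{ex2}: a $\beta$-extremal graph has a large set with few internal edges, whereas here each half is dense. The paper handles this case directly: it shows $G[V_{D_i}^*]$ is $\tau$-complete (via the $(1-\xi)\binom{|V_D|}{2}$ edge count), uses 2-connectivity to find a matching of size two between $V_{D_1}^*$ and $V_{D_2}^*$, and then through a short case analysis (on whether $|V_{D_i}^*|\leq\delta$, whether the matching endpoints are adjacent or share a common neighbour) builds either a $(\delta,1)$- or a $(\delta+2,2)$-weak ladder using Lemma \ref{almost complete} and Corollary \ref{attaching two components}. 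Lemmas \ref{weak ladder criteria - 1} and \ref{weak ladder of case r = 1} then finish. This step is where the real work of Theorem \ref{non-extremal two} lies, and your proposal does not address it.
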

  \begin{proof}We will use the same strategy as in the proof of Theorem \ref{non extremal main}. The first part of the proof is very similar to an argument from \cite{CK} and we only outline the main idea.
  Consider the reduced graph $R$ as in the proof of Theorem \ref{non extremal main}.
  
  First suppose $R$ is connected. We will use the procedure from \cite{CK} to show that either $G$ has a ladder on at least $n-1$ vertices or $G$ is $\beta$-extremal. Since $R$ is connected and
  $\delta(R)\geq (\delta/n -2d_1)t\geq (1/2- 2\gamma)t$, there is a path in $R$, $P=U_1V_1, \dots, U_sV_s$ where $s\geq (1/2-3\gamma)t$. As in \cite{CK} we move one vertex from $U_1$ to $U_s$, and the clusters in $R$ which are not on $P$ to $V_0$ so that $|V_0| \leq 7\gamma n$ and redistribute vertices from $V_0$ using the following procedure from \cite{CK}.
  Let $\xi, \sigma$ be two constants. The procedure is executed twice with different values of $\xi$ and $\sigma$. Distribute two vertices at a time and assign them to $U_i, V_j$ so that for every $i$, $|U_i| -|V_i|$ is constant, the number of vertices assigned to $U_i$ and $V_j$ is at most $O(\xi n/k)$, and if $x$ is assigned to $U_i$ ($V_j$), then $|N_G(x) \cap V_i| \geq \sigma n/k$ ($|N_G(x)\cap U_j| \geq \sigma n/k$).
  Let $Q$ denote the set of clusters $X$ such that $\xi n/k$ vertices have been assigned to $X$. We have $|Q| \leq 7\gamma k/\xi$.
  For $X\in \{U_i, V_i\}$, let $X^*$ be such that $\{X^*, X\}=\{U_i,V_i\}$. For a vertex $z$ let $N_z=\{X \in V(P)\setminus Q| |N_G(z)\cap X^*|\geq \sigma n/k\}$ and $N^*_z=\{X^*| X\in N_z\}$.
  
  Take $x,y$ from $V_0$, and choose $X, Y$ such that $X, X^*, Y, Y^*$ are not in $Q$, and either $N_x^*\cap N_y\neq \emptyset$ or  $N_x^*\cap N_y= \emptyset$ but $\exists_{X\in N_x, Y\in N_y} |E_G(X,Y)|\geq 2\sigma n^2/k^2$.
  The argument from \cite{CK} shows that either $G$ has a ladder on $2\lfloor n/2 \rfloor$ vertices or the algorithm fails. We will show that if it fails, then $G$ is $\beta$-extremal for some $\beta>0$.
  Since $|Q| \leq 7\gamma k/\xi$ and $|V_0| \leq 7\gamma n$, using the fact that $\delta(G)\geq (1/2-\gamma)n$, we have $|N_x| \geq \left(\frac{1}{2}- \frac{10\gamma}{\xi}\right)k$.
  If $N_x^*\cap N_y \neq \emptyset$, then we assign $x$ to $X\in N_x$ and $y$ to $X^*$ for some $X$ such that $X^*\in N_x^* \cap N_y$. Otherwise $N_x^*\cap N_y =\emptyset$ (and so $N_x$ and $N_y$ are almost identical). If there is $X\in N_x$ and $Y\in N_y$ such
  that  $|E(X,Y)|\geq 2\sigma n^2/k^2$, then assign $x$ to $X$, $y$ to $Y$ and a vertex $y'\in Y$ such that $|N_G(y)\cap X| \geq \sigma n/k$ to $X^*$. Otherwise $G$ is $\beta$-extremal for some $\beta>0$.
  
  We can now assume that $R$ is disconnected so it has two components $D_1,D_2$. Although slightly different arguments will be needed, we will reuse some parts of the proof of Lemma \ref{small components}.
  As in the proof of Theorem \ref{non extremal main}, we have $\delta(R)\geq (\delta/n -2d_1)t \geq (1/2-3d_1)t$. We set $\xi:= 12d_1$, $\tau = 400d_1$ and for a component $D$ define $V_D'=\{v||N_G(v)\cap V_D| \geq (1-\sqrt{\xi})|V_D|\}$ where $V_D$ is the set of vertices in clusters from $D$.
  As in the case of the proof of Lemma \ref{small components}, we have $|E(G_D)| \geq (1- \xi)\binom{|V_D|}{2}$ and similarly we also have $|V_D\setminus V_D'|\leq 2\sqrt{\xi}|V_D|$. We move vertices from $V_D \setminus V_D'$ to $V_0$ and 
  redistribute them to obtain $V_D''$ consisting of those vertices $v\in V_0$ for which $|N_G(v) \cap V_D'|\geq \frac{\delta}{6} \geq \tau |V_D'|$, and set $V_D^*:=V_D'\cup V_D''$. 
  We have $V_{D_1}^*\cup V_{D_2}^*= V(G)$ and $G[V_{D_1}^*]$, $G[V_{D_2}^*]$ are $\tau$-complete. By Lemma \ref{almost complete} (2), $G[V_{D_1}^*]$ contains $L_{\lfloor \frac{|V_{D_1}^*|}{2} \rfloor}$ and $G[V_{D_2}^*]$ contains $L_{\lfloor \frac{|V_{D_2}^*|}{2} \rfloor}$.

  Since $\delta < \frac{n}{2} - 1$, $n = 2\delta + K$ where $K \geq 3$.
  If $\delta$ is odd then there exists $i \in [l]$ such that $n_i > 2$.
  If $\delta$ is even, i.e, $4 | 2\delta$, and for any $i \in [l]$, $n_i = 2$ then $G$ contains disjoint cycles $C_{2n_1}, C_{2n_2}, \dots, C_{2n_l}$. 
  Indeed, if $|V^*_{D_1}| = 4t + b, |V^*_{D_2}| = 4t' + b'$ such that $b+b' > K, b,b' < 4$ then $b+b' \geq K+4 \geq 7$, so $b= 4$ or $b' =4$, a contradiction.
  Hence by Lemma \ref{weak ladder criteria - 1} and \ref{weak ladder of case r = 1}, it suffices to show that $G$ contains either $(\delta +2 , 2)$-weak ladder or $(\delta,1)$-weak ladder.
  Since $G$ is 2-connected, there is a matching of size two  in $G[V_{D_1}^*, V_{D_2}^*]$.

  \begin{claim} \label{attaching two components 1}
  If there exists a matching consisting of $\{u_1,u_2\}, \{v_1,v_2\} \in E(V_{D_1}^*, V_{D_2}^*)$ such that $N(u_1) \cap N(v_1) \cap V_{D_1}^* \neq \emptyset$ and $N(u_2) \cap N(v_2) \cap V_{D_2}^* \neq \emptyset$ 
  then $G[V_{D_1}^* \cup V_{D_2}^*]$ contains $(n',1)$-weak ladder where $n' \geq \lfloor \frac{|V_{D_1}^*| - 1}{2} \rfloor + \lfloor \frac{|V_{D_2}^*| - 1}{2} \rfloor.$
  \end{claim}
  \begin{proof}
  For $i \in [2]$, choose $z_i \in N(u_i) \cap N(v_i)$. 
  By Lemma \ref{almost complete} (4), $G[V_{D_i}^* \setminus \{u_i\}]$ contains $L_{\lfloor \frac{|V_{D_i}^*| - 1}{2} \rfloor}$ having $\{z_i,v_i\}$ as its first rung.
  By attaching these two ladders with $\{u_1,u_2\}, \{v_1,v_2\} $, we obtain a desired weak ladder.
  \end{proof}

  \begin{claim} \label{at least delta + 1}
  If $|V_{D_1}^*| \leq \delta$ then there exists a matching consisting of $\{u_1,u_2\}, \{v_1,v_2\} \in E(V_{D_1}^*, V_{D_2}^*)$ such that $u_1 \sim v_1$.
  \end{claim}
  \begin{proof}
  Let $I$ be a maximum independent set in $G[V_{D_1}^*]$. If $G[V_{D_1}^*]$ is complete then it is trivial, so we may assume $|I| \geq 2$.
  Choose $u_1 \in I, v_1 \in V_{D_1}^* \setminus I$ such that $u_1 \sim v_1$.
  Since $|I| \geq 2$, $|N(u_1) \cap V_{D_1}^*| \leq |V_{D_1}^*| - 2 \leq \delta - 2$, which implies that 
  $$ |N(u_1) \cap V_{D_2}^*| \geq 2.$$
  Since $|N(v_1) \cap V_{D_2}^*| \geq 1$, we can choose $v_2 \in N(v_1) \cap V_{D_2}^*$ and $u_2 \in N(u_1) \cap V_{D_2}^*$ such that $u_2 \neq v_2$.
  \end{proof}
  
  \begin{claim} \label{weak 1}
    If $|V_{D_1}^*|, |V_{D_2}^*| \leq \delta + 8$ then there exists a matching consisting of $\{u_1,u_2\} , \{v_1,v_2 \} \in E(V_{D_1}^*, V_{D_2}^*)$ such that one of the following holds.
    \begin{enumerate}
      \item $N(u_1) \cap N(v_1) \neq \emptyset$ and $N(u_2) \cap N(v_2) \neq \emptyset$.
      \item $u_1 \sim v_1$ or $u_2 \sim v_2$.
    \end{enumerate}
  \end{claim}
  \begin{proof}
  Suppose that for any two independent edges $\{u_1,u_2\} , \{v_1,v_2 \} \in E(V_{D_1}^*, V_{D_2}^*)$ , the first condition does not hold.
  Choose $\{u_1,u_2\} , \{v_1,v_2 \} \in E(V_{D_1}^*, V_{D_2}^*)$. 
  If $u_1 \sim v_1$ or $u_2 \sim v_2$ then the second condition holds, so we may assume that $u_1 \nsim v_1$ and $u_2 \nsim v_2$.
  Without loss of generality, $N(u_1) \cap N(v_1) = \emptyset$. Then $|N(u_1) \cap V_{D_1}^*| + |N(v_1) \cap V_{D_1}^*| \leq |V_{D_1}^*| - 2$, and then 
  $$|N(u_1) \cap V_{D_2}^*| + |N(v_1) \cap V_{D_2}^*| \geq 2\delta - (|V_{D_1}^*| - 2) \geq \delta - 6 \geq (1-\tau)|V_{D_2}^*|.$$
  Without loss of generality, $|N(v_1) \cap V_{D_2}^*| \geq |N(u_1) \cap V_{D_2}^*|,$ so $|N(v_1) \cap V_{D_2}^*| \geq  \frac{(1-\tau)|V_{D_2}^*|}{2}.$
  If $|N(u_1) \cap V_{D_2}^*| > \tau |V_{D_2}^*|$ then there exists $u_2' \in N(u_1) \cap V_{D_2}'$, 
  since $|N(u_2') \cap V_{D_2}'| \geq (1-\tau)|V_{D_2}'|$ there exists $v_2' \in N(u_2') \cap N(v_1) \cap V_{D_2}'$, so $\{u_1,v_1\}, \{u_2',v_2'\}$ are such that 2) holds.
  Otherwise, assume that $|N(u_1) \cap V_{D_2}^*| \leq \tau |V_{D_2}^*|$. 
  Note that $|N(u_2) \cap V_{D_2}'| \geq 4\tau |V_{D_2}^*|$. Since $|N(v_1) \cap V_{D_2}^*| \geq (1-2\tau)|V_{D_2}^*|$, 
  there exists $v_2' \in N(v_1) \cap N(u_2) \cap V_{D_2}^*$, so $\{u_1,v_1\}, \{u_2,v_2'\}$ are such that 2) holds.
  \end{proof}

  Without loss of generality $|V_{D_1}^*| \leq |V_{D_2}^*|$. 
  If $|V_{D_1}^*| \leq \delta$ then by Claim \ref{at least delta + 1} and Corollary \ref{attaching two components}, $G$ contains $(\delta,1)$-weak ladder.
  Hence we may assume that $|V_{D_1}^*| \geq \delta + 1.$
  If $|V_{D_2}^*| \geq \delta +9$ then $|V_{D_1}^*| + |V_{D_2}^*| \geq 2\delta +10$, and then by Corollary \ref{attaching two components}, 
  $G$ contains $(n',2)$-weak ladder where 
  $$ n' \geq \lfloor \frac{|V_{D_1}^*|}{2} \rfloor + \lfloor \frac{|V_{D_2}^*|}{2} \rfloor - 2 \geq \delta + 2. $$
  Hence $\delta +1 \leq |V_{D_1}^*| \leq |V_{D_2}^*| \leq \delta +8$.
  By Claim \ref{weak 1}, there exists a matching consisting of $\{u_1,u_2\} , \{v_1,v_2 \} \in E(V_{D_1}^*, V_{D_2}^*)$ such that one of the conditions from Claim \ref{weak 1} holds.
  If $N(u_1) \cap N(v_1) \neq \emptyset$ and $N(u_2) \cap N(v_2) \neq \emptyset$ then by Claim \ref{attaching two components 1}, $G$ contains 
  $(n',1)$-weak ladder where $n' \geq \lfloor \frac{|V_{D_1}^*| - 1}{2} \rfloor + \lfloor \frac{|V_{D_2}^*| - 1}{2} \rfloor \geq \delta.$
  Otherwise, $u_1 \sim v_1$ or $u_2 \sim v_2$, then by Corollary \ref{attaching two components}, $G$ contains $(n',1)$-weak ladder where $n' \geq \lfloor \frac{|V_{D_1}^*|}{2} \rfloor + \lfloor \frac{|V_{D_2}^*|}{2} \rfloor - 1 \geq \delta$.
  \end{proof}

  A similar, but more involved analysis can be done in the case when $\delta\geq n/2-1$, but additional obstructions occur.
  \begin{example}
  \begin{itemize}
  \item Suppose $n=4p+2$ for some odd integer $p$. Let $V, U$ be two disjoint sets of size $2p+1$ each and let
  $v_1,v_2\in V$ $u_1,u_2\in U$. Consider graph $G$ such that $G[V\setminus \{v_i\}]= K_{2p}$, $G[V\setminus \{u_i\}]= K_{2p}$ for $i=1,2$, and $v_iu_i\in G$ for $i=1,2$. Then $G$ is $2$-connected, $\delta(G)=2p$,
  but $G$ has no $p$ disjoint copies of $C_4$. Indeed, since $2p$ is not divisible by four, for $p$ disjoint copies of $C_4$ to exist, there would have a copy of $C_4$ with at least one vertex in each of $U$ and $V$.
  \item Suppose $n= 4p+1$ where $p\in \mathbb{Z}^+$ is odd. Consider $G$ obtained from two copies $K, K'$ of $K_{2p}$ by joining a vertex $v$ from $K$ with a vertex $v'$ from $K'$ by an edge and adding one more vertex $w\notin V(K)\cup V(K')$ and making it adjacent to $V(K) \cup V(K')\setminus \{v,v'\}$. Then $G$ is $2$-connected, $\delta(G)=2p$ but $G$ has no $p$ disjoint copies of $C_4$.
  Indeed, otherwise there would have to be copy of $C_4$ which has exactly two vertices in $K$ or $K'$. If there is a copy with exactly two vertices in $K$, then the remaining two must be $w$ and $v'$ which are not adjacent.
  \end{itemize}
  \end{example}
  \section{Extremal Case}\label{sec:extr}
  In this section we will prove the extremal case.
  \begin{theorem}\label{extremal case}
  Let $0< \alpha < \frac{1}{2}$ be given and $\beta$ be such that $\beta < (\frac{\alpha}{400})^2 \leq \frac{1}{640000}$.
  If $G$ is a graph on $n$ vertices with minimum degree $\delta\geq \alpha n$  which is $\beta$-extremal, then 
  either $G$ contains $L_{\delta}$ or $G$ is a subgraph of the graph from Example \ref{ex2}.
  Moreover, in the case when $G$ is a subgraph of the graph from Example \ref{ex2},
  for every $n_1, \dots, n_l \geq 2$ such that $\sum n_i = \delta$, $G$ contains disjoint cycles $C_{2n_1}, C_{2n_2}, \dots, C_{2n_l}$ if $n_i \geq 3$ for at least one $i$.
  \end{theorem}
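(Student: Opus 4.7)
The plan is to exploit the $\beta$-extremality structure by first cleaning up the witness set $B$: after discarding the at most $4\beta n$ vertices of $B$ with more than $\beta n$ neighbours in $B$ and adjoining them to $A = V\setminus B$, every remaining $v\in B$ satisfies $|N(v)\cap B|\leq\beta n$ and hence $|N(v)\cap A|\geq \delta-\beta n$, while $\delta-\beta n\leq|A|\leq\delta+5\beta n$. A standard double-counting of $(A,B)$-edges shows that all but $O(\sqrt{\beta}n)$ vertices of $A$ are joined to all but $O(\sqrt{\beta}n)$ vertices of $B$, so the bipartite subgraph $G[A,B]$ meets the hypotheses of Lemma~\ref{almost complete bipartite - 1} with $\tau=O(\sqrt{\beta})$ and $C=2/\alpha$, both admissible under $\beta<(\alpha/400)^2$.

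The first statement splits on $|A|$. If $|A|\geq\delta$, pick any $A_0\subseteq A$ of size $\delta$ and apply Lemma~\ref{almost complete bipartite - 1} with $Y=A_0$, $X=B$ to find $L_\delta$ directly. If $|A|\leq\delta-1$, the degree condition forces $|N(v)\cap B|\geq\delta-|A|\geq 1$ for every $v\in B$. If some $v\in B$ has $|N(v)\cap B|\geq 2$ with $B$-neighbours $w_1,w_2$, I will place $v$ at a corner $a_1$ of the intended $L_\delta$ with $b_1=w_1$ and $b_2=w_2$, take $u_1\in N(w_1)\cap N(w_2)\cap A$ (non-empty because each $w_i$ misses at most $O(\beta n)$ of $A$) as $a_2$, and then invoke Lemma~\ref{almost complete bipartite - 1} on $(A\setminus\{u_1\},\,B\setminus\{v,w_1,w_2\})$ with a prescribed first rung whose $A$-endpoint lies in $N(w_2)$ and whose $B$-endpoint lies in $N(u_1)$, completing the $L_\delta$. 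The sub-case $|A|\leq\delta-2$ is strictly easier because $G[B]$ then has minimum degree at least $2$, so any $v\in B$ plays the corner role. The only remaining possibility is $|A|=\delta-1$ with every $v\in B$ having exactly one $B$-neighbour; then $G[B]$ is a perfect matching and $|N(v)\cap A|\geq\delta-1=|A|$ forces $G[A,B]=K_{|A|,|B|}$, so $G$ is a subgraph of the graph in Example~\ref{ex2}.

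For the ``moreover'' statement, assume $G$ is a subgraph of the graph in Example~\ref{ex2} with $|U|=\delta-1$, $|V|=n-\delta+1$, and some $n_l\geq 3$. Embed $C_{2n_l}$ as a cycle alternating $U$-singletons with $V$-blocks, taking exactly two of the $V$-blocks to be matching edges of $G[V]$ (length $2$) and the remaining $n_l-3$ to be single vertices; this cycle uses $n_l-1$ vertices of $U$, $n_l+1$ vertices of $V$, and exactly $2$ matching edges. Every other cycle $C_{2n_j}$ with $n_j=2$ is embedded as a $K_{2,2}$ on two fresh $U$-vertices and two fresh $V$-vertices, consuming no matching edges. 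The total $U$-consumption is $2(l-1)+(n_l-1)=\delta-1=|U|$, the $V$-consumption equals $\delta+1\leq n-\delta+1=|V|$, and only $2$ matching edges are used out of the $|V|/2$ available, so the disjoint packing exists. The hardest point is the boundary case $|A|=\delta-1$ with a lone $B$-vertex of $B$-degree exactly $2$: the corner insertion barely meets the threshold and the subsequent call to Lemma~\ref{almost complete bipartite - 1} must be configured carefully so that $v,w_1,w_2,u_1$ lie in the lemma's good/bad partition as required and the prescribed first rung of the extended ladder avoids the pre-placed vertices.
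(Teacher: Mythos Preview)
Your outline captures the correct high-level picture, but there are two genuine gaps that the paper's proof addresses with more care.

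\textbf{Gap 1 (exceptional vertices in $A$).} When you invoke Lemma~\ref{almost complete bipartite - 1} with $Y=A_0$ (or $Y=A\setminus\{u_1\}$) and $X=B$, the lemma requires every $y\in Y\setminus Y'$ to satisfy $|N(y)\cap X|\geq 4\tau|X|$. But after your cleanup, the ``bad'' vertices of $A$ --- both original $A$-vertices with few $B$-neighbours and the $\leq 4\beta n$ vertices you moved from $B$ into $A$ --- may have arbitrarily few (even zero) neighbours in the new $B$: nothing in the minimum-degree condition prevents an $A$-vertex from having all of its $\delta$ neighbours inside $A$ when $|A|\geq\delta$. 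The paper resolves this by isolating all exceptional vertices in a set $C$, then partitioning $C=A_2'\cup B_2'$ according to which side each $C$-vertex has at least $6\sqrt{\beta}\,n$ neighbours (every vertex qualifies on at least one side since $|C|$ is tiny), and wrapping each $C$-vertex in a 3-ladder before assembling $L_\delta$ via an auxiliary Hamiltonian graph. Without this redistribution step, your direct call to Lemma~\ref{almost complete bipartite - 1} is not justified.

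\textbf{Gap 2 (the case $|A|\leq\delta-2$).} Your corner construction places one $B$-vertex $v$ at position $a_1$, gaining exactly one rung beyond the $|A|$ rungs contributed by $A$; this yields $L_{|A|+1}$, which equals $L_\delta$ only when $|A|=\delta-1$. When $|A|=\delta-K$ with $K\geq 2$ you need $K$ extra rungs, and the claim that this case is ``strictly easier'' is false: having $\delta(G[B])\geq K$ does not by itself furnish a structure that boosts the ladder by $K$. The paper handles $K\geq 3$ by finding a triple matching ($K$ vertex-disjoint stars $K_{1,3}$) in $G[B_2\cup B_2']$ via a counting argument, and $K=2$ by a combination of triple and double matchings; each star centre then sits on the $A$-side of the ladder.

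A minor point on the ``moreover'' clause: your vertex count $2(l-1)+(n_l-1)=\delta-1$ presumes $n_j=2$ for all $j\neq l$, whereas the hypothesis only guarantees that \emph{some} $n_i\geq 3$. The fix is easy --- embed every $C_{2n_j}$ with $j\neq l$ as an alternating $2n_j$-cycle in $K_{|U|,|V|}$ using $n_j$ vertices from each side, and apply your two-matching-edge trick only to $C_{2n_l}$ --- but as written the accounting is specific to the all-$2$ case.
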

  \begin{proof}
  Recall that $G$ is $\beta$-extremal if the exists a set $B\subset V(G)$ such that $|B| \geq (1-\delta/n-\beta)n$ and all but at most $4\beta n$ vertices $v\in B$ have $|N(v) \cap B| \leq \beta n$.
  Let $A = V(G) \backslash B$ and note that $\delta -\beta n \leq |A| \leq \delta +\beta n$, because for some $w\in B$, $|N(w)\cap A| \geq \delta -\beta n$.
  Let $C:= \{v\in B: |N(v) \cap B| > \beta n\}$, $A_1:=A, B_1:=B\setminus C$.
  Then $|B_1| \geq n-\delta -5\beta n$ and $|C|\leq 4{\beta}n$.
  Consequently, we have
  \begin{equation}\label{eq-ex1}
  |E(A_1, B_1)|\geq (\delta -5\beta n)|B_1|\geq (\delta -5\beta n)(n- \delta -5\beta n)\geq \delta n -\delta^2 -5\beta n^2.
  \end{equation}
  We have the following claim.
  \begin{claim} \label{first condition of extremal case}
  There are at most $ \sqrt{\beta}n$ vertices $v$ in $A_1$ such that $|N(v) \cap B_1| < n - \delta - 6\sqrt{\beta}n$.
  \end{claim}
  \begin{proof}
  Suppose not. Then
  \begin{align*}
  |E(A_1,B_1)| < (n- \delta - 6\sqrt{\beta}n) \cdot \sqrt{\beta} n + (n - \delta + \beta n)(\delta +\beta n - \sqrt{\beta}n) \leq \delta n-\delta^2 -5\beta n^2.
  \end{align*}
  This contradicts (\ref{eq-ex1}).
  \end{proof}
  
  Let $\gamma := 6\sqrt{\beta}$ and move those vertices $v\in A_1$ to $C$ for which $|N(v) \cap B_1| < n-\delta - 6\sqrt{\beta}n$.
  Let $A_2 := A_1\setminus C$, $B_2:=B_1$. Then, by Claim \ref{first condition of extremal case}, $|A_2| \geq \delta - (\beta + \sqrt{\beta})n$, $|B_2| \geq n- \delta - 5\beta n$ and,
  \begin{equation}
  |C| \leq (4\beta +\sqrt{\beta})n <2\sqrt{\beta}n.
  \end{equation}
  In addition, for every $v\in A_2$, $|N(v) \cap B_2| \geq n-\delta - 6\sqrt{\beta}n$
  and for every vertex $v\in B_2$, $|N(v) \cap A_2| \geq \delta  - (\beta + \sqrt{\beta})n$.
  
  We now partition $C= A_2'\cup B_2'$ as follows.
  Add $v$ to $A_2'$ if $|N(v) \cap B_2| \geq \gamma n$, and add it to $B_2'$ if  $|N(v) \cap A_2| \geq \gamma n$ and $\min \{|A_2\cup A_2'| , |B_2\cup B_2'|\} $ is as large as possible.
  Without loss of generality assume $|A_2\cup A_2'| \leq |B_2\cup B_2'|$. We have two cases.\\
  {\bf Case (i)} $|A_2\cup A_2'| \geq \delta$.\\
  For any $v \in A_2'$, $|N(v) \cap B_2| \geq \gamma n > |C|  \geq |A_2'|$. Therefore,
  there exists matching $M \in E(A_2', B_2)$ which saturates $A_2'$. Note that $q:=|M| = |A_2'| \leq |C| < 2\sqrt{\beta}n$. 
  For every $\{x_i, y_i\} \in M$, we can pick $x'_i,x''_i \in A_2, y'_i,y''_i \in B_2 \setminus V(M)$ all distinct, so that
  $\{x'_i,y'_i\}\in E, \{x''_i,y''_i\} \in E$ and $x'_i,x''_i \in N(y_i), y'_i,y''_i \in N(x_i)$.
  Note that this is possible because $|N(x_i) \cap B_2|  \geq 6\sqrt{\beta}n >3|M|$. Then $G[\{x_i,y_i,x'_i,x''_i,y'_i,y''_i \}]$ contains a 3-ladder, which we will denote by $L_i$.
  Note that $|\bigcup_{i\leq q} V(L_i)| = 3|M| < 6\sqrt{\beta}n$. We repeat the same process to find $p$ 3-ladders $L_j$ for each vertex from $B_2'$. We have $p+q = |C| < 2\sqrt{\beta}n$ 3-ladders, each containing exactly one vertex from $C$. 
  Note that $|A_2 \setminus \bigcup_{i=}^{p+q} V(L_i)|\leq |B_2 \setminus \bigcup_{i=1}^{p+q} V(L_i)|$.
  
  For every $v \in A_2\setminus \bigcup_{i=1}^{p+q} V(L_i)$,
  $|N(v) \cap (B_2 - (\bigcup_{i=1}^{p+q}V(L_i))) | \geq n-\delta -18\sqrt{\beta}n$ and for every
  $v\in B_2 \setminus \bigcup_{i=1}^{p+q} V(L_i)$, $|N(v)\cap A_2| \geq \delta - 18\sqrt{\beta}n$.
  Therefore there exists a matching $M'=\{\{a_i,b_i\} : i=1, \dots, |A_2 - \cup_{i=1}^{p+q} V(L_i)|\}$
  which saturates $A_2 - \bigcup_{i=1}^{p+q} V(L_i)$. Define the auxiliary graph $H$ as follows. For every $L_i$ consider vertex $V_{L_i}$ and let
  $$
  V(H) = \{v_{L_i} : i \in [p+q] \} \cup \{e : e \in M' \}.$$
  For
  $e=\{a_i,b_i\}, e'=\{a_j,b_j\}\in M'$, $\{e,e'\}\in E(H)$ if $G[\{a_i,a_j\},\{b_i,b_j\}]= K_{2,2}$ and for $v_{L_i}\in V(H), e=\{a_j,b_j\}\in M'$, $\{v_{L_i}, e\}\in E(H)$ if
  $a_j \in N(y'_i) \cap N(y''_i), b_j \in N(x'_i) \cap N(x''_i)$.
  Then $\delta(H) \geq |H| - 100\sqrt{\beta}n > \frac{|H|}{2}$, $H$ contains a Hamilton cycle, which gives, in turn, a $(|A_2|+|A_2'|)$-ladder in $G$.\\
  {\bf Case (ii)} $|A_2\cup A_2'| = \delta - K$ for some  $0< K \leq \beta + \sqrt{\beta}n < 2\sqrt{\beta}n$.\\
  Note that for every vertex $v\in B_2\cup B_2'$, $K \leq |N(v) \cap (B_2\cup B_2')|<(\gamma +2\sqrt{\beta})n$. Indeed, if $v\in B_2$, then $|N(v) \cap (B_2\cup B_2')|\leq \beta n +|B_2'|$ and if $v\in B_2'$, then  $|N(v) \cap (B_2\cup B_2')|< \gamma n +|B_2'|$ as otherwise we would move $v$ to $A_2'$. Thus, in particular,
  for every $v\in B_2\cup B_2'$, $|N(v)\cap A_2| \geq 9|A_2|/10$. In addition,
  $|A_2\cup A_2'| \leq |B_2\cup B_2'| -2K-2$. 
  
  Let $Q$ be a maximum triple matching in $G[B_2 \cup B_2']$ and $Q'$ be a maximum double matching in $G[B_2 \cup B_2' \setminus V(Q)]$.
  \begin{claim} \label{done with getting K triple matchings}
  If $|Q| + |Q'| \geq K$ and $|Q'| \leq 2$, then $G$ contains $L_{\delta}$.
  \end{claim}
  \begin{proof}
  Without loss of generality, let $|Q| = K - 2$ and $|Q'| = 2$.
  For $i \in [K-2]$, let $x_i$ denote the center of the $i$th star in the triple matching and let $x_i',y_i',z_i'$ be its leaves in $G[B_2 \cup B_2']$.
  Let $x_{K-1},x_K$ be the centers of the stars in the double matching and let $\{x_{K-1}', y_{K-1}'\}, \{x_K',y_K'\}$ denote the sets of leaves.
  Let $S:=\{x_1, \dots, x_K\}$ and note that $|S \cup A_2\cup A_2'|=\delta$. For every $z,w \in B_2 \cup B_2'$, $|N(w) \cap N(z) \cap A_2|\geq 4|A_2|/5$.
  Therefore, for any $i \in [K-2]$, there exists $y_i \in N(y_i') \cap N(x_i') \cap A_2$ and $z_i \in N(z_i') \cap N(x_i') \cap A_2$, i.e $G[\{x_i,x_i',y_i,y_i',z_i,z_i'\}]$ forms 3-ladder 
  and for $j \in \{K-1,K\}$, there exists $y_j \in N(x_j') \cap N(y_j') \cap A_2$, so $G[x_j,x_j',y_j,y_j']$ forms a 2-ladder, say $L_{j}$.
  As similar as we did in the case (i), we define auxiliary graph $H$ such that $V(H)$ consists of $K-2$ 3-ladders, $2$ 2-ladders and 3-ladders wrapping remaining vertices in $A_2' \cup B_2'$ and matchings in $E(A_2, B_2)$ saturating remaining vertices in $A_2$.
  For the definition of $E(H)$, only difference with what did in case (i) is for $v_{L_{K-1}}, v_{L_{K}}$. 
  For $e =\{a,b\} \in M $ ,$j \in \{K-1,K\}$, $\{v_{L_j}, e \} \in E(H)$ if $y_j \sim b $, $a \sim y_j'$. 
  Then $d_H(v_{L_{K-1}}), d_H(v_{L_{K}}) > \frac{|H|}{2}$ and $H - \setminus \{ v_{L_{K-1}} , v_{L_K} \}$ has a Hamilton cycle and then we obtain a Hamilton path in $H$ which has $v_{L_{K-1}}, v_{L_{K}}$ as its two ends.
  It implies that $G$ contains $L_{\delta}$.
  \end{proof}

  \begin{claim} \label{maximum matching in extremal case}
  If $K \geq 3$ then $|Q| \geq K$.
  \end{claim}
  \begin{proof}
  Suppose not. Then every vertex
  $v \in (B_2 \cup B_2') \backslash V(Q)$ has at least $K-2$ neighbors in $V(Q)$. Hence
  \begin{align*}
  |E(V(Q), (B_2\cup B_2') \backslash V(Q))| &\geq (K-2) | (B_2\cup B_2') \backslash V(Q)| \\
                        &> (K-2)(|(B_2\cup B_2')| - 4K) \\ 
                        &= (K-2)(n - \delta - 3K) \\ 
                        &> (K-2)( n -\delta - 6\sqrt{\beta}n) > Kn/5.
  \end{align*}
  Since for every $v\in B_2\cup B_2'$,  $|N(v) \cap (B_2\cup B_2') | < (\gamma +2\sqrt{\beta})n$,
  \begin{align*}
  |E(V(Q), (B_2\cup B_2') \backslash V(Q)) < 4K (\gamma +2\sqrt{\beta})n = 32K\sqrt{\beta}n.
  \end{align*}
  By combining these two inequalities, we obtain
  $$ \beta > (\frac{1}{160})^2,$$
  which is a contradiction to $\beta <\frac{1}{640000}$.
  \end{proof}
  
  By Claim \ref{maximum matching in extremal case} and \ref{done with getting K triple matchings}, we may assume that $K \leq 2$.
  Assume that $K=2$. By Claim \ref{done with getting K triple matchings}, $|Q| + |Q'| \leq 1$ and then every vertex $v \in  (B_2 \cup B_2') \backslash (V(Q) \cup V(Q'))$ has at least $K-1$ neighbors in $V(Q) \cup V(Q')$.
  By the same calculation as we did in Claim \ref{maximum matching in extremal case}, we will run into a contradiction.
  
  Finally, suppose that $K=1$, i.e, $|A_2 \cup A_2'| =\delta - 1$ and $\delta(G[B_2 \cup B_2']) \geq 1$. By Claim \ref{done with getting K triple matchings}, $\Delta(G[B_2 \cup B_2']) \leq 1$,
   which implies that $G[B_2 \cup B_2']$ is a perfect matching, so $|B_2 \cup B_2'|$ is even.
  Hence $G$ is a subgraph of the graph from Example \ref{ex2}. To prove the "Moreover" part, we proceed as follows.
  Let $\{v_1,v_1'\}, \{v_2, v_2'\} \in E(G[B_2 \cup B_2'])$. We have $|N(v_1)\cap N(v_2)\cap A_2|\geq 4|A_2|/5$ and
  $|N(v_1')\cap N(v_2')\cap A_2|\geq 4|A_2|/5$. Thus there is a copy of $C_6$ containing $\{v_1, v_1'\}, \{v_2, v_2'\}$,
  say $C_6 : x_1 v_1 v_1' x_1' v_2 v_2' x_1 $ where $x_1,x_1' \in A_2$.
  Similarly, $G[A_2 \cup A_2' \cup B_2 \cup B_2' \setminus V(C_6)]$ contains $L_{\delta -3}$ such that $z_1 \in N(x_1) \cap B_2, z_1' \in N(v_1) \cap A_2$ and $\{z_1, z_1'\}$ is the first rung of $L_{\delta -3}$.
  Let $n_l \geq 3$. Then the $C_6$ with first $n_l - 3$ rung contains $C_{2n_l}$ and remaining $L_{\delta - 3-(n_l - 3)} = L_{\delta - n_l}$ contains disjoint cycles $C_{2n_1},\dots, C_{2n_{l-1}}$.
  \end{proof}
  \section{Final comments}
  {\bf Proof of Corollary \ref{cor:Dense-case}.} Let $G$ be a graph on $n\geq N(\alpha/8)$ vertices such that $||G||\geq \alpha n^2$. If $\delta(G)\geq n/2$, then $G$ is pancyclic. If $n/2-1>\delta(G)\geq \alpha n/8$, then Theorem \ref{thm:Mainthm} implies that $G$ contains all even cycles of length $4, \dots, 2\delta(G)$.
  If $\delta(G) \geq n/2-1$, then $G$ contains all cycle if lengths $4, \dots, 2\delta(G)-2$ by Theorem \ref{thm:Mainthm} and a cycle on $2\delta(G)$ vertices by Theorem \ref{non extremal main}, Theorem \ref{extremal case} and Theorem \ref{non-extremal two}.
  
  Otherwise, by Mader's theorem, $G$ contains a subgraph $H$ which is $\alpha n/4$-connected. If $|H| \leq 2\delta(H)$ then $H$ is pancyclic by Bondy's theorem and we are done since $|H| \geq \alpha n/4\geq 2\delta(G)$. If $\delta(H) \leq |H|/2$, then by Theorem \ref{thm:Mainthm}, $H$ contains all even cycles $4, \dots, 2\delta(H)$ and $\delta(H)\geq \alpha n/4 \geq \delta(G).$ 
  $\Box$

  \end{document}